 \newtheorem{Theorem}{Theorem}[section]
 \newtheorem{Corollary}{Corollary}[section]
 \newtheorem{Lemma}{Lemma}[section]
 \newtheorem{Proposition}{Proposition}[section]
 \newtheorem{Remark}{Remark}[section]
 \numberwithin{equation}{section}
\begin{document}

\title[A sharp effectiveness result of SOC]
 {A sharp effectiveness result of
 Demailly's strong openness conjecture}

\author{Qi'an Guan}
\address{Qi'an Guan: School of Mathematical Sciences,
Peking University, Beijing, 100871, China.}
\email{guanqian@math.pku.edu.cn}

\thanks{The author was partially supported by by NSFC-11522101 and NSFC-11431013.}

\subjclass[2010]{32D15, 32E10, 32L10, 32U05, 32W05}

\keywords{strong openness conjecture, multiplier ideal sheaf, sharp effectiveness,
plurisubharmonic function, sublevel set}

\date{\today}

\dedicatory{}

\commby{}


\begin{abstract}
In this article,
we establish a sharp effectiveness result of Demailly's strong openness conjecture.
We also establish a sharp effectiveness result related to a conjecture posed by Demailly and Koll\'{a}r.

\end{abstract}

\maketitle

\section{Introduction}

The multiplier ideal sheaf related to a plurisubharmonic function plays an important role in complex geometry and algebraic geometry,
which was widely discussed
(see e.g. \cite{tian87,Nadel90,siu96,DEL00,D-K01,demailly-note2000,D-P03,Laz04,siu05,siu09,demailly2010}).
We recall the definition as follows.

\emph{Let $\varphi$ be a plurisubharmonic function (see \cite{demailly-book,Si,siu74}) on a complex manifold.
It is known that the multiplier ideal sheaf $\mathcal{I}(\varphi)$ was defined as the sheaf of germs of holomorphic functions $f$ such that
$|f|^{2}e^{-\varphi}$ is locally integrable (see \cite{demailly2010}).}

In \cite{demailly2010} (see also \cite{demailly-note2000}),
Demailly posed the strong openness conjecture for multiplier ideal sheaves (SOC for short),
i.e. $$\mathcal{I}(\varphi)=\mathcal{I}_{+}(\varphi):=\cup_{p>1}\mathcal{I}(p\varphi).$$

The two-dimensional case of SOC was proved by Jonsson-Musta\c{t}\u{a} \cite{JM12}.
An important case of SOC so-called the openness conjecture (OC for short) was proved by Berndtsson \cite{berndtsson13} and the two-dimensional case of OC was proved
by Favre-Jonsson \cite{FM05j,FM05v}.

Recently, SOC was proved in \cite{GZopen-c}
(see also \cite{Lempert14,Hiep14}).
After that,
stimulated by the effectiveness result in Berndtsson's solution of the openness conjecture,
an effectiveness result of SOC was established in \cite{GZopen-effect} as continuous work of the solution of SOC.
Note that the effectiveness result of SOC is not sharp,
then it is natural to ask:

\

\emph{Can one establish a sharp effectiveness result of SOC?}

\

In the following section, we give an affirmative answer to the above question.

One of the innovations in the present article is that,
instead of the single minimal $L^{2}$ integral on the whole domain considered in previous articles (e.g. \cite{GZopen-effect,guan-zhou13p,guan-zhou13ap,guan-zhou12}),
we consider the minimal $L^{2}$ integrals on all sublevel sets $\{\varphi<-t\}$, e.g. the function $G(t)$ (details see Section \ref{sec:minimal}).

\subsection{A sharp effectiveness result of Demailly's strong openness conjecture}
\

Let $F$ be a holomorphic function on pseudoconvex domain $D\subset\mathbb{C}^{n}$ (see \cite{demailly-book}) containing the origin $o\in\mathbb{C}^{n}$,
and let $\varphi$ be a negative plurisubharmonic function on $D$.

Recall that $c_{o}^{F}(\varphi):=\sup\{c\geq0:|F|^{2}e^{-2c\varphi}$ is $L^1$ on a neighborhood of $o\}$  is the jumping number (see \cite{JM13}).
Especially, when $F\equiv 1$, $c_{o}^{F}(\varphi)$ will degenerate to
the complex singularity exponent $c_{o}(\varphi)$ (or log canonical threshold) (see \cite{tian87,Sho92,Ko92,D-K01}, etc.).

If $c_{o}^{F}(\varphi)\neq+\infty$, $C_{F,\mathcal{I}_{+}(2c_{o}^{F}(\varphi)\varphi)_{o}}(D):=\inf\{\int_{D}|\tilde{F}|^{2}|(\tilde{F}-F,o)\in \mathcal{I}_{+}(2c_{o}^{F}(\varphi)\varphi)_{o}$
$\&$ $\tilde{F}\in\mathcal{O}(D)\}.$
If $c_{o}^{F}(\varphi)=+\infty$, $C_{F,\mathcal{I}_{+}(2c_{o}^{F}(\varphi)\varphi)_{o}}(D):=\int_{D} |F|^{2}$.

In this section, we establishing a sharp effectiveness result of SOC.

\begin{Theorem}
\label{thm:effect_optimal}
Assume that $\int_{D} |F|^{2}e^{-\varphi}<+\infty$.
Then for any $p>1$ satisfying
$$\theta(p)>\frac{\int_{D} |F|^{2}e^{-\varphi}}{C_{F,\mathcal{I}_{+}(2c_{o}^{F}(\varphi)\varphi)_{o}}(D)},$$
we have
$$(F,o)\in\mathcal{I}(p\varphi)_{o},$$
i.e. $|F|^{2}e^{-\varphi}$ is locally integrable near $o$,
where $\theta(p)=\frac{p}{p-1}$, which is sharp.
\end{Theorem}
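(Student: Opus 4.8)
I would prove the statement by analysing the minimal $L^2$ integral function $G(t)$ on the sublevel sets $\{\varphi<-t\}$ and extracting from it the key convexity/monotonicity property that underlies the sharp bound. Write $c:=c_o^F(\varphi)$; the hypothesis $\int_D|F|^2e^{-\varphi}<+\infty$ already forces $c\ge \tfrac12$, and the content lies in the regime $c<+\infty$. First I would set up $G(t):=\inf\{\int_{\{\varphi<-t\}}|\tilde F|^2 : \tilde F\in\mathcal O(\{\varphi<-t\}),\ (\tilde F-F,o)\in\mathcal I_+(2c\varphi)_o\}$, using the Section~\ref{sec:minimal} machinery. The first technical step is to show $G$ is finite, and more importantly to establish the Ohsawa–Takegoshi-type concavity: $G(-\log r)$ (or $G$ after the substitution $t\mapsto e^{-t}$, i.e. $G(h^{-1}(r))$ with $h(t)=e^{-t}$) is concave in the appropriate variable, together with the boundary behaviour $G(t)\to C_{F,\mathcal I_+(2c\varphi)_o}(D)$ as $t\to 0^+$ (this is essentially the definition of the constant) and $G(t)<+\infty$ for all $t\ge 0$.

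**Turning concavity into the estimate.** Once concavity of $r\mapsto G(-\log r)$ on $(0,1]$ is in hand, the limit slope at $r=0^+$ is $\ge$ the secant slope, which gives a linear-in-$r$ lower bound $G(-\log r)\ge G(0)\cdot r$ — equivalently $G(t)\le (\text{something})e^{-t}$ is the wrong direction; rather I want the inequality controlling $\int_{\{\varphi<-t\}}|F|^2$, or more precisely $\int_D|F|^2 e^{-\varphi}$, from above in terms of $G(0)=C_{F,\mathcal I_+(2c\varphi)_o}(D)$. The mechanism is: the derivative of $G$ controls $\int_{\{-t-\text{small}<\varphi<-t\}}|F|^2$, and integrating against $e^{-\varphi}=\int_0^\infty e^{t}\mathbf 1_{\{\varphi<-t\}}dt + \text{const}$ (layer-cake in $t$) converts the concavity of $G$ into precisely the factor $\theta(p)=p/(p-1)$: the quantity $\int_D|F|^2e^{-\varphi}$ is a weighted integral of $G'$, concavity bounds $G'$ below by $G(0)$ on a shrinking set, and the weight $e^t$ produces the divergent-type integral that, when the scaling exponent matches $c$, yields exactly the ratio in the hypothesis. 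The hypothesis $\theta(p)\int_D|F|^2e^{-\varphi}$... — i.e. $p/(p-1)>\int_D|F|^2e^{-\varphi}/C_{F,\mathcal I_+(2c\varphi)_o}(D)$ — is then exactly the condition forcing the would-be contradiction if $(F,o)\notin\mathcal I(p\varphi)_o$, namely that $|F|^2e^{-p\varphi}$ is non-integrable near $o$ would make $c\le 1/(2p)\cdot(\text{?})$ too small, contradicting the bound derived from $G$.

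**The argument by contradiction.** Concretely: suppose $(F,o)\notin\mathcal I(p\varphi)_o$. Then by strong openness (already proved, available as cited) $(F,o)\notin\mathcal I_+(\varphi)_o$, and one knows $c_o^F(\varphi)\le \tfrac p{2(p-1)}$-type inequality is violated — here I would invoke the relation between $c$, the integrability of $|F|^2e^{-\varphi}$, and the growth of $G$. The cleanest route: by definition of $c$, $(F,o)\in\mathcal I(2c'\varphi)_o$ for $c'<c$ is false beyond $c$; combined with $G(0)=C_{F,\mathcal I_+(2c\varphi)_o}(D)$ and the concavity estimate $\int_D|F|^2e^{-\varphi}\ge \theta(p_0)\,G(0)$ would hold for the critical $p_0$ determined by $c$, and if $p\le p_0$ this contradicts the hypothesis. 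So the hypothesis on $p$ forces $p>p_0$, i.e.\ $(F,o)\in\mathcal I(p\varphi)_o$.

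**Main obstacle.** The hard part will be establishing the concavity of $G$ after the logarithmic change of variables with the \emph{sharp} constant — this requires an Ohsawa–Takegoshi extension argument adapted to each pair of nested sublevel sets $\{\varphi<-t_2\}\subset\{\varphi<-t_1\}$ with control independent of $t_1,t_2$, so that the secant inequality is exact rather than lossy. Equally delicate is matching the scaling exponent at the origin to $c_o^F(\varphi)$ so that the weight $e^{-\varphi}$ integrates to give precisely $p/(p-1)$ and no worse constant; any slack there destroys sharpness. The sharpness claim ("which is sharp") additionally requires exhibiting an example — I would use $F\equiv1$, $\varphi=\max(2c\log|z|,\ \cdot)$-type radial weights on a polydisc where every inequality above is an equality, so $\theta(p)=\int|F|^2e^{-\varphi}/C$ is attained at the threshold and $(F,o)\notin\mathcal I(p\varphi)_o$ there.
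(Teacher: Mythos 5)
Your plan correctly names the paper's ingredients --- minimal $L^{2}$ integrals on sublevel sets, concavity after the substitution $r=e^{-t}$, the layer-cake identity $\int_D|F|^2e^{-\varphi}=\int_{-\infty}^{+\infty}\bigl(\int_{\{\varphi<-t\}}|F|^2\bigr)e^{t}\,dt$, and an Ohsawa--Takegoshi extension step --- but the parametrization you choose for $G$ has a mismatch that breaks the argument. You set $G(t)$ to minimize over $\tilde F$ with $(\tilde F-F,o)\in\mathcal{I}_{+}(2c_o^F(\varphi)\varphi)_{o}$ while using the sublevel sets $\{\varphi<-t\}$ of $\varphi$ itself. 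The extension lemma (Lemma \ref{lem:GZ_sharp}) extends from $\{\varphi<-t_{0}\}$ to $D$ preserving the germ modulo $\mathcal{I}(\varphi)_{o}$, i.e.\ the ideal coming from the same weight $\varphi$ that defines the sublevel sets; it does not control membership in $\mathcal{I}_{+}(2c_o^F(\varphi)\varphi)_{o}$. Consequently the differential inequality behind the concavity (Lemma \ref{lem:C}) and the resulting decay $G(t)\geq e^{-t}G(0)$ are not available for your $G$. Indeed they had better fail: if your $G$ satisfied $G(t)\geq e^{-t}G(0)$ with $G(0)=C_{F,\mathcal{I}_{+}(2c_o^F(\varphi)\varphi)_{o}}(D)>0$, the layer-cake identity would yield $\int_D|F|^2e^{-\varphi}\geq G(0)\int_0^{+\infty}e^{-t}e^{t}\,dt=+\infty$, contradicting the standing hypothesis.

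The paper avoids this by rescaling the weight before invoking the sublevel-set machinery. For each $p>2c_o^F(\varphi)$ it sets $\psi:=p\varphi$ and works with $G_{\psi}(t)=C_{F,\psi}(\{\psi<-t\})$, so the sublevel sets and the multiplier ideal come from the same weight $\psi$; then $G_{\psi}(t)\geq e^{-t}G_{\psi}(0)$ does hold (Proposition \ref{prop:lower_sharp}), and since $\{\psi/p<-t\}=\{\psi<-pt\}$ the layer-cake integrand becomes $e^{-pt}e^{t}$, whose integral over $[0,+\infty)$ is $\frac{1}{p-1}$. Together with the elementary $t<0$ contribution this gives exactly $\int_D|F|^2e^{-\varphi}\geq\frac{p}{p-1}C_{F,p\varphi}(D)\geq\frac{p}{p-1}C_{F,\mathcal{I}_{+}(2c_o^F(\varphi)\varphi)_{o}}(D)$ for $p>2c_o^F(\varphi)$, which extends to $p\geq 2c_o^F(\varphi)$ by a limit and Noetherianity; the theorem then follows by contraposition, with no detour through strong openness as a black box. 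Note also that your closing contradiction argument tests the inequality only at a single ``critical'' $p_{0}$; you need the lower bound for the given $p$ itself (valid for every $p\geq 2c_o^F(\varphi)$), since comparing $\theta(p)$ with $\theta(p_{0})$ alone does not force $p<p_{0}$.
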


When $D$ is the unit disc $\Delta\subseteq\mathbb{C}$, $F\equiv1$ and $\varphi=\frac{2}{p}\log|z|$,
note that $\int_{D}e^{-\varphi}=\int_{\Delta}\frac{1}{|z|^{2/p}}=\frac{\pi}{1-\frac{1}{p}}$,
and $C_{F,\mathcal{I}_{+}(2c_{o}^{F}(\varphi)\varphi)_{o}}(D)=\pi$,
then it is clear that $\frac{\int_{D}|F|^{2}e^{-\varphi}}{C_{F,\mathcal{I}_{+}(2c_{o}^{F}(\varphi)\varphi)_{o}}(D)}=\frac{p}{p-1}$,
which implies that $\theta(p)=\frac{p}{p-1}$ is sharp.

\begin{Remark}
For the case $\theta(p)=(\frac{1}{(p-1)(2p-1)})^{\frac{1}{p}}$,
the effectiveness result of SOC was established in \cite{GZopen-effect},
which implies (a more precise but non-sharp version of)
Berndtsson's effectiveness result of OC (\cite{berndtsson13}, see also \cite{GZopen-effect}).
\end{Remark}

It follows from inequality \ref{equ:20170915a} that Theorem \ref{thm:effect_optimal} degenerates to the following sharp version of Berndtsson's effectiveness result of OC.

\begin{Corollary}
\label{coro:effect_optimal}
Assume that $\int_{D}e^{-\varphi}<+\infty$,
where $\varphi$ is a negative plurisubharmonic function on pseudoconvex domain $D$.
Then
for any $p>1$ satisfying
\begin{equation}
\frac{p}{p-1}>K_{D}(o)\int_{D}e^{-\varphi},
\end{equation}
we have $e^{-p\varphi}$ is locally integrable near $o$,
where $K_D$ is the Bergman kernel on $D$.
\end{Corollary}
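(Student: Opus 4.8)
The plan is to obtain this corollary as the special case $F\equiv 1$ of Theorem~\ref{thm:effect_optimal}. For that choice one has $\int_{D}|F|^{2}e^{-\varphi}=\int_{D}e^{-\varphi}<+\infty$, so the standing hypothesis of Theorem~\ref{thm:effect_optimal} holds, and $c_{o}^{F}(\varphi)=c_{o}(\varphi)$. Two preliminary remarks: since $-\varphi\ge 0$ we have $e^{-\varphi}\ge 1$ on $D$, so $|D|\le\int_{D}e^{-\varphi}<+\infty$ (here $|D|$ is the Lebesgue measure of $D$); hence the constant function $1$ is square-integrable on $D$ and $K_{D}(o)\ge 1/|D|>0$, so $1/K_{D}(o)$ is finite. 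Also $e^{-\varphi}$ is locally integrable near $o$, so $c_{o}(\varphi)\ge\frac12>0$. Granting these, the corollary will follow once we establish
\begin{equation}\label{eq:Bergman-lower}
C_{1,\mathcal{I}_{+}(2c_{o}(\varphi)\varphi)_{o}}(D)\ \ge\ \frac{1}{K_{D}(o)},
\end{equation}
which is the $F\equiv 1$ instance of inequality~\eqref{equ:20170915a}: indeed, if $p>1$ satisfies $\frac{p}{p-1}>K_{D}(o)\int_{D}e^{-\varphi}$, then \eqref{eq:Bergman-lower} gives $\frac{p}{p-1}>K_{D}(o)\int_{D}e^{-\varphi}\ge\frac{\int_{D}e^{-\varphi}}{C_{1,\mathcal{I}_{+}(2c_{o}(\varphi)\varphi)_{o}}(D)}$, so Theorem~\ref{thm:effect_optimal} applies with $F\equiv 1$ and yields $(1,o)\in\mathcal{I}(p\varphi)_{o}$, i.e.\ $e^{-p\varphi}$ is locally integrable near $o$, as desired.

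To prove \eqref{eq:Bergman-lower} I would argue, in the case $c_{o}(\varphi)<+\infty$, that $(1,o)\notin\mathcal{I}_{+}(2c_{o}(\varphi)\varphi)_{o}$. Indeed $\mathcal{I}_{+}(2c_{o}(\varphi)\varphi)_{o}=\bigcup_{q>1}\mathcal{I}(2qc_{o}(\varphi)\varphi)_{o}$, and $(1,o)\in\mathcal{I}(2qc_{o}(\varphi)\varphi)_{o}$ would force $e^{-2qc_{o}(\varphi)\varphi}$ to be locally integrable near $o$; but for $q>1$ one has $qc_{o}(\varphi)>c_{o}(\varphi)$, and $e^{-2c\varphi}$ fails to be locally integrable near $o$ for every $c>c_{o}(\varphi)$, because $-\varphi\ge 0$ makes $c\mapsto e^{-2c\varphi}$ nondecreasing and $c_{o}(\varphi)$ is the supremum of the $c$ for which local integrability holds. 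Thus $\mathcal{I}_{+}(2c_{o}(\varphi)\varphi)_{o}$ is a proper ideal of the local ring $\mathcal{O}_{\mathbb{C}^{n},o}$, hence is contained in its maximal ideal $\mathfrak{m}_{o}$. Consequently every admissible competitor $\tilde{F}\in\mathcal{O}(D)$ in the definition of $C_{1,\mathcal{I}_{+}(2c_{o}(\varphi)\varphi)_{o}}(D)$ satisfies $(\tilde{F}-1,o)\in\mathfrak{m}_{o}$, i.e.\ $\tilde{F}(o)=1$, and therefore
\[
C_{1,\mathcal{I}_{+}(2c_{o}(\varphi)\varphi)_{o}}(D)\ \ge\ \inf\Big\{\textstyle\int_{D}|g|^{2}\ :\ g\in\mathcal{O}(D),\ g(o)=1\Big\}\ =\ \frac{1}{K_{D}(o)},
\]
the last equality being the standard extremal characterization of the Bergman kernel (the infimum is attained by $K_{D}(\cdot,o)/K_{D}(o)$). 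When $c_{o}(\varphi)=+\infty$ the left side of \eqref{eq:Bergman-lower} equals $\int_{D}1=|D|\ge 1/K_{D}(o)$ by the convention in Theorem~\ref{thm:effect_optimal}, so \eqref{eq:Bergman-lower} still holds (and in that case $e^{-p\varphi}$ is locally integrable near $o$ for every $p$, so the corollary is immediate).

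Thus, once Theorem~\ref{thm:effect_optimal} is granted, the only step requiring any argument is \eqref{eq:Bergman-lower}, and this is the elementary local-algebra-plus-Bergman-kernel observation above; there is no real obstacle at the level of the corollary itself. The genuine difficulty is entirely internal to Theorem~\ref{thm:effect_optimal}: establishing the concavity, in the appropriate exponential variable, of the minimal $L^{2}$ integral on the sublevel sets $\{\varphi<-t\}$ carrying the prescription $(\tilde{F}-1,o)\in\mathcal{I}_{+}(2c_{o}(\varphi)\varphi)_{o}$, and then combining it, via the layer-cake decomposition of $\int_{D}e^{-\varphi}$, to produce the sharp estimate $C_{1,\mathcal{I}_{+}(2c_{o}(\varphi)\varphi)_{o}}(D)\le\big(1-\tfrac{1}{2c_{o}(\varphi)}\big)\int_{D}e^{-\varphi}$; that concavity in turn rests on the optimal-constant $L^{2}$ extension theorem.
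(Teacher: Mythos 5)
Your proposal is correct and follows essentially the same route as the paper: specialize Theorem \ref{thm:effect_optimal} to $F\equiv 1$ and invoke the bound $C_{1,\mathcal{I}_{+}(2c_{o}(\varphi)\varphi)_{o}}(D)\geq K_{D}^{-1}(o)$, which is exactly inequality \ref{equ:20170915a} (proved in the paper via Lemma \ref{lem:0} and the Noetherian property of $\mathcal{O}_{o}$). Your proper-ideal/Bergman-extremal derivation of that bound, and your separate treatment of the $c_{o}(\varphi)=+\infty$ case, match the paper's reasoning in substance, just spelled out in more detail.
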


Let $D$ be the unit disc $\Delta\subseteq\mathbb{C}$, and $\varphi=\frac{2}{p}\log|z|$.
Note that $\int_{D}e^{-\varphi}=\int_{\Delta}\frac{1}{|z|^{2/p}}=\frac{\pi}{1-\frac{1}{p}}$,
and $K_{D}(o)=\frac{1}{\pi}$,
then it is clear that $K_{D}(o)\int_{D}e^{-\varphi}=\frac{p}{p-1}$.
Then Corollary \ref{coro:effect_optimal} is sharp.

\subsection{A sharp effectiveness result related to a conjecture posed by Demailly and Koll\'{a}r.}
\

In this section, we present the following sharp effectiveness result related to
a conjecture posed by Demailly and Koll\'{a}r
\begin{Theorem}
\label{thm:lower_optimal}
Let $F$ be a holomorphic function on pseudoconvex domain $D\subset\mathbb{C}^{n}$,
and let $\varphi$ be a negative plurisubharmonic function on $D$.
If $c_{o}^{F}(\varphi)<+\infty$, then
$$\frac{1}{r^{2c_{o}^{F}(\varphi)}}\int_{\{\varphi<\log r\}}|F|^{2}\geq C_{F,\mathcal{I}_{+}(2c_{o}^{F}(\varphi)\varphi)_{o}}(D)>0$$
holds for any $r\in(0,1)$.

Especially, if $C_{F,\mathcal{I}_{+}(2c_{o}^{F}(\varphi)\varphi)_{o}}(D)=+\infty$,
then $\int_{\{\varphi<\log r\}}|F|^{2}=+\infty$ for any $r\in(0,1)$.
\end{Theorem}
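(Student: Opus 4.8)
The plan is to derive the statement from the concavity of the minimal $L^{2}$ integrals over the sublevel sets---the key output of Section~\ref{sec:minimal}---applied to the \emph{critical} weight $2c_{o}^{F}(\varphi)\varphi$ rather than to $\varphi$ itself. Write $c:=c_{o}^{F}(\varphi)$. First I would record two elementary reductions: since $F$ is holomorphic, $c\ge c_{o}(\varphi)\ge 1/\nu(\varphi,o)>0$ by Skoda's lemma and the finiteness of the Lelong number $\nu(\varphi,o)$, so $c\in(0,+\infty)$ under the hypothesis $c<+\infty$; and $c<+\infty$ forces $\varphi(o)=-\infty$ (otherwise $\nu(\varphi,o)=0$ and $c=+\infty$), so every sublevel set used below is an open neighbourhood of $o$.

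Next I would prove the positivity $C:=C_{F,\mathcal I_{+}(2c\varphi)_{o}}(D)>0$. Since $c$ is the jumping number, $(F,o)\notin\mathcal I_{+}(2c\varphi)_{o}=\bigcup_{p>1}\mathcal I(2pc\varphi)_{o}$: membership would give $|F|^{2}e^{-2pc\varphi}\in L^{1}_{\mathrm{loc}}(o)$ for some $p>1$, forcing $c\ge pc>c$. By SOC (\cite{GZopen-c}) the sheaf $\mathcal J:=\mathcal I_{+}(2c\varphi)=\mathcal I(2c\varphi)$ is coherent (Nadel), hence $\mathcal J_{o}$ is finitely generated. If $C=0$, there are $\tilde F_{j}\in\mathcal O(D)$ with $\int_{D}|\tilde F_{j}|^{2}\to 0$ and $(\tilde F_{j}-F,o)\in\mathcal J_{o}$; then $\tilde F_{j}\to 0$ locally uniformly near $o$, so all jets of $\tilde F_{j}$ at $o$ tend to $0$, whence $(F,o)\in\mathcal J_{o}+\mathfrak m_{o}^{k+1}$ for every $k$ (the image of $\mathcal J_{o}$ in each finite-dimensional $\mathcal O_{o}/\mathfrak m_{o}^{k+1}$ is a linear subspace), so $(F,o)\in\bigcap_{k}(\mathcal J_{o}+\mathfrak m_{o}^{k+1})=\mathcal J_{o}$ by Krull's intersection theorem, a contradiction. (If $C=+\infty$ positivity is trivial.)

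For the core argument, set, for $t\ge 0$,
$$G(t):=\inf\Big\{\int_{\{2c\varphi<-t\}}|\tilde F|^{2}:\ \tilde F\in\mathcal O(\{2c\varphi<-t\}),\ (\tilde F-F,o)\in\mathcal I_{+}(2c\varphi)_{o}\Big\}.$$
Since $\varphi<0$ and $c>0$ we have $\{2c\varphi<0\}=D$, so $G(0)=C$; taking $\tilde F=F$ gives $G(t)\le\int_{\{2c\varphi<-t\}}|F|^{2}$, and $G$ is non-increasing, so $0\le G\le C$ when $C<+\infty$. By the concavity result of Section~\ref{sec:minimal}, $r\mapsto G(-\log r)$ is concave on $(0,1]$; together with $G\ge 0$ and $G(0)=C$ this gives $G(-\log r)\ge rC$ for $r\in(0,1]$ (write $r$ as a convex combination of $1$ and $s$ and let $s\to 0^{+}$). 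Now fix $r\in(0,1)$ and set $\rho:=r^{2c}\in(0,1)$; then $\{2c\varphi<-(-\log\rho)\}=\{2c\varphi<2c\log r\}=\{\varphi<\log r\}$, so $G(-\log\rho)$ equals the infimum of $\int_{\{\varphi<\log r\}}|\tilde F|^{2}$ over competitors and hence $G(-\log\rho)\le\int_{\{\varphi<\log r\}}|F|^{2}$. Therefore
$$\int_{\{\varphi<\log r\}}|F|^{2}\ \ge\ G(-\log\rho)\ \ge\ \rho\,C\ =\ r^{2c}\,C_{F,\mathcal I_{+}(2c\varphi)_{o}}(D),$$
which is the asserted inequality, with positive right-hand side by the previous step. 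For the ``Especially'' clause, assume $C=+\infty$; by the Ohsawa--Takegoshi-type $L^{2}$-extension estimate underlying Section~\ref{sec:minimal} (extension from $\{2c\varphi<-t\}$ up to $D$ with a finite constant), $G(t)<+\infty$ for some $t$ would give $G(0)<+\infty$, so $G\equiv+\infty$; since $\tilde F=F$ is always an admissible competitor for $G(-\log\rho)$, this forces $\int_{\{\varphi<\log r\}}|F|^{2}=+\infty$.

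The difficulty is concentrated in Section~\ref{sec:minimal}: proving the concavity of $r\mapsto G(-\log r)$ on $(0,1]$ (with the boundary value $G(0)=C$ and the companion extension estimate), by running the Ohsawa--Takegoshi method uniformly across the family of sublevel sets $\{2c\varphi<-t\}$. The point that yields the sharp exponent $2c_{o}^{F}(\varphi)$ (rather than $1$) is that the sublevel sets are taken for $2c_{o}^{F}(\varphi)\varphi$---the weight at which $(F,o)$ just fails to be a multiplier---so that the substitution $\rho=r^{2c}$ turns the linear bound $G(-\log r)\ge rC$ into the required $r^{2c}$-bound; this is consistent with the sharpness example $\varphi=\frac{2}{p}\log|z|$, where both sides of the asserted inequality equal $\pi$.
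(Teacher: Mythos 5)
Your argument is correct, and the substitution $\rho=r^{2c}$ at the end is exactly the mechanism the paper uses (via $r=e^{-t/(2c_{o}^{F}(\varphi))}$). However, you take a genuinely different route at the critical exponent, and the difference matters for one of the paper's stated goals. You invoke SOC to identify $\mathcal I_{+}(2c\varphi)_{o}=\mathcal I(2c\varphi)_{o}$ so that the $G(t)$-framework (Lemmas~\ref{lem:A}--\ref{lem:C}, Proposition~\ref{prop:lower_sharp}) can be applied \emph{directly} with the critical weight $2c\varphi$ and the ideal $\mathcal I(2c\varphi)_{o}$. This is indeed needed in your setup: the framework requires the ideal to be $\mathcal I(\psi)_{o}$ for the same $\psi$ defining the sublevel sets, and without SOC one cannot rule out $(F,o)\in\mathcal I(2c\varphi)_{o}$, which would collapse $C_{F,2c\varphi}(D)$ to $0$ and trivialize the bound. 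The paper instead works with the strictly supercritical weights $p\varphi$, $p>2c_{o}^{F}(\varphi)$, where $\mathcal I(p\varphi)_{o}=\mathcal I_{+}(2c_{o}^{F}(\varphi)\varphi)_{o}$ holds by Noetherianity alone (no SOC), applies Proposition~\ref{prop:lower_sharp} for each such $p$, and then passes to the limit $p\to 2c_{o}^{F}(\varphi)+0$ using monotone convergence on the sublevel sets and the identity $\{2c_{o}^{F}(\varphi)\varphi<-t\}=\bigcup_{t'>t}\{2c_{o}^{F}(\varphi)\varphi\leq -t'\}$. That limiting device is exactly what lets the paper state, after Corollary~\ref{coro:lower_optimal}, that its proof of the Demailly--Koll\'ar conjecture does \emph{not} depend on the truth of OC, which is presented as a contribution; your proof sacrifices that independence.

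Two smaller points. First, citing the Appendix's concavity (Proposition~\ref{prop:logconcave}) is heavier than needed: Proposition~\ref{prop:lower_sharp} is precisely the single inequality $G(t)\geq e^{-t}G(0)$ you use, and it is what the paper's proof quotes. Second, your Krull-intersection argument for positivity is fine, but it re-derives what Lemma~\ref{lem:0} already packages via the closedness of coherent-sheaf sections under compact convergence; since $\mathcal I_{+}(2c\varphi)_{o}=\mathcal I(p_{0}\varphi)_{o}$ for some $p_{0}>2c$ by Noetherianity, the paper can invoke Lemma~\ref{lem:0} with $\mathcal I(p_{0}\varphi)_{o}$ directly and does not need SOC even here. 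Your ``Especially'' clause matches the paper's Step~2.
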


Let $D=\Delta\subset\mathbb{C}$ be the unit disc, and let $\varphi=\log|z|$ and $F\equiv 1$.
It is clear that $c_{o}^{F}(\varphi)=1$, $\int_{\{\varphi<\log r\}}|F|^{2}=\pi r^{2}$, $C_{F,\mathcal{I}_{+}(2c_{o}^{F}(\varphi)\varphi)_{o}}(D)=\pi$,
which imply the sharpness of Theorem \ref{thm:lower_optimal}.

When $F\equiv1$, Theorem \ref{thm:lower_optimal} degenerates to

\begin{Corollary}
\label{coro:lower_optimal}
Let $\varphi$ be a negative plurisubharmonic function on pseudoconvex domain $D\subset\mathbb{C}^{n}$.
If $c_{o}^{F}(\varphi)<+\infty$, then
\begin{equation}
\label{equ:effectDK}
\frac{1}{r^{2c_{o}(\varphi)}}\int_{\{\varphi<\log r\}}1\geq K_{D}^{-1}(o)
\end{equation}
holds for any $r\in(0,1)$.
\end{Corollary}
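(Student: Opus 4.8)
The plan is to deduce Corollary~\ref{coro:lower_optimal} from Theorem~\ref{thm:lower_optimal} by specializing to $F\equiv 1$ and identifying the resulting constant with the reciprocal of the Bergman kernel. When $F\equiv 1$ the jumping number $c_{o}^{F}(\varphi)$ is, by definition, the complex singularity exponent $c_{o}(\varphi)$, so Theorem~\ref{thm:lower_optimal} already gives, for every $r\in(0,1)$,
$$\frac{1}{r^{2c_{o}(\varphi)}}\int_{\{\varphi<\log r\}}1\ \geq\ C_{1,\mathcal{I}_{+}(2c_{o}(\varphi)\varphi)_{o}}(D),$$
and it therefore suffices to prove the single inequality $C_{1,\mathcal{I}_{+}(2c_{o}(\varphi)\varphi)_{o}}(D)\geq K_{D}^{-1}(o)$.

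The first step is to observe that the constant germ is not a multiplier. Since Lelong numbers of plurisubharmonic functions are finite, Skoda's integrability theorem gives $c_{o}(\varphi)>0$, and $c_{o}(\varphi)<+\infty$ by hypothesis; hence for every $p>1$ we have $2pc_{o}(\varphi)>2c_{o}(\varphi)$, so $e^{-2pc_{o}(\varphi)\varphi}$ is not integrable near $o$ and $(1,o)\notin\mathcal{I}(2pc_{o}(\varphi)\varphi)_{o}$. Passing to the union over $p>1$ we get $(1,o)\notin\mathcal{I}_{+}(2c_{o}(\varphi)\varphi)_{o}$, so $\mathcal{I}_{+}(2c_{o}(\varphi)\varphi)_{o}$ is a proper ideal of the local ring $\mathcal{O}_{o}$ and is thus contained in its maximal ideal $\mathfrak{m}_{o}=\{g\in\mathcal{O}_{o}:g(o)=0\}$. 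Consequently, every $\tilde{F}\in\mathcal{O}(D)$ admissible in the infimum defining $C_{1,\mathcal{I}_{+}(2c_{o}(\varphi)\varphi)_{o}}(D)$ satisfies $\tilde{F}-1\in\mathfrak{m}_{o}$, i.e. $\tilde{F}(o)=1$. Therefore
$$C_{1,\mathcal{I}_{+}(2c_{o}(\varphi)\varphi)_{o}}(D)\ \geq\ \inf\Big\{\int_{D}|\tilde{F}|^{2}\ :\ \tilde{F}\in\mathcal{O}(D),\ \tilde{F}(o)=1\Big\}\ =\ K_{D}^{-1}(o),$$
the last equality being the extremal characterization of the Bergman kernel; combined with the previous display this proves \eqref{equ:effectDK}.

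Since the argument is a direct specialization, I do not expect a serious obstacle. The only point genuinely requiring care is the strict positivity $c_{o}(\varphi)>0$, which makes the strict inequality $2pc_{o}(\varphi)>2c_{o}(\varphi)$ available for $p>1$; and the degenerate case $K_{D}(o)=0$ is harmless, since then $K_{D}^{-1}(o)=+\infty$ and the displayed inequality forces $C_{1,\mathcal{I}_{+}(2c_{o}(\varphi)\varphi)_{o}}(D)=+\infty$, consistently with the second assertion of Theorem~\ref{thm:lower_optimal}.
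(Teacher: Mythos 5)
Your proof is correct and follows essentially the same route as the paper: specialize Theorem \ref{thm:lower_optimal} to $F\equiv 1$, then invoke the bound $C_{1,\mathcal{I}_{+}(2c_{o}(\varphi)\varphi)_{o}}(D)\geq K_{D}^{-1}(o)$ coming from the Bergman-kernel extremal characterization (the paper's Lemma \ref{lem:0} and the inequality derived just after it, which has a small typo writing $K_{D}(o)$ where $K_{D}^{-1}(o)$ is meant). The only cosmetic difference is in how the ideal is seen to be proper: you argue directly from Skoda's bound $c_{o}(\varphi)>0$ that $(1,o)\notin\mathcal{I}_{+}(2c_{o}(\varphi)\varphi)_{o}$, while the paper first uses Noetherianity to replace $\mathcal{I}_{+}(2c_{o}(\varphi)\varphi)_{o}$ by some $\mathcal{I}(p_{0}\varphi)_{o}$ with $p_{0}>2c_{o}(\varphi)$ before applying Lemma \ref{lem:0}.
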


Let $D=\Delta\subset\mathbb{C}$ be the unit disc, and let $\varphi=\log|z|$.
It is clear that $c_{o}(\varphi)=1$, $\int_{\{\varphi<\log r\}}1=\pi r^{2}$, $K_{D}^{-1}(o)=\pi$,
which imply the sharpness of Corollary \ref{coro:lower_optimal}.

In \cite{D-K01} (see also \cite{JM13}), Demailly and Koll\'{a}r conjectured that
$$
\liminf_{r\to0}\frac{1}{r^{2c_{o}(\varphi)}}\int_{\{\varphi<\log r\}}1>0.
$$

Depending on the truth of OC,
the above conjecture was proved in \cite{GZopen-effect} (the two-dimensional case was proved by Favre-Jonsson \cite{FM05j}).
Note that the proof of Theorem \ref{thm:lower_optimal} doesn't depend on the truth of OC,
then we obtain a new approach to the above conjecture
with sharp effectiveness (inequality \ref{equ:effectDK}).

\section{Preparations}\label{sec:preparations}

In this section,
we will do some preparations.

\subsection{Some properties of $C_{f,\varphi}(D)$}\label{sec:minimal}
\

Let $D\subset\mathbb{C}^{n}$ be a pseudoconvex domain.
Let $f$ be a holomorphic function near $o$,
and let $I\subset\mathcal{O}_{o}$ be an ideal.
$C_{f,I}(D)$ denotes $\inf\{\int_{D}|\tilde{f}|^{2}|(\tilde{f}-f,o)\in I{\,}\&{\,}\tilde{f}\in\mathcal{O}(D)\}$
as a generalized version of $C_{F,\mathcal{I}_{+}(2c_{o}^{F}(\varphi)\varphi)_o}(D)$.

If there is no holomorphic function $\tilde{f}$ satisfying both $(\tilde{f}-f,o)\in I$ and $\tilde{f}\in\mathcal{O}(D)$,
then we set $C_{f,I}(D)=-\infty$.
Especially, if $I=\mathcal{I}(\varphi)_{o}$,
then $C_{f,\varphi}(D)$ denotes $C_{f,I}(D)$.

In this section, we will recall and present some properties related to $C_{f,\varphi}(D)$.

\begin{Lemma}
\label{lem:0}
$(f,o)\not\in\mathcal{I}(\varphi)_{o}\Leftrightarrow C_{f,\varphi}(D)\neq0$ (maybe $-\infty$ or $+\infty$).
Especially, if $f\equiv1$ and $\mathcal{I}(\varphi)_{o}\neq\mathcal{O}_{o}$, then $C_{1,\varphi}(D)\geq K_{D}^{-1}(o)$.
\end{Lemma}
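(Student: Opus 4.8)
The plan is to prove the two assertions of Lemma \ref{lem:0} separately, starting with the equivalence and then deducing the Bergman-kernel bound as a special case.

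\textbf{The equivalence.} First I would establish the direction $(f,o)\in\mathcal{I}(\varphi)_o \Leftrightarrow C_{f,\varphi}(D)=0$. If $(f,o)\in\mathcal{I}(\varphi)_o$, then the germ $f$ itself is admissible as $\tilde f$ in the definition of $C_{f,\varphi}(D)$ (taking $\tilde f - f = 0 \in \mathcal{I}(\varphi)_o$), provided $f$ extends holomorphically to all of $D$; more carefully, one uses that $0$ is an admissible competitor only after arranging a global extension, so the cleanest argument is: since $(f,o)\in\mathcal{I}(\varphi)_o$, the difference $\tilde f - f$ can be taken to vanish to high order — in fact one can multiply by a cutoff and solve a $\bar\partial$-equation, or simply note that $C_{f,\varphi}(D)$ is the infimum of $\int_D |\tilde f|^2$ over $\tilde f$ with $(\tilde f - f,o)\in\mathcal{I}(\varphi)_o$; here $\tilde f \equiv 0$ is allowed because $(0 - f, o) = -(f,o)\in\mathcal{I}(\varphi)_o$, giving $C_{f,\varphi}(D)\le \int_D 0 = 0$, hence $C_{f,\varphi}(D)=0$. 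Conversely, if $(f,o)\notin\mathcal{I}(\varphi)_o$, then no admissible $\tilde f$ can vanish near $o$ in the ideal-theoretic sense forcing $\int_D|\tilde f|^2=0$; indeed if $\int_D|\tilde f|^2 = 0$ then $\tilde f\equiv 0$, so $(0-f,o)\in\mathcal{I}(\varphi)_o$, i.e. $(f,o)\in\mathcal{I}(\varphi)_o$, a contradiction. So every admissible $\tilde f$ has $\int_D|\tilde f|^2>0$, whence $C_{f,\varphi}(D)\ne 0$ (it is $>0$, or $+\infty$, or $-\infty$ if no competitor exists at all).

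\textbf{The Bergman-kernel bound.} Now suppose $f\equiv 1$ and $\mathcal{I}(\varphi)_o\ne\mathcal{O}_o$. By the equivalence just proved, $C_{1,\varphi}(D)\ne 0$, and since the constant function $\tilde f \equiv 1 \in \mathcal{O}(D)$ is a valid competitor (with $(\tilde f - 1, o) = 0 \in \mathcal{I}(\varphi)_o$), we have $C_{1,\varphi}(D) \le \int_D 1$, which is finite if $D$ is bounded; in any case $C_{1,\varphi}(D)$ is a genuine positive real number (not $-\infty$, not $0$). The key point is the extremal characterization of the Bergman kernel: $K_D(o) = \sup\{|g(o)|^2 : g\in\mathcal{O}(D),\ \int_D|g|^2\le 1\}$, equivalently $K_D(o)^{-1} = \inf\{\int_D|g|^2 : g\in\mathcal{O}(D),\ g(o)=1\}$. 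So I would argue: any competitor $\tilde f$ in the definition of $C_{1,\varphi}(D)$ satisfies $(\tilde f - 1, o)\in\mathcal{I}(\varphi)_o$; since $\mathcal{I}(\varphi)_o\subsetneq\mathcal{O}_o$ is a proper ideal, it is contained in the maximal ideal $\mathfrak{m}_o$, so $\tilde f - 1$ vanishes at $o$, i.e. $\tilde f(o) = 1$. Therefore every competitor for $C_{1,\varphi}(D)$ is also a competitor for the Bergman infimum $K_D(o)^{-1}$, giving $C_{1,\varphi}(D) \ge K_D^{-1}(o)$.

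\textbf{Main obstacle.} The only subtle point I anticipate is the justification that $\mathcal{I}(\varphi)_o \subseteq \mathfrak{m}_o$ when $\mathcal{I}(\varphi)_o \ne \mathcal{O}_o$: this is because $\mathcal{O}_o$ is a local ring with unique maximal ideal $\mathfrak{m}_o$, so any proper ideal sits inside $\mathfrak{m}_o$ — hence any $h$ with $(h,o)\in\mathcal{I}(\varphi)_o$ has $h(o)=0$. A second point worth a sentence is ensuring $\tilde f\equiv 0$ is genuinely admissible in the first direction, which is immediate from the definition since the condition is on $\tilde f - f$, not on $\tilde f$. Everything else is a direct unwinding of definitions together with the standard extremal property of the Bergman kernel, so I expect the proof to be short.
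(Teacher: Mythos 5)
Your argument for the forward implication $(f,o)\in\mathcal{I}(\varphi)_{o}\Rightarrow C_{f,\varphi}(D)=0$ (take $\tilde f\equiv 0$) and your argument for the Bergman-kernel bound (properness of $\mathcal{I}(\varphi)_o$ in the local ring $\mathcal{O}_o$ forces $\tilde f(o)=1$, then invoke the extremal characterization of $K_D(o)$) are both correct and match the paper.

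However, your argument for the converse direction has a genuine gap. You show that every admissible competitor $\tilde f$ has $\int_D|\tilde f|^2>0$, and then conclude ``whence $C_{f,\varphi}(D)\neq 0$.'' This inference is invalid: an infimum over a set of strictly positive numbers can still equal $0$. Ruling out a competitor with zero norm does not rule out competitors with arbitrarily small norm. What is actually needed is a compactness argument: suppose for contradiction that $C_{f,\varphi}(D)=0$, take a minimizing sequence $\{\tilde f_j\}$ of admissible competitors with $\int_D|\tilde f_j|^2\to 0$; by Montel's theorem (the $L^2$ bound gives local uniform bounds on $|\tilde f_j|$) a subsequence converges locally uniformly to $0$, hence $\tilde f_{j_k}-f\to -f$ locally uniformly near $o$; then one invokes the closedness of the space of sections of a coherent analytic sheaf under the topology of locally uniform convergence (here applied to the ideal sheaf $\mathcal{I}(\varphi)$) to conclude that $(-f,o)\in\mathcal{I}(\varphi)_o$, i.e. $(f,o)\in\mathcal{I}(\varphi)_o$, contradicting the hypothesis. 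This closedness-of-coherent-sections step is the essential ingredient your proposal omits, and it is exactly what the paper uses.
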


Note that $C_{1,\mathcal{I}_{+}(2c_{o}(\varphi)\varphi)_{o}}(D)=C_{1,\mathcal{I}(p_{0}\varphi)_{o}}(D)$
for some $p_{0}>2c_{o}(\varphi)$ (Noetherian of $\mathcal{O}_{o}$),
then it follows from $C_{1,\mathcal{I}(p_{0}\varphi)_{o}}(D)\geq K_{D}(o)$ (Lemma \ref{lem:0}) that
\begin{equation}
\label{equ:20170915a}
C_{1,\mathcal{I}_{+}(2c_{o}(\varphi)\varphi)_{o}}(D)\geq K_{D}(o).
\end{equation}

\begin{proof}(Proof of Lemma \ref{lem:0})
It is clear that $(f,o)\in\mathcal{I}(\varphi)_{o}\Rightarrow C_{f,\varphi}(D)=0$.

Firstly, we prove that $(f,o)\not\in\mathcal{I}(\varphi)_{o}\Rightarrow C_{f,\varphi}(D)\neq 0$ (maybe $-\infty$ or $+\infty$).
We prove it by contradiction:
if not, then there exists holomorphic functions $\{\tilde{f}_{j}\}_{j\in\mathbb{N}^{+}}$ on $D$
such that $\lim_{j\to+\infty}\int_{D}|\tilde{f}_{j}|^{2}=0$ and $(f_{j}-F,o)\in\mathcal{I}(\varphi)_{o}$ for any $j$,
which implies that there exists a subsequence of $\{\tilde{f}_{j}\}_{j\in\mathbb{N}^{+}}$
denoted by $\{\tilde{f}_{j_{k}}\}_{k\in\mathbb{N}^{+}}$ compactly convergent to $0$.
It is clear that $\tilde{f}_{j_{k}}-f$ is compactly convergent to $0-f=f$ near $o$.
It follows from the closedness of the sections of coherent analytic sheaves under the topology of compact convergence (see \cite{G-R})
that $(f,o)\in\mathcal{I}(\varphi)_{o}$,
which contradicts $(f,o)\not\in\mathcal{I}(\varphi)_{o}$. Then we obtain $(f,o)\not\in\mathcal{I}(\varphi)_{o}\Rightarrow C_{f,\varphi}(D)>0$ (maybe $+\infty$).

Secondly, we prove $C_{1,\varphi}(D)\geq K_{D}^{-1}(o)$.
Note that $(\tilde{f}-f,o)\in\mathcal{I}(\varphi)_{o}\neq\mathcal{O}_{o}$ implies $(\tilde{f}-f)(o)=0$ i.e. $\tilde{f}(o)=1$,
then we have $\int_{D}|\tilde{f}|^{2}\geq K_{D}^{-1}(o)$,
which implies $C_{1,\varphi}(D)\geq K_{D}^{-1}(o)$.
Lemma \ref{lem:0} has thus been proved.
\end{proof}

\begin{Lemma}
\label{lem:A}
Let $\varphi$ a negative plurisubharmonic function on $D$,
and let $F$ be a holomorphic function on $\{\varphi<-t\}$.
Assume that $C_{F,\varphi}(\{\varphi<-t\})<+\infty$.
Then there exists a unique holomorphic function $F_{t}$ on
$\{\varphi<-t\}$ satisfying $(F_{t}-F,o)\in\mathcal{I}(\varphi)_{o}$ and $\int_{\{\varphi<-t\}}|F_{t}|^{2}=C_{F,\varphi}(\{\varphi<-t\})$.
Furthermore,
for any holomorphic function $\hat{F}$ on $\{\varphi<-t\}$ satisfying $(\hat{F}-F,o)\in\mathcal{I}(\varphi)_{o}$ and
$\int_{\{\varphi<-t\}}|\hat{F}|^{2}<+\infty$,
we have the following equality
\begin{equation}
\label{equ:20170913e}
\int_{\{\varphi<-t\}}|F_{t}|^{2}+\int_{\{\varphi<-t\}}|\hat{F}-F_{t}|^{2}=\int_{\{\varphi<-t\}}|\hat{F}|^{2}.
\end{equation}
\end{Lemma}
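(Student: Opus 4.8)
The plan is to recognize Lemma \ref{lem:A} as the standard "existence, uniqueness, and orthogonality" package for the minimal $L^2$ extension/modification problem, and to prove it by a Hilbert-space argument in the space
$$\mathcal{H}_t:=\Big\{\tilde F\in\mathcal{O}(\{\varphi<-t\}):\int_{\{\varphi<-t\}}|\tilde F|^2<+\infty,\ (\tilde F-F,o)\in\mathcal{I}(\varphi)_o\Big\}.$$
First I would observe that $\mathcal{H}_t$ is a nonempty (by the hypothesis $C_{F,\varphi}(\{\varphi<-t\})<+\infty$, there are admissible competitors with finite integral) closed affine subspace of the Bergman space $A^2(\{\varphi<-t\})$: closedness of the ideal-membership constraint under $L^2$-convergence follows exactly as in the proof of Lemma \ref{lem:0}, using that $L^2$-convergence on a domain forces local uniform convergence for holomorphic functions together with the closedness of sections of the coherent sheaf $\mathcal{I}(\varphi)$ under compact convergence (cf.\ \cite{G-R}). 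Fixing one competitor $\hat F_0\in\mathcal{H}_t$, the set $\mathcal{H}_t-\hat F_0$ is a closed \emph{linear} subspace of $A^2(\{\varphi<-t\})$ (it equals the space of $L^2$ holomorphic functions whose germ at $o$ lies in $\mathcal{I}(\varphi)_o$, since the difference of two admissible modifications has germ in $\mathcal{I}(\varphi)_o$).

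Next I would extract $F_t$ as the unique element of minimal norm in $\mathcal{H}_t$: equivalently, $F_t-\hat F_0$ is the orthogonal projection of $-\hat F_0$ onto the closed subspace $V:=\mathcal{H}_t-\hat F_0\subseteq A^2(\{\varphi<-t\})$, which exists and is unique by the Hilbert projection theorem. Then $\int|F_t|^2=C_{F,\varphi}(\{\varphi<-t\})$ by definition of the infimum (the infimum is attained because we minimized the norm over a closed set containing a norm-bounded minimizing sequence, which has a weakly convergent subsequence with limit again in $\mathcal{H}_t$; alternatively just quote the projection theorem). Uniqueness of $F_t$ is immediate from uniqueness of the metric projection.

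Finally, for the Pythagorean identity \eqref{equ:20170913e}: given any $\hat F\in\mathcal{H}_t$, both $\hat F-F_t$ and $F_t-(\text{any competitor})$ behave well, and the key point is that $\hat F-F_t\in V$, i.e.\ $(\hat F-F_t,o)\in\mathcal{I}(\varphi)_o$, while $F_t$ is the point of $\mathcal{H}_t$ nearest the origin, so $F_t\perp V$ in $A^2(\{\varphi<-t\})$; in particular $\langle F_t,\hat F-F_t\rangle=0$. Expanding $\int|\hat F|^2=\int|F_t+(\hat F-F_t)|^2=\int|F_t|^2+2\,\mathrm{Re}\langle F_t,\hat F-F_t\rangle+\int|\hat F-F_t|^2$ then gives \eqref{equ:20170913e}. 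To see $F_t\perp V$: for any $v\in V$ and any $\varepsilon\in\mathbb{C}$ we have $F_t+\varepsilon v\in\mathcal{H}_t$, so $\int|F_t+\varepsilon v|^2\ge\int|F_t|^2$ for all $\varepsilon$, and the first-variation argument (differentiate in $\varepsilon$ at $0$, or complete the square) forces $\langle F_t,v\rangle=0$. I expect the only genuinely delicate step is verifying that $\mathcal{H}_t$ is closed in $A^2(\{\varphi<-t\})$ — concretely, that an $L^2$-limit of modifications with germ-difference in $\mathcal{I}(\varphi)_o$ again has this property — but this is handled verbatim by the coherent-sheaf closedness argument already used for Lemma \ref{lem:0}; everything after that is the routine Hilbert-space projection theorem applied to this closed affine subspace.
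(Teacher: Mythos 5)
Your proposal is correct and follows essentially the same route as the paper: the paper's argument (minimizing sequence plus Montel extraction and Levi's theorem for existence, the parallelogram law for uniqueness, the first-variation computation $\mathrm{Re}\int F_t\bar f=0$ for the Pythagorean identity) is exactly the by-hand proof of the Hilbert projection theorem that you invoke abstractly, with the coherent-sheaf closedness providing the required closedness of the affine subspace in both treatments. The only cosmetic difference is that you package the three steps under one appeal to the projection theorem while the paper spells each out separately.
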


\begin{proof}
Firstly, we prove the existence of $F_{t}$.
As $C_{F,\varphi}(\{\varphi<-t\})<+\infty$
then there exists holomorphic functions $\{f_{j}\}_{j\in\mathbb{N}^{+}}$ on $\{\varphi<-t\}$ such that $\int_{D}|f_{j}|^{2}\to C_{F,\varphi}(\{\varphi<-t\})$,
and $(f_{j}-F,o)\in\mathcal{I}(\varphi)_{o}$.
Then there exists a subsequence of $\{f_{j}\}$ compact convergence to a holomorphic function $f$ on $\{\varphi<-t\}$
satisfying $\int_{K}|f|^{2}\leq C_{F,\varphi}(\{\varphi<-t\})$ for any compact set $K\subset\{\varphi<-t\}$,
which implies $ \int_{D}|f|^{2}\leq C_{F,\varphi}(\{\varphi<-t\})$ by Levi's Theorem.
Note that
the closedness of the sections of coherent analytic sheaves under the topology of compact convergence (see \cite{G-R})
implies that $(f-F,o)\in\mathcal{I}(\varphi)_{o}$,
then we obtain the existence of $F_{t}(=f)$.

Secondly, we prove the uniqueness of $F_{t}$ by contradiction:
if not, there exist two different holomorphic functions $f_{1}$ and $f_{2}$ on on $\{\varphi<-t\}$
satisfying $\int_{\{\varphi<-t\}}|f_{1}|^{2}=\int_{\{\varphi<-t\}}|f_{2}|^{2}=C_{F,\varphi}(\{\varphi<-t\})$,
$(f_{1}-F,o)\in\mathcal{I}(\varphi)_{o}$ and $(f_{2}-F,o)\in\mathcal{I}(\varphi)_{o}$.
Note that
$\int_{\{\varphi<-t\}}|\frac{f_{1}+f_{2}}{2}|^{2}+\int_{\{\varphi<-t\}}|\frac{f_{1}-f_{2}}{2}|^{2}
=\frac{\int_{\{\varphi<-t\}}|f_{1}|^{2}+\int_{\{\varphi<-t\}}|f_{2}|^{2}}{2}=C_{F,\varphi}(\{\varphi<-t\})$,
then we obtain that $\int_{\{\varphi<-t\}}|\frac{f_{1}+f_{2}}{2}|^{2}<C_{F,\varphi}(\{\varphi<-t\})$,
and $(\frac{f_{1}+f_{2}}{2}-F,o)\in\mathcal{I}(\varphi)_{o}$, which contradicts the definition of $C_{F,\varphi}(\{\varphi<-t\})$.

Finally, we prove equality \ref{equ:20170913e}.
For any holomorphic $f$ on $\{\varphi<-t\}$ satisfying $\int_{\{\varphi<-t\}}|f|^{2}<+\infty$ and $(f,o)\in \mathcal{I}(\varphi)_{o}$,
it is clear that
for any complex number $\alpha$,
$F_{t}+\alpha f$ satisfying $((F_{t}+\alpha f)-F,o)\in \mathcal{I}(\varphi)_{o}$,
and $\int_{\{\varphi<-t\}}|F_{t}|^{2}<\int_{\{\varphi<-t\}}|F_{t}+\alpha f|^{2}<+\infty$.
Note that $\int_{\{\varphi<-t\}}|F_{t}+\alpha f|^{2}-\int_{\{\varphi<-t\}}|F_{t}|^{2}>0$ implies $\Re\int_{\{\varphi<-t\}}F_{t}\bar{f}=0$ by considering $\alpha\to0$,
then we obtain $\int_{\{\varphi<-t\}}|F_{t}+f|^{2}=\int_{\{\varphi<-t\}}|F_{t}|^{2}+\int|f|^{2}$.
Choosing $f=\hat{F}-F_{t}$, we obtain equality \ref{equ:20170913e}.

\end{proof}

Let $F$ be a holomorphic function on $D$.
$G(t)$ denotes $C_{F,\varphi}(\{\varphi<-t\})$.
In the following part of this section,
we will consider the properties of $G(t)$.
The following Lemma will be used to prove Proposition \ref{prop:lower_sharp}.

\begin{Lemma}
\label{lem:B}
Assume that $G(0)<+\infty$.
Then $G(t)$ is decreasing with respect to $t\in[0,+\infty)$,
such that
$\lim_{t\to t_{0}+0}G(t)=G(t_{0})$ $(t_{0}\in[0,+\infty))$,
$\lim_{t\to t_{0}-0}G(t)\geq G(t_{0})$ $(t_{0}\in(0,+\infty))$,
and $\lim_{t\to +\infty}G(t)=0$, where $t_{0}\in[0,+\infty)$.
Especially $G(t)$ is lower semi-continuous on $[0,+\infty)$.
\end{Lemma}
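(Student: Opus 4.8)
The plan is to establish the four claimed properties of $G(t)=C_{F,\varphi}(\{\varphi<-t\})$ one at a time, using as the main engine the orthogonality relation \ref{equ:20170913e} from Lemma \ref{lem:A} together with the existence and uniqueness of the extremal function $F_t$.

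First I would prove monotonicity. If $t_1 \le t_2$, then $\{\varphi<-t_2\}\subseteq\{\varphi<-t_1\}$, so any competitor $\tilde f$ for $C_{F,\varphi}(\{\varphi<-t_1\})$ restricts to a competitor on the smaller set with no larger $L^2$ norm; hence $G(t_2)\le G(t_1)$. In particular $G(t)\le G(0)<+\infty$ for all $t$, so Lemma \ref{lem:A} applies on every sublevel set and the minimizers $F_t$ exist and are unique. Next, for right-continuity at $t_0$: by monotonicity $L:=\lim_{t\to t_0+}G(t)$ exists and $L\le G(t_0)$. For the reverse inequality, note $\{\varphi<-t_0\}=\bigcup_{t>t_0}\{\varphi<-t\}$ is an increasing union; take a sequence $t_j\downarrow t_0$ and the minimizers $F_{t_j}$ on $\{\varphi<-t_j\}$, which have uniformly bounded $L^2$ norm $G(t_j)\le G(t_0)$. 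By a normal-families / diagonal argument (compact convergence on $\{\varphi<-t_0\}$, plus Fatou or Levi to control the norm, plus closedness of coherent sheaf sections to keep the congruence $(\cdot-F,o)\in\mathcal I(\varphi)_o$ — exactly the package used in Lemma \ref{lem:A}) I extract a holomorphic limit $\hat F$ on $\{\varphi<-t_0\}$ with $\int_{\{\varphi<-t_0\}}|\hat F|^2\le L$ and $(\hat F-F,o)\in\mathcal I(\varphi)_o$; this forces $G(t_0)\le L$, hence equality. The lower bound $\lim_{t\to t_0-}G(t)\ge G(t_0)$ is immediate from monotonicity (the left limit is a sup of values $\ge G(t_0)$), and together with right-continuity this gives lower semicontinuity on $[0,+\infty)$.

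The remaining point is $\lim_{t\to+\infty}G(t)=0$. Since $G$ is decreasing and nonnegative the limit $\ell\ge 0$ exists; I must show $\ell=0$. The key is that $\bigcap_{t>0}\{\varphi<-t\}=\{\varphi=-\infty\}$ has Lebesgue measure zero (a standard fact for plurisubharmonic $\varphi\not\equiv-\infty$), so for the \emph{fixed} minimizer $F_0$ on $\{\varphi<0\}$ (which has finite norm $G(0)$) we get, by dominated convergence, $\int_{\{\varphi<-t\}}|F_0|^2\to\int_{\{\varphi=-\infty\}}|F_0|^2=0$ as $t\to+\infty$. Since $F_0$ restricted to $\{\varphi<-t\}$ is an admissible competitor for $G(t)$ (it still satisfies the congruence at $o$), we obtain $G(t)\le\int_{\{\varphi<-t\}}|F_0|^2\to 0$, hence $\ell=0$.

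I expect the main obstacle to be the right-continuity step: one has to run the normal-families extraction on the \emph{increasing} union $\{\varphi<-t_0\}=\bigcup_{t>t_0}\{\varphi<-t\}$, being careful that each $F_{t_j}$ is only defined on its own domain $\{\varphi<-t_j\}$, so the compact convergence and the passage to the limit norm must be organized exhaustively over compact subsets of $\{\varphi<-t_0\}$ (Levi's theorem), and simultaneously one must carry along the stalk condition $(F_{t_j}-F,o)\in\mathcal I(\varphi)_o$ via closedness of coherent sheaf sections under compact convergence. This is exactly the argument already deployed in the proof of Lemma \ref{lem:A}, so the work is in adapting it to a varying domain rather than inventing anything new; the monotonicity, the left-limit inequality, and the limit at $+\infty$ are comparatively routine.
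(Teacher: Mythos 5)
Your proof is correct and follows essentially the same plan as the paper's: monotonicity from restriction of competitors, right-continuity via a normal-families extraction over the increasing union $\{\varphi<-t_0\}=\bigcup_{t>t_0}\{\varphi<-t\}$ (using Levi's theorem for the $L^2$ bound and closedness of coherent-sheaf sections to preserve the stalk condition $(\cdot-F,o)\in\mathcal I(\varphi)_o$), and the left-limit inequality from monotonicity; the paper phrases right-continuity as a proof by contradiction while you do it directly, but the content is identical. One place where you actually go beyond the paper: the paper's written proof of Lemma \ref{lem:B} asserts $\lim_{t\to+\infty}G(t)=0$ in the statement but never addresses it in the proof, whereas you supply a clean argument by dominating $G(t)\leq\int_{\{\varphi<-t\}}|F_0|^2$ with the fixed minimizer $F_0$ and invoking dominated convergence together with the fact that the polar set $\{\varphi=-\infty\}$ has Lebesgue measure zero; this is a genuine fill-in of a gap in the published argument, and the claim is in fact used later in Step 1 of the proof of Proposition \ref{prop:lower_sharp}.
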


\begin{proof}
By the definition of $G(t)$,
it is clear that $G(t)$ is decreasing on $[0,+\infty)$ and $\lim_{t\to t_{0}-0}G(t)\geq G(t_{0})$.
It suffices to prove $\lim_{t\to t_{0}+0}G(t)=G(t_{0}).$
We prove it by contradiction:
if not,
then $\lim_{t\to t_{0}+0}G(t)<G(t_{0})$.

By Lemma \ref{lem:A}, there exists a unique holomorphic function $F_{t}$ on
$\{\varphi<-t\}$ satisfying $(F_{t}-F,o)$ and
$\int_{\{\varphi<-t\}}|F_{t}|^{2}=G(t)$.
Note that $G(t)$ is decreasing implies that
$\int_{\varphi<-t}|F_{t}|^{2}\leq\lim_{t\to t_{0}+0}G(t)$ for any $t<t_{0}$,
then for any compact subset $K$ of $\{\varphi<-t_{0}\}$,
there exists $\{F_{t_{j}}\}$ $(t_{j}\to t_{0}-0,$ as $j\to+\infty)$
uniformly convergent on $K$,
which implies that there exists a subsequence of $\{F_{t_{j}}\}$ (also denoted by $\{F_{t_{j}}\}$) convergent on
any compact subset of $\{\varphi<-t_{0}\}$.

Let $\hat{F}_{t_{0}}:=\lim_{j\to+\infty}F_{t_{j}}$, which is a holomorphic function on $\{\varphi<-t_{0}\}$.
Then it follows from the decreasing property of $G(t)$ that
$\int_{K}|\hat{F}_{t_{0}}|^{2}\leq \lim_{j\to+\infty}\int_{K}|F_{t_{j}}|^{2}\leq
\lim_{j\to+\infty}G(t_{j})\leq\lim_{t\to t_{0}+0}G(t)$ for any
compact set $K\subset \{\varphi<-t_{0}\}$.
It follows from Levi's theorem that
$\int_{D}|\hat{F}_{t_{0}}|^{2}\leq \lim_{t\to t_{0}+0}G(t)$.
Then we obtain that $G_{t_0}\leq\int_{D}|\hat{F}_{t_{0}}|^{2}\leq \lim_{t\to t_{0}+0}G(t)$,
which contradicts $\lim_{t\to t_{0}+0}G(t)<G(t_{0})$.
\end{proof}

We prove Lemma \ref{lem:C} by the following Lemma,
whose various forms already appear in \cite{guan-zhou13p,guan-zhou13ap} etc.:

\begin{Lemma}\label{lem:GZ_sharp}(see  \cite{GZopen-effect}, see also \cite{guan-zhou13p,guan-zhou13ap})
Let $B\in(0,1]$ be arbitrarily given.
Let $D$ be a pseudoconvex domain in
$\mathbb{C}^{n}$ containing $o$.
Let $\varphi$ be a negative plurisubharmonic function
on $D$, such that $\varphi(o)=-\infty$.
Let $F$ be {an $L^{2}$ integrable} holomorphic function on $\{\varphi<-t_{0}\}$.
Then there exists a
holomorphic function $\widetilde{F}$ on $D$, such that,
$$(\widetilde{F}-F,o)\in\mathcal{I}(\varphi)_{o}$$
and
\begin{equation}
\label{equ:GZa}
\begin{split}
&\int_{ D}|\tilde{F}-(1-b_{t_0,B}(\varphi))F|^{2}
\\\leq&(1-e^{-(t_{0}+B)})\int_{D}\frac{1}{B}(\mathbb{I}_{\{-t_{0}-B<t<-t_{0}\}}\circ\varphi)|F|^{2}e^{-\varphi},
\end{split}
\end{equation}
where $\mathbb{I}_{\{-t_{0}-B<t<-t_{0}\}}$ is the character function of set $\{-t_{0}-B<t<-t_{0}\}$,
$b_{t_0,B}(t)=\int_{-\infty}^{t}\frac{1}{B}\mathbb{I}_{\{-t_{0}-B< s<-t_{0}\}}ds$,
and $t_{0}\geq 0$.
\end{Lemma}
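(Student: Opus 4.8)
The plan is to obtain $\widetilde{F}$ by correcting the non-holomorphic ``truncated extension'' $v:=(1-b_{t_0,B}(\varphi))F$ with the solution of a $\bar\partial$-equation, and to choose the auxiliary weights in a twisted H\"ormander-type $L^{2}$ estimate so that the constant comes out to be exactly $1-e^{-(t_{0}+B)}$. First I would reduce to the case where $\varphi$ is smooth and $D$ is relatively compact and (strongly) pseudoconvex: exhaust $D$ by such $D_{m}\Subset D$ with $o\in D_{1}$, and on $\overline{D_{m}}$ approximate $\varphi$ from above by smooth negative plurisubharmonic functions $\varphi_{m,k}\downarrow\varphi$; since $\varphi(o)=-\infty$ we have $\varphi_{m,k}(o)\to-\infty$, so for $k$ large $o\in\{\varphi_{m,k}<-t_{0}\}\subset\{\varphi<-t_{0}\}$, where $F$ is still defined. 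Once the statement is proved on each $(D_{m},\varphi_{m,k})$ with the asserted uniform $L^{2}$ bound, I would extract by Montel a subsequence of the resulting holomorphic functions converging locally uniformly on $D$; the limit $\widetilde{F}$ inherits the bound by Fatou, and $(\widetilde{F}-F,o)\in\mathcal{I}(\varphi)_{o}$ follows from the closedness of sections of coherent analytic sheaves under compact convergence (see \cite{G-R}), exactly as in the proof of Lemma \ref{lem:0}.

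So assume $\varphi$ is smooth. As $b_{t_0,B}\equiv1$ on $[-t_{0},+\infty)$, the function $v:=(1-b_{t_0,B}(\varphi))F$ on $\{\varphi<-t_{0}\}$, extended by $0$ to $\{\varphi\geq-t_{0}\}$, is smooth on $D$, and $\bar\partial v=-\frac1B(\mathbb{I}_{\{-t_{0}-B<t<-t_{0}\}}\circ\varphi)\,F\,\bar\partial\varphi$ is a $\bar\partial$-closed $(0,1)$-form supported in the shell $\Omega:=\{-t_{0}-B<\varphi<-t_{0}\}$. I would solve $\bar\partial u=\bar\partial v$ and put $\widetilde{F}:=v-u$, which is holomorphic on $D$. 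Since $\varphi$ is upper semicontinuous with $\varphi(o)=-\infty$, we have $\varphi<-t_{0}-B$ near $o$, so $v=F$ and $\widetilde{F}-F=-u$ there; hence, as soon as the solution satisfies $\int_{D}|u|^{2}e^{-\varphi}<+\infty$, we get $(u,o)\in\mathcal{I}(\varphi)_{o}$, that is $(\widetilde{F}-F,o)\in\mathcal{I}(\varphi)_{o}$.

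The heart of the matter is to solve $\bar\partial u=\bar\partial v$ with a weighted estimate whose constant is exactly $1-e^{-(t_{0}+B)}$, and this is the step I expect to be delicate. I would run the twisted H\"ormander estimate (of Donnelly--Fefferman/Berndtsson type, in the optimal-constant form of \cite{guan-zhou13p,guan-zhou13ap,GZopen-effect}) for $\bar\partial$ with a weight built from $\varphi$ together with auxiliary functions $\eta(\varphi)$ and $\phi(\varphi)$, $\phi$ convex nondecreasing, so that the curvature furnished by $i\partial\bar\partial(\varphi+\phi(\varphi))\geq\phi''(\varphi)\,i\partial\varphi\wedge\bar\partial\varphi$ dominates precisely the $i\partial\varphi\wedge\bar\partial\varphi$-part of the data $\bar\partial v$. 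The pair $(\eta,\phi)$ would be subject to a first-order ODE of the type appearing in \cite{guan-zhou13p,guan-zhou13ap}, tuned so that the primitive generated by integrating across the shell equals $\int_{-t_{0}-B}^{0}e^{s}\,ds=1-e^{-(t_{0}+B)}$; since $\bar\partial v$ is concentrated on $\Omega$ with $|\bar\partial v|^{2}$ comparable to $\frac1{B^{2}}|F|^{2}|\bar\partial\varphi|^{2}$, the resulting inequality should read $\int_{D}|u|^{2}e^{-\varphi}\leq(1-e^{-(t_{0}+B)})\int_{D}\frac1B(\mathbb{I}_{\{-t_{0}-B<t<-t_{0}\}}\circ\varphi)|F|^{2}e^{-\varphi}$ (the hypothesis $B\in(0,1]$ being what keeps the curvature inequality and the ODE in the admissible regime). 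Using $e^{-\varphi}\geq1$ on the left then gives inequality \ref{equ:GZa}, and removing the regularization as in the first paragraph completes the proof. The one genuinely delicate point is exactly this calibration of $\eta$ and $\phi$ so that no constant is lost at the Cauchy--Schwarz/curvature step — this is what forces the sharp factor $1-e^{-(t_{0}+B)}$, and hence, through the analysis of $G(t)$, the sharpness asserted in Theorem \ref{thm:effect_optimal}.
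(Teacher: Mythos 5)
Your proposal takes essentially the same route as the paper's own proof (Lemma \ref{lem:unify} in the appendix): truncate $F$ to $v=(1-b_{t_0,B}(\varphi))F$, solve $\bar\partial u=\bar\partial v$ with a twisted H\"ormander estimate whose auxiliary weight functions $\eta=s(\cdot)$, $\phi=u(\cdot)$ are calibrated by an ODE system so that the constant comes out exactly $1-e^{-(t_0+B)}$, and then remove the regularization by Montel and dominated convergence. Two small imprecisions that you may want to note: the calibration in \cite{guan-zhou13p,guan-zhou13ap,GZopen-effect} leads to a second-order ODE system $(s+\frac{s'^2}{u''s-s''})e^{u-t}=1$, $s'-su'=1$ (not a first-order one), whose solution $u(t)=-\log(1-e^{-t})$, $s(t)=\frac{t}{1-e^{-t}}-1$ produces the constant as $\sup_{t\in(0,t_0+B]}e^{-u(t)}=1-e^{-(t_0+B)}$ rather than literally as the integral $\int_{-t_0-B}^{0}e^s\,ds$; and the restriction $B\le 1$ is not actually what keeps the ODE admissible (the paper's Lemma \ref{lem:unify} allows any $B>0$).
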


Although the $F$ in Lemma 2.1 in \cite{GZopen-effect} is holomorphic on $D$,
in fact the condition that $F$ is an $L^2$ integrable holomorphic on $\{\varphi<-t_{0}\}$ is enough for the proof in \cite{GZopen-effect}
(details see Section \ref{sec:unify}).

Using Lemma \ref{lem:GZ_sharp}, we present the following Lemma,
which will be used to prove Proposition \ref{prop:lower_sharp}.

\begin{Lemma}
\label{lem:C}
Assume that $G(0)<+\infty$.
Then for any $t_{0}\in[0,+\infty)$,
we have
$$G(0)-G(t_{0})\leq (e^{t_{0}}-1)\liminf_{B\to0+0}(-\frac{G(t_{0}+B)-G(t_{0})}{B}).$$
\end{Lemma}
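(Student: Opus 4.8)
The plan is to combine Lemma \ref{lem:GZ_sharp} with the definition of $G(t)$ as a minimal $L^2$ integral, exactly in the spirit of the "ODE" method used in the author's earlier work. Fix $t_0 \in [0,+\infty)$ and $B \in (0,1]$. By Lemma \ref{lem:A} (applied on $\{\varphi < -(t_0+B)\}$, which is legitimate since $G(0) < +\infty$ forces $G(t_0+B) < +\infty$ by monotonicity) there is a holomorphic function $F_{t_0+B}$ on $\{\varphi < -(t_0+B)\}$ with $(F_{t_0+B}-F,o) \in \mathcal{I}(\varphi)_o$ and $\int_{\{\varphi<-(t_0+B)\}} |F_{t_0+B}|^2 = G(t_0+B)$. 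I would then feed $F_{t_0+B}$ into Lemma \ref{lem:GZ_sharp} in place of $F$, with the same cutoff level $t_0$: this produces a holomorphic $\widetilde{F}$ on $D$ with $(\widetilde{F}-F_{t_0+B},o)\in\mathcal{I}(\varphi)_o$ — hence $(\widetilde{F}-F,o)\in\mathcal{I}(\varphi)_o$ as well — and satisfying inequality \ref{equ:GZa}. Since $\widetilde{F}$ is a competitor for $G(0) = C_{F,\varphi}(D)$, we get $G(0) \le \int_D |\widetilde{F}|^2$.

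Next I would estimate $\int_D |\widetilde{F}|^2$ from above using \ref{equ:GZa}. Writing $\widetilde{F} = (1-b_{t_0,B}(\varphi))F_{t_0+B} + \big(\widetilde{F}-(1-b_{t_0,B}(\varphi))F_{t_0+B}\big)$ and applying the triangle inequality in $L^2(D)$, the "main term" $\int_D |(1-b_{t_0,B}(\varphi))F_{t_0+B}|^2$ is supported on $\{\varphi < -(t_0+B)\}\cup\{-t_0-B<\varphi<-t_0\}$ — note $1-b_{t_0,B}(\varphi)$ equals $1$ on $\{\varphi<-(t_0+B)\}$ and vanishes on $\{\varphi\ge -t_0\}$ — so it is bounded by $G(t_0+B)$ plus an error concentrated on the thin shell $\{-t_0-B<\varphi<-t_0\}$; the cross/error term from \ref{equ:GZa} is controlled by $(1-e^{-(t_0+B)})\cdot\frac{1}{B}\int_{\{-t_0-B<\varphi<-t_0\}}|F_{t_0+B}|^2 e^{-\varphi}$. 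On that shell $e^{-\varphi} \le e^{t_0+B}$, so this whole shell contribution is $O\!\big(\frac{1}{B}\int_{\{-t_0-B<\varphi<-t_0\}}|F_{t_0+B}|^2\big)$. Rearranging the resulting inequality $G(0) \le G(t_0+B) + (\text{const depending on }t_0, B)\cdot\frac{1}{B}\big(G(t_0) - G(t_0+B)\big)$ — using that $\int_{\{-t_0-B<\varphi<-t_0\}}|F_{t_0+B}|^2 = \int_{\{\varphi<-t_0\}}|F_{t_0+B}|^2 - \int_{\{\varphi<-(t_0+B)\}}|F_{t_0+B}|^2 \le G(t_0) - G(t_0+B)$, the last step needing that $F_{t_0+B}$ restricted to $\{\varphi<-t_0\}$ is a competitor for $G(t_0)$ — gives $G(0)-G(t_0+B) \le C(t_0,B)\cdot\frac{G(t_0)-G(t_0+B)}{B}$. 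Finally I would let $B\to 0+$: $G(t_0+B)\to G(t_0)$ by the right-continuity in Lemma \ref{lem:B}, the constant $C(t_0,B)$ should tend to something like $e^{t_0}-1$ after the careful bookkeeping of the factors $(1-e^{-(t_0+B)})$, $e^{t_0+B}$ and the square-root from the triangle inequality, and taking $\liminf_{B\to 0+}$ of the right-hand side yields the claimed $G(0)-G(t_0) \le (e^{t_0}-1)\liminf_{B\to 0+}\big(-\frac{G(t_0+B)-G(t_0)}{B}\big)$.

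I expect the main obstacle to be the bookkeeping in the triangle-inequality step: squaring $\|a+b\|_{L^2}\le \|a\|_{L^2}+\|b\|_{L^2}$ introduces a cross term $2\|a\|\|b\|$ which is not obviously $O(B)$ unless one is careful, so I would instead expand $\int_D|\widetilde F|^2$ directly and use that, on $\{\varphi \ge -t_0\}$, $\widetilde F$ differs from $0$ only through the error term, while on $\{\varphi<-(t_0+B)\}$ it differs from $F_{t_0+B}$ only through the error term — keeping everything as honest integrals and only invoking Cauchy–Schwarz on the shell where the weight $e^{-\varphi}$ is uniformly comparable to $e^{t_0}$. A secondary technical point is justifying that Lemma \ref{lem:GZ_sharp} applies with $F_{t_0+B}$ (only defined and $L^2$ on $\{\varphi<-(t_0+B)\}$, not on $\{\varphi<-t_0\}$) in place of its hypothesis "holomorphic and $L^2$ on $\{\varphi<-t_0\}$"; here I would either first observe $F_{t_0+B}$ extends (via the cutoff $1-b$, which already vanishes near $\{\varphi=-t_0\}$) or appeal to the remark in the excerpt that the proof in \cite{GZopen-effect} only needs $F$ to be $L^2$-holomorphic on the relevant sublevel set, and re-run it with cutoff level $t_0$ and base level $t_0+B$. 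Modulo these care points, the argument is a routine instance of the author's minimal-$L^2$-integral ODE technique.
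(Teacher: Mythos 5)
Your plan has a genuine gap, and it is not the ``secondary technical point'' you flag at the end --- it is the central obstruction. You choose to feed $F_{t_0+B}$ (the minimizer on $\{\varphi < -(t_0+B)\}$) into Lemma~\ref{lem:GZ_sharp} with the cutoff still at level $t_0$. But $F_{t_0+B}$ is holomorphic only on $\{\varphi < -(t_0+B)\}$, while the lemma needs $F$ to be holomorphic and $L^2$ on all of $\{\varphi < -t_0\}$: its conclusion involves $(1-b_{t_0,B}(\varphi))F$ and the shell integral $\int_D \frac{1}{B}\mathbb{I}_{\{-t_0-B<\varphi<-t_0\}}|F|^2 e^{-\varphi}$, both of which require $F$ to be defined on the shell $\{-t_0-B<\varphi<-t_0\}$. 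Your proposed workaround --- that $1-b_{t_0,B}$ ``already vanishes near $\{\varphi=-t_0\}$'' --- is not correct on the shell: $1-b_{t_0,B}(\varphi)$ equals $\frac{-\varphi-t_0}{B}\in(0,1)$ there and only vanishes on $\{\varphi\ge -t_0\}$. Consequently the step $\int_{\{-t_0-B<\varphi<-t_0\}}|F_{t_0+B}|^2 \le G(t_0)-G(t_0+B)$ is meaningless (the integrand is undefined), and the claim that ``$F_{t_0+B}$ restricted to $\{\varphi<-t_0\}$ is a competitor for $G(t_0)$'' fails for the same reason --- it is not even defined on that set. Re-running the lemma with ``base level $t_0+B$'' also does not help: that shifts the shell to $\{-t_0-B-B'<\varphi<-t_0-B\}$ and you lose the contact with level $t_0$ that the estimate needs.

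The paper's proof makes the opposite (and correct) choice: it takes the minimizer $F_{t_0}$ at level $t_0$, which \emph{is} defined on $\{\varphi<-t_0\}$, so Lemma~\ref{lem:GZ_sharp} applies directly. The shell bound then reads $\int_{\{-t_0-B<\varphi<-t_0\}}|F_{t_0}|^2 \le G(t_0) - G(t_0+B)$, which is legitimate because $F_{t_0}\vert_{\{\varphi<-(t_0+B)\}}$ is a competitor for $G(t_0+B)$, giving $\int_{\{\varphi<-(t_0+B)\}}|F_{t_0}|^2 \ge G(t_0+B)$ --- the inclusion goes the useful way. Your instinct to avoid a naive triangle inequality is sound, but the precise mechanism in the paper is the orthogonality identity~\ref{equ:20170913e} from Lemma~\ref{lem:A}: it converts $\int_{\{\varphi<-t_0\}}|\tilde F_j - F_{t_0}|^2$ into $\int_{\{\varphi<-t_0\}}|\tilde F_j|^2 - G(t_0)$, so that after splitting $\int_D|\tilde F_j-(1-b)F_{t_0}|^2$ over $\{\varphi\ge -t_0\}$ and $\{\varphi<-t_0\}$, the only term needing Cauchy--Schwarz is the genuinely small cross term $2\bigl(\int_{\{\varphi<-t_0\}}|\tilde F_j - F_{t_0}|^2\bigr)^{1/2}\bigl(\int_{\{-t_0-B_j<\varphi<-t_0\}}|F_{t_0}|^2\bigr)^{1/2}$, which vanishes along the chosen subsequence $B_j\to 0^+$ by dominated convergence. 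In short: replace $F_{t_0+B}$ by $F_{t_0}$ throughout and invoke the orthogonality from Lemma~\ref{lem:A}, and your outline becomes the paper's argument.
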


\begin{proof}
By Lemma \ref{lem:A},
there exists a
holomorphic function $F_{t_{0}}$ on $\{\varphi<t_{0}\}$, such that,
$(F_{t_{0}}-F,o)\in\mathcal{I}(\varphi)_{o}$ and
$\int_{\{\varphi<-t_{0}\}}|F_{t_{0}}|^{2}=G(t_{0})$.

It suffices to consider that $\liminf_{B\to0+0}-\frac{G(t_{0}+B)-G(t_{0})}{B}\in(-\infty,0]$
because of the decreasing property of $G(t)$.
Then there exists $B_{j}\to 0+0$ $(j\to+\infty)$ such that
$$\lim_{j\to+\infty}-\frac{G(t_{0}+B_{j})-G(t_{0})}{B_{j}}=\liminf_{B\to0+0}-\frac{G(t_{0}+B)-G(t_{0})}{B}$$
and $\{\frac{G(t_{0}+B_{j})-G(t_{0})}{B_{j}}\}_{j\in\mathbb{N}^{+}}$ is bounded.

By Lemma \ref{lem:GZ_sharp},
it follows that for any $B_{j}$,
there exists
holomorphic function $\tilde{F}_{j}$ on $D$, such that,
$$(\tilde{F}_{j}-F_{t_{0}},o)\in\mathcal{I}(\varphi)_{o}\,\,\,\,(\Rightarrow (\tilde{F}_{j}-F,o)\in\mathcal{I}(\varphi)_{o})$$
and
\begin{equation}
\label{equ:GZc}
\begin{split}
&\int_{ D}|\tilde{F}_{j}-(1-b_{t_{0},B_{j}}(\varphi))F_{t_{0}}|^{2}
\\\leq&(1-e^{-(t_{0}+B_{j})})\int_{D}\frac{1}{B_{j}}(\mathbb{I}_{\{-t_{0}-B_{j}<t<-t_{0}\}}\circ\varphi)|F_{t_{0}}|^{2}e^{-\varphi}
\\\leq&(e^{(t_{0}+B_{j})}-1)\int_{D}\frac{1}{B_{j}}(\mathbb{I}_{\{-t_{0}-B_{j}<t<-t_{0}\}}\circ\varphi)|F_{t_{0}}|^{2},
\end{split}
\end{equation}

Firstly, we will prove that $\int_{ D}|\tilde{F}_{j}|^{2}$ is bounded with respect to $j$.

Note that
\begin{equation}
\label{equ:20170915d}
\begin{split}
\int_{D}\frac{1}{B_{j}}(\mathbb{I}_{\{-t_{0}-B_{j}<t<-t_{0}\}}\circ\varphi)|F_{t_{0}}|^{2}
\leq&\frac{\int_{\{\varphi<-t_{0}\}}|F_{t_{0}}|^{2}-\int_{\{\varphi<-t_{0}-B_{j}\}}|F_{t_{0}}|^{2}}{B_{j}}
\\\leq&\frac{G(t_{0})-G(t_{0}+B_{j})}{B_{j}},
\end{split}
\end{equation}
and
\begin{equation}
\label{equ:GZd}
\begin{split}
&(\int_{ D}|\tilde{F}_{j}-(1-b_{t_{0},B_{j}}(\varphi))F_{t_{0}}|^{2})^{1/2}
\\\geq&(\int_{ D}|\tilde{F}_{j}|^{2})^{1/2}-(\int_{ D}|(1-b_{t_{0},B_{j}}(\varphi))F_{t_{0}}|^{2})^{1/2}
\end{split}
\end{equation}
then it follows from inequality \ref{equ:GZc} that
\begin{equation}
\label{equ:GZe}
\begin{split}
(\int_{ D}|\tilde{F}_{j}|^{2})^{1/2}
\leq&(e^{(t_{0}+B_{j})}-1)(\frac{G(t_{0})-G(t_{0}+B_{j})}{B_{j}})^{1/2}
\\&+(\int_{ D}|(1-b_{t_{0},B_{j}}(\varphi))F_{t_{0}}|^{2})^{1/2}.
\end{split}
\end{equation}
Since $\{\frac{G(t_{0}+B_{j})-G(t_{0})}{B_{j}}\}_{j\in\mathbb{N}^{+}}$ is bounded and $0\leq b_{t_{0},B_{j}}(\varphi)\leq 1$,
then $\int_{ D}|\tilde{F}_{j}|^{2}$ is bounded with respect to $j$.

\

Secondly, we will prove the main result.

Note that $b_{t_0}(\varphi)=1$ on $\{\varphi\geq -t_{0}\}$,
the it follows that
\begin{equation}
\label{equ:20170915b}
\begin{split}
\int_{D}|\tilde{F}_{j}-(1-b_{t_{0},B_{j}}(\varphi))F_{t_{0}}|^{2}
=&\int_{\{\varphi\geq-t_{0}\}}|\tilde{F}_{j}|^{2}+\int_{\{\varphi<-t_{0}\}}|\tilde{F}_{j}-(1-b_{t_{0},B_{j}}(\varphi))F_{t_{0}}|^{2}
\end{split}
\end{equation}

It is clear that
\begin{equation}
\label{equ:20170915c}
\begin{split}
&\int_{\{\varphi <-t_{0}\}}|\tilde{F}_{j}-(1-b_{t_0}(\varphi))F_{t_{0}}|^{2}
\\\geq&((\int_{\{\varphi <-t_{0}\}}|\tilde{F}_{j}-F_{t_{0}}|^{2})^{1/2}-
(\int_{\{\varphi <-t_{0}\}}|b_{t_0,B_{j}}(\varphi)F_{t_{0}}|^{2})^{1/2})^{2}
\\\geq&
\int_{\{\varphi <-t_{0}\}}|\tilde{F}_{j}-F_{t_{0}}|^{2}-
2(\int_{\{\varphi <-t_{0}\}}|\tilde{F}_{j}-F_{t_{0}}|^{2})^{1/2}
(\int_{\{\varphi <-t_{0}\}}|b_{t_0,B_{j}}(\varphi)F_{t_{0}}|^{2})^{1/2}
\\\geq&
\int_{\{\varphi <-t_{0}\}}|\tilde{F}_{j}-F_{t_{0}}|^{2}-2(\int_{\varphi <-t_{0}}|\tilde{F}_{j}-F_{t_{0}}|^{2})^{1/2}
(\int_{\{-t_{0}-B_{j}<\varphi <-t_{0}\}}|F_{t_{0}}|^{2})^{1/2},
\end{split}
\end{equation}
where the last inequality follows from $0\leq b_{t_{0},B_{j}}(\varphi)\leq 1$ and $b_{t_{0},B_{j}}(\varphi)=0$ on $\{\varphi\leq -t_{0}-B_{0}\}$.

Combining equality \ref{equ:20170915b}, inequality \ref{equ:20170915c} and equality \ref{equ:20170913e},
we obtain that
\begin{equation}
\label{equ:20170915e}
\begin{split}
&\int_{D}|\tilde{F}_{j}-(1-b_{t_{0},B_{j}}(\varphi))F_{t_{0}}|^{2}
\\=&\int_{\{\varphi\geq-t_{0}\}}|\tilde{F}_{j}|^{2}+\int_{\{\varphi<-t_{0}\}}|\tilde{F}-(1-b_{t_{0},B_{j}}(\varphi))F_{t_{0}}|^{2}
\\\geq&
\int_{\{\varphi\geq-t_{0}\}}|\tilde{F}_{j}|^{2}+
\int_{\{\varphi <-t_{0}\}}|\tilde{F}_{j}-F_{t_{0}}|^{2}
\\&-2(\int_{\{\varphi <-t_{0}\}}|\tilde{F}_{j}-F_{t_{0}}|^{2})^{1/2}
(\int_{\{-t_{0}-B_{j}<\varphi <-t_{0}\}}|F_{t_{0}}|^{2})^{1/2}.
\\\geq&
\int_{\{\varphi\geq-t_{0}\}}|\tilde{F}_{j}|^{2}+
\int_{\{\varphi <-t_{0}\}}|\tilde{F}_{j}|^{2}-\int_{\{\varphi <-t_{0}\}}|F_{t_{0}}|^{2}
\\&-2(\int_{\{\varphi <-t_{0}\}}|\tilde{F}_{j}-F_{t_{0}}|^{2})^{1/2}
(\int_{\{-t_{0}-B_{j}<\varphi <-t_{0}\}}|F_{t_{0}}|^{2})^{1/2}
\\=&
\int_{D}|\tilde{F}_{j}|^{2}-\int_{\{\varphi <-t_{0}\}}|F_{t_{0}}|^{2}
\\&-2(\int_{\{\varphi <-t_{0}\}}|\tilde{F}_{j}-F_{t_{0}}|^{2})^{1/2}
(\int_{\{-t_{0}-B_{j}<\varphi <-t_{0}\}}|F_{t_{0}}|^{2})^{1/2}.
\end{split}
\end{equation}

It follows from equality \ref{equ:20170913e} that
\begin{equation}
\label{equ:20170917a}
(\int_{\{\varphi <-t_{0}\}}|\tilde{F}_{j}-F_{t_{0}}|^{2})^{1/2}\leq (\int_{\{\varphi <-t_{0}\}}|\tilde{F}_{j}|^{2})^{1/2}
\leq(\int_{ D}|\tilde{F}_{j}|^{2})^{1/2}.
\end{equation}
Since $\int_{ D}|\tilde{F}_{j}|^{2}$ is bounded with respect to $j$,
then it follows from inequality \ref{equ:20170917a} that
$(\int_{\{\varphi <-t_{0}\}}|\tilde{F}_{j}-F_{t_{0}}|^{2})^{1/2}$ is bounded with respect to $j$.
Using the dominated convergence theorem and $\int_{\{\varphi<-t_{0}\}}|F_{t_{0}}|^{2}=G(t_{0})\leq G(0)<+\infty$,
we obtain that
$\lim_{j\to +\infty}\int_{\{-t_{0}-B_{j}<\varphi <-t_{0}\}}|F_{t_{0}}|^{2}d\lambda_{n}=0,$
Then it follows that
$$\lim_{j\to +\infty}(\int_{\{\varphi <-t_{0}\}}|\tilde{F}_{j}-F_{t_{0}}|^{2})^{1/2}
(\int_{\{-t_{0}-B_{j}<\varphi <-t_{0}\}}|F_{t_{0}}|^{2})^{1/2}=0.$$
Combining with inequality \ref{equ:20170915e},
we obtain
\begin{equation}
\label{equ:20170916a}
\begin{split}
\liminf_{j\to+\infty}\int_{D}|\tilde{F}_{j}-(1-b_{t_{0},B_{j}}(\varphi))F_{t_{0}}|^{2}
\geq \int_{D}|\tilde{F}_{j}|^{2}-\int_{\{\varphi <-t_{0}\}}|F_{t_{0}}|^{2}.
\end{split}
\end{equation}

Using inequality \ref{equ:20170915d} (2rd $"\geq"$), inequality \ref{equ:GZc} (3rd $"\geq"$) and inequality \ref{equ:20170916a} (4th $"\geq"$), we obtain
\begin{equation}
\label{equ:20170912b}
\begin{split}
&(e^{t_{0}}-1)\lim_{j\to+\infty}(-\frac{G(t_{0}+B_{j})-G(t_{0})}{B_{j}})
\\=&\lim_{j\to+\infty}(e^{(t_{0}+B_{j})}-1)(-\frac{G(t_{0}+B_{j})-G(t_{0})}{B_{j}})
\\\geq&\liminf_{j\to+\infty}(e^{(t_{0}+B_{j})}-1)\int_{D}\frac{1}{B_{j}}(\mathbb{I}_{\{-t_{0}-B<t<-t_{0}\}}\circ\varphi)|F_{t_{0}}|^{2}
\\\geq&\liminf_{j\to+\infty}\int_{D}|\tilde{F}_{j}-(1-b_{t_{0},B_{j}}(\varphi))F_{t_{0}}|^{2}
\\\geq&\int_{D}|\tilde{F}_{j}|^{2}-\int_{\{\varphi <-t_{0}\}}|F_{t_{0}}|^{2}
\geq G(0)-G(t_{0}).
\end{split}
\end{equation}
Then Lemma \ref{lem:C} has thus been proved.
\end{proof}

The following Lemma will be used to prove Theorem \ref{thm:effect_optimal}.

\begin{Lemma}
\label{lem:D}Let $F$ be a holomorphic function on pseudoconvex domain $D$,
and let $\varphi$ be a negative plurisubharmonic function on $D$.
Assume that $\int_{D}|F|^{2}e^{-\varphi}<+\infty$.
Then
$$\int_{D}|F|^{2}e^{-\varphi}=\int_{-\infty}^{+\infty}(\int_{\{\varphi<-t\}}|F|^{2})e^{t}dt.$$
\end{Lemma}

\begin{proof}
For any $M\in\mathbb{N}^{+}$,
note that $\frac{1}{2^{m}}\sum_{i=1}^{2^{m}M}\mathbb{I}_{\{e^{-\varphi}>\frac{i}{2^m}\}}$ is increasing with respect to
$m$ and convergent to $e^{-\max\{\varphi,-\log{M}\}}\geq0$ $(m\to+\infty)$,
then it follows from Levi's Theorem that
\begin{equation}
\begin{split}
\label{equ:20170914a}
\int_{D}|F|^{2}e^{-\max\{\varphi,-\log M\}}
&=
\lim_{m\to+\infty}\sum_{i=1}^{2^{m}M}\int_{D}|F|^{2}(\frac{1}{2^{m}}\sum_{i=1}^{2^{m}M}\mathbb{I}_{\{e^{-\varphi}>\frac{i}{2^m}\}})
\\&=\lim_{m\to+\infty}\frac{1}{2^{m}}\sum_{i=1}^{2^{m}M}(\int_{D}|F|^{2}\mathbb{I}_{\{e^{-\varphi}>\frac{i}{2^m}\}})
\\&=\lim_{m\to+\infty}\frac{1}{2^{m}}\sum_{i=1}^{2^{m}M}(\int_{\{e^{-\varphi}>\frac{i}{2^m}\}}|F|^{2}),
\end{split}
\end{equation}
where $\mathbb{I}_{A}$ is the character function of set $A$.

As $\int_{D}|F|^{2}\leq\int_{D}|F|^{2}e^{-\varphi}<+\infty$,
then $\int_{\{e^{-\varphi}>s\}}|F|^{2}$ is finite and non-negative for any $s$ and decreasing with respect to $s$,
which implies $\int_{\{e^{-\varphi}>s\}}|F|^{2}$ is Riemann integrable
and
\begin{equation}
\begin{split}
\label{equ:20170914b}
\int_{0}^{M}(\int_{\{e^{-\varphi}>s\}}|F|^{2})ds
=\lim_{m\to+\infty}\frac{1}{2^{m}}\sum_{i=1}^{2^{m}M}(\int_{\{e^{-\varphi}>\frac{i}{2^m}\}}|F|^{2}),
\end{split}
\end{equation}
holds for any $M\in\mathbb{N}^{+}$.
Note that $\int_{D}|F|^{2}e^{-\varphi}<+\infty$,
then it follows from equality \ref{equ:20170914a} and \ref{equ:20170914b} that
\begin{equation}
\label{equ:20170915g}
\begin{split}
\int_{D}|F|^{2}e^{-\max\{\varphi,-\log M\}}
=\int_{0}^{M}(\int_{\{e^{-\varphi}>s\}}|F|^{2})ds.
\end{split}
\end{equation}
As $\int_{D}|F|^{2}e^{-\varphi}<+\infty$,
then it follows from the Levi's theorem that
\begin{equation}
\label{equ:20170915h}
\begin{split}
\lim_{M\to+\infty}\int_{D}|F|^{2}e^{-\max\{\varphi,-\log M\}}
=\int_{D}|F|^{2}e^{-\varphi}<+\infty.
\end{split}
\end{equation}
Combining equality \ref{equ:20170915g} and equality \ref{equ:20170915h},
we obtain that
\begin{equation}
\label{equ:20170915i}
\begin{split}
\lim_{M\to+\infty}\int_{0}^{M}(\int_{\{e^{-\varphi}>s\}}|F|^{2})ds
&=\lim_{M\to+\infty}\int_{D}|F|^{2}e^{-\max\{\varphi,-\log M\}}
\\&=\int_{D}|F|^{2}e^{-\varphi}<+\infty.
\end{split}
\end{equation}
Note that $\int_{0}^{+\infty}(\int_{\{e^{-\varphi}>s\}}|F|^{2})ds=\lim_{M\to+\infty}\int_{0}^{M}(\int_{\{e^{-\varphi}>s\}}|F|^{2})ds<+\infty$,
then it follows from equality \ref{equ:20170915i} that
\begin{equation}
\label{equ:20170917b}
\int_{0}^{+\infty}(\int_{\{e^{-\varphi}>s\}}|F|^{2})ds=\int_{D}|F|^{2}e^{-\varphi}.
\end{equation}

Let $s=e^{t}$, then
\begin{equation}
\label{equ:20170914c}
\begin{split}
\int_{0}^{+\infty}(\int_{\{e^{-\varphi}>s\}}|F|^{2})ds
&=\int_{-\infty}^{+\infty}(\int_{\{e^{-\varphi}>e^{t}\}}|F|^{2})de^{t}
\\&=\int_{-\infty}^{+\infty}(\int_{\{\varphi<-t\}}|F|^{2})e^{t}dt.
\end{split}
\end{equation}
Combining equality \ref{equ:20170917b} and equality \ref{equ:20170914c},
we obtain Lemma \ref{lem:D}.
\end{proof}

\subsection{A sharp lower bound of the volume of the sublevel sets of plurisubharmonic functions
with a multiplier}
\

In order to prove Theorem \ref{thm:effect_optimal} and Theorem \ref{thm:lower_optimal},
we present the following sharp lower bound of the volume of the sublevel sets of plurisubharmonic functions
with a multiplier.

\begin{Proposition}
\label{prop:lower_sharp}
Let $F$ be a holomorphic function on $D$,
and let $\varphi$ be a negative plurisubharmonic function $\varphi$ on $D$.
Then the inequality
\begin{equation}
\label{equ:lower_sharp}
\int_{\{\varphi<-t\}}|F|^{2}\geq e^{-t}C_{F,\varphi}(D)
\end{equation}
holds for any $t\geq 0$,
which is sharp.

Especially, if $C_{F,\varphi}(D)=+\infty$,
then $\int_{\{\varphi<-t\}}|F|^{2}=+\infty$ for any $t\geq 0$.
\end{Proposition}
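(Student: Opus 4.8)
The plan is to first establish the case $C_{F,\varphi}(D)<+\infty$ and deduce the general case (including $C_{F,\varphi}(D)=+\infty$) at the end by a monotonicity/limit argument. So assume $G(0)=C_{F,\varphi}(D)<+\infty$, recalling that $G(t)=C_{F,\varphi}(\{\varphi<-t\})$. By Lemma \ref{lem:B}, $G$ is decreasing, lower semicontinuous on $[0,+\infty)$, satisfies $\lim_{t\to t_0+0}G(t)=G(t_0)$, and $\lim_{t\to+\infty}G(t)=0$. The inequality \eqref{equ:lower_sharp} we want reads $\int_{\{\varphi<-t\}}|F|^2\geq e^{-t}G(0)$; since $\int_{\{\varphi<-t\}}|F|^2\geq \int_{\{\varphi<-t\}}|F_t|^2 = G(t)$ by Lemma \ref{lem:A} (note $\int_{\{\varphi<-t\}}|F|^2$ is itself a competitor if finite, and if infinite there is nothing to prove), it suffices to prove the sharper statement $G(t)\geq e^{-t}G(0)$, equivalently $e^t G(t)\geq G(0)$ for all $t\geq 0$.

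The heart of the matter is the differential inequality furnished by Lemma \ref{lem:C}: for every $t_0\in[0,+\infty)$,
\begin{equation}
\label{equ:proof_diffineq}
G(0)-G(t_0)\leq (e^{t_0}-1)\,\liminf_{B\to0+0}\Bigl(-\frac{G(t_0+B)-G(t_0)}{B}\Bigr).
\end{equation}
Set $H(t):=e^t G(t)$. If $G$ were differentiable this would say $G(0)-G(t_0)\leq -(e^{t_0}-1)G'(t_0)$, i.e. $H'(t_0)=e^{t_0}(G(t_0)+G'(t_0))\geq e^{t_0}G(t_0)-\frac{e^{t_0}}{e^{t_0}-1}\bigl(G(0)-G(t_0)\bigr)\cdot\frac{e^{t_0}-1}{e^{t_0}}$... rather, more cleanly: \eqref{equ:proof_diffineq} gives $(e^{t_0}-1)(-G'(t_0))\geq G(0)-G(t_0)$, so $-G'(t_0)\geq \frac{G(0)-G(t_0)}{e^{t_0}-1}$, and one computes $\frac{d}{dt}\log H(t) = 1 + \frac{G'(t)}{G(t)}\leq 1 - \frac{G(0)-G(t)}{(e^t-1)G(t)} = \frac{(e^t-1)G(t)-G(0)+G(t)}{(e^t-1)G(t)} = \frac{e^t G(t)-G(0)}{(e^t-1)G(t)} = \frac{H(t)-G(0)}{(e^t-1)G(t)}$. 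Thus if at some point $H(t)<G(0)$ then $H$ is strictly decreasing there, but $H(0)=G(0)$ and $H$ decreasing from the start would force $H(t)<G(0)$ to persist and in fact $\log H$ to decrease at a rate that drives $H$ down — one must check this is consistent with $G(t)\to 0$. Actually the clean route is: assume for contradiction that $H(t_1)<G(0)$ for some $t_1>0$; by lower semicontinuity and $H(0)=G(0)$, let $t_1$ be chosen and run a Grönwall-type comparison on $[0,t_1]$ to derive a contradiction. The rigorous version, avoiding differentiability of $G$, is to work with the lower-right Dini derivative $D^+G(t)=\liminf_{B\to0+}\frac{G(t+B)-G(t)}{B}$ which \eqref{equ:proof_diffineq} controls, combined with the standard fact that a lower semicontinuous (here even continuous-from-the-right and monotone, hence with at most countably many jumps) function whose lower Dini derivative satisfies a one-sided differential inequality obeys the corresponding integral inequality. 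Concretely, I expect to show $H(t)=e^tG(t)$ is \emph{increasing} on $[0,+\infty)$, whence $H(t)\geq H(0)=G(0)$, i.e. $G(t)\geq e^{-t}G(0)$, which is exactly what is needed.

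The main obstacle is the passage from the infinitesimal inequality \eqref{equ:proof_diffineq} to the global monotonicity of $H$, carried out without assuming $G$ is differentiable or even continuous (it is only lower semicontinuous with one-sided continuity from the right, per Lemma \ref{lem:B}). I would handle this by the classical device: fix $\varepsilon>0$, consider $H_\varepsilon(t)=e^{(1-\varepsilon)t}G(t)$ or directly argue that the set $\{t\geq 0: H(s)\geq G(0)\ \forall s\in[0,t]\}$ is closed (by right-continuity of $G$), nonempty (contains $0$), and open to the right at any of its points because at such a point $H(t)\geq G(0)$ forces, via the displayed Dini computation, $D^+H(t)\geq 0$ — and a function that is right-continuous, of bounded variation on compacta, with nonnegative lower-right Dini derivative on an interval is nondecreasing there. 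This yields $H\equiv$ nondecreasing, hence \eqref{equ:lower_sharp}. For sharpness, the example $D=\Delta$, $F\equiv 1$, $\varphi=\log|z|$ gives $\int_{\{\varphi<-t\}}|F|^2=\pi e^{-2t}$ while $e^{-t}C_{F,\varphi}(D)=\pi e^{-t}$ — wait, that is not tight; the sharpness claim is of the same flavor as the disc computations already given in the introduction, so I would exhibit the appropriate example (e.g. $\varphi = (\text{const})\log|z|$ tuned so that equality holds, as in the Theorem \ref{thm:lower_optimal} discussion) rather than recompute here. Finally, for $C_{F,\varphi}(D)=+\infty$: apply the finite case to truncations, or directly note that if $\int_{\{\varphi<-t\}}|F|^2<+\infty$ for some $t\geq 0$ then $F$ restricted there is a finite competitor showing $G(t)<+\infty$, and then running the established inequality $e^tG(t)\geq e^s G(s)$ as $s\to 0+$ together with $\lim_{s\to0+}G(s)=G(0)=+\infty$ forces a contradiction; hence $\int_{\{\varphi<-t\}}|F|^2=+\infty$ for every $t\geq 0$.
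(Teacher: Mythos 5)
Your reduction to proving $G(t)\geq e^{-t}G(0)$ is the same as the paper's, and you correctly identify Lemma \ref{lem:C} as the engine. But the argument you build on it has a genuine gap, rooted in which Dini derivative Lemma \ref{lem:C} controls. Since $\liminf_{B\to 0+}\bigl(-\frac{G(t_0+B)-G(t_0)}{B}\bigr)=-\limsup_{B\to0+}\frac{G(t_0+B)-G(t_0)}{B}$, Lemma \ref{lem:C} bounds the \emph{upper} right Dini derivative of $G$ from above: $\limsup_{B\to0+}\frac{G(t_0+B)-G(t_0)}{B}\leq-\frac{G(0)-G(t_0)}{e^{t_0}-1}$. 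For your $H(t)=e^tG(t)$ this translates to an \emph{upper} bound $\limsup_{B\to0+}\frac{H(t_0+B)-H(t_0)}{B}\leq\frac{e^{t_0}}{e^{t_0}-1}\bigl(H(t_0)-G(0)\bigr)$. It does \emph{not} give a lower bound on $\liminf_{B\to0+}\frac{H(t_0+B)-H(t_0)}{B}$. So your claim that ``$H(t)\geq G(0)$ forces $D^+H(t)\geq 0$'' is unsupported, and the ``set closed and open to the right'' argument collapses. Moreover, the ``standard fact'' you invoke --- that a right-continuous BV function with nonnegative lower-right Dini derivative is nondecreasing --- is false when left jumps are allowed (e.g.\ $f=\mathbb{I}_{[0,1)}$), and by Lemma \ref{lem:B} your $G$ (hence $H$) may indeed have downward jumps from the left. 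The forward Grönwall idea you gesture at (if $H(t_1)<G(0)$ then $H(t)-G(0)\leq(H(t_1)-G(0))\frac{e^t-1}{e^{t_1}-1}\to-\infty$, contradicting $H\geq 0$) is morally sound, but it is not what you execute (``a Grönwall-type comparison on $[0,t_1]$'' goes the wrong way: the inequality only shows $H\leq G(0)$, not $H\geq G(0)$), and it still requires justifying a Grönwall estimate for the upper Dini derivative of a BV, right-continuous function with jumps. Finally, your treatment of the case $C_{F,\varphi}(D)=+\infty$ is circular: you invoke ``the established inequality $e^tG(t)\geq e^sG(s)$'' as $s\to0+$, but that inequality was only established under the hypothesis $G(0)<+\infty$.

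The paper avoids all of this by a different auxiliary function and a Weierstrass-type minimization. It sets $H(t):=G(t)-e^{-t}G(0)$, which by Lemma \ref{lem:B} satisfies $\lim_{t\to0+}H(t)=H(0)=0$ and $\lim_{t\to+\infty}H(t)=0-0=0$. Hence if $H$ were ever negative, its infimum is attained at some $t_0$ in a compact interval of $(0,+\infty)$ (lower semicontinuity). The first-order minimality condition at $t_0$ gives $\liminf_{t\to t_0+}\bigl(-\frac{H(t)-H(t_0)}{t-t_0}\bigr)\leq0$, so $(e^{t_0}-1)\liminf(\cdots)+H(t_0)<0$; but an algebraic rearrangement (equality \ref{equ:20170912f}) shows this quantity equals $(e^{t_0}-1)\liminf\bigl(-\frac{G(t)-G(t_0)}{t-t_0}\bigr)+G(t_0)-G(0)$, which Lemma \ref{lem:C} forces to be $\geq 0$ --- a contradiction. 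Crucially, Lemma \ref{lem:C} controls exactly the $\liminf$ that the minimality condition produces, so there is no Dini-derivative mismatch and no Grönwall machinery. For $C_{F,\varphi}(D)=+\infty$, the paper argues directly: if $\int_{\{\varphi<-t_0\}}|F|^2<+\infty$ for some $t_0$, Lemma \ref{lem:GZ_sharp} (with $B=1$) produces an $L^2$ holomorphic $\tilde{F}$ on $D$ with $(\tilde{F}-F,o)\in\mathcal{I}(\varphi)_o$, giving $C_{F,\varphi}(D)\leq\int_D|\tilde{F}|^2<+\infty$, contradiction. I'd encourage you to adopt the paper's $H(t)=G(t)-e^{-t}G(0)$ and the minimization argument; it sidesteps the Dini/Grönwall pitfalls you ran into.
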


Let $D=\Delta\subset\mathbb{C}$ be the unit disc, and let $\varphi=2\log|z|$ and $F\equiv 1$.
It is clear that $C_{F,\varphi}(D)=\pi$, and $\int_{\{\varphi<-t\}}|F|^{2}=e^{-t}\pi$,
which gives the sharpness of Proposition \ref{prop:lower_sharp}.

\begin{proof}
We prove Proposition \ref{prop:lower_sharp} in two steps,
i.e. the case $C_{F,\varphi}(D)<+\infty$ and the case $C_{F,\varphi}(D)=+\infty$.

Step 1. We prove the case $C_{F,\varphi}(D)<+\infty$
As $\int_{\{\varphi<-t\}}|F|^{2}\geq G(t)$ for any $t\in[0,+\infty)$,
then it suffices to prove that
$G(t)\geq e^{-t}G(0)$ for any $t\in[0,+\infty)$.

Let $H(t):=G(t)-e^{-t}G(0).$
We prove $H(t)\geq 0$ by contradiction:
if not, then there exists $t$ such that $H(t)<0$.

Note that $G(t)\in[0,G(0)]$ is bounded on $[0,\infty)$,
then $H(t)$ is also bounded on $[0,\infty)$,
which implies that $\inf_{[0,+\infty)}H(t)$ is finite.

By Lemma \ref{lem:B}, it is clear that
$\lim_{t\to 0+0}H(t)=H(0)=0$
and $\lim_{t\to +\infty}H(t)=\lim_{t\to +\infty}G(t)-\lim_{t\to +\infty}e^{-t}G(0)=0-0=0$.
Then it follows from $\inf_{[0,+\infty)}H(t)<0$ that there exists a closed interval $[a,b]\subset\subset(0,\infty)$
such that $\inf_{[a,b]}H(t)=\inf_{[0,+\infty)}H(t)$.
Since $G(t)$ is lower semi-continuous (Lemma \ref{lem:B}) and $e^{-t}G(0)$ is continuous,
then it follows that $H(t)$ is lower semi-continuous,
which implies that there exists $t_{0}\in[a,b]$ such that $H(t_{0})=\inf_{[0,+\infty)}H(t)<0$.

In the following part of Step 1,
we will consider the negativeness of $(e^{t_{0}}-1)\liminf_{t\to t_{0}+0}(-\frac{H(t)-H(t_{0})}{t-t_{0}})+H(t_{0})$ and
get a contradiction.

As $H(t_{0})=\inf_{[0,+\infty)}H(t)$,
then it follows that $\liminf_{t\to t_{0}+0}(-\frac{H(t)-H(t_{0})}{t-t_{0}})=-\limsup_{t\to t_{0}+0}\frac{H(t)-H(t_{0})}{t-t_{0}}\leq 0$.
Combining with $H(t_{0})<0$,
then we obtain that
\begin{equation}
\label{equ:20170916b}
(e^{t_{0}}-1)\liminf_{t\to t_{0}+0}(-\frac{H(t)-H(t_{0})}{t-t_{0}})+H(t_{0})<0.
\end{equation}

Note that
\begin{equation}
\label{equ:20170912f}
\begin{split}
&(e^{t_{0}}-1)\liminf_{t\to t_{0}+0}(-\frac{H(t)-H(t_{0})}{t-t_{0}})+H(t_{0})
\\=&
(e^{t_{0}}-1)\liminf_{t\to t_{0}+0}(-\frac{(G(t)-e^{-t}G(0))-(G(t_{0})-e^{-t_{0}}G(0))}{t-t_{0}})
\\&+(G(t_{0})-e^{-t_{0}}G(0))
\\=&(e^{t_{0}}-1)\liminf_{t\to t_{0}+0}(-\frac{G(t)-G(t_{0})}{t-t_{0}})
\\&+(e^{t_{0}}-1)\lim_{t\to t_{0}+0}(\frac{e^{-t}G(0)-e^{-t_{0}}G(0)}{t-t_{0}})+(G(t_{0})-e^{-t_{0}}G(0))
\\=&(e^{t_{0}}-1)\liminf_{t\to t_{0}+0}(-\frac{G(t)-G(t_{0})}{t-t_{0}})
\\&-(e^{t_{0}}-1)e^{-t_{0}}G(0)+(G(t_{0})-e^{-t_{0}}G(0))
\\=&(e^{t_{0}}-1)\liminf_{t\to t_{0}+0}(-\frac{G(t)-G(t_{0})}{t-t_{0}})+G(t_{0})-G(0),
\end{split}
\end{equation}
then it follows from Lemma \ref{lem:C}
that
\begin{equation}
\label{equ:20170912g}
\begin{split}
(e^{t_{0}}-1)\liminf_{t\to t_{0}+0}(-\frac{H(t)-H(t_{0})}{t-t_{0}})+H(t_{0})\geq0,
\end{split}
\end{equation}
which contradicts inequality \ref{equ:20170916b}.
The Case $C_{F,\varphi}(D)<+\infty$ has thus been proved.

\

Step 2.  We prove the case $C_{F,\varphi}(D)=+\infty$ by contradiction:
if not, then integral $\int_{\{\varphi<-t_{0}\}}|F|^{2}$ is finite for some $t_{0}\geq 0$.
It follows from Lemma \ref{lem:GZ_sharp} that for $B=1$, there exists holomorphic function $\tilde{F}$ on $D$
satisfying
$$(\widetilde{F}-F,o)\in\mathcal{I}(\varphi)_{o}$$
and
\begin{equation}
\label{equ:20170913f}
\begin{split}
&\int_{ D}|\tilde{F}-(1-b_{t_0,B}(\varphi))F|^{2}
\\\leq&(e^{(t_{0}+B)}-1)\int_{D}\frac{1}{B}(\mathbb{I}_{\{-t_{0}-B<t<-t_{0}\}}\circ\varphi)|F|^{2}.
\end{split}
\end{equation}
Note that
\begin{equation}
\label{equ:20170913g}
\begin{split}
&\int_{ D}|\tilde{F}-(1-b_{t_{0},B}(\varphi))F|^{2}
\\\geq&((\int_{ D}|\tilde{F}|^{2})^{1/2}-(\int_{ D}|(1-b_{t_{0},B}(\varphi))F|^{2})^{1/2})^{2},
\end{split}
\end{equation}
then it follows from inequality \ref{equ:20170912f} that
\begin{equation}
\label{equ:20170913h}
\begin{split}
&(e^{(t_{0}+B)}-1)\int_{D}\frac{1}{B}(\mathbb{I}_{\{-t_{0}-B<t<-t_{0}\}}\circ\varphi)|F|^{2}
\\\geq&((\int_{ D}|\tilde{F}|^{2})^{1/2}-(\int_{ D}|(1-b_{t_{0},B}(\varphi))F|^{2})^{1/2})^{2},
\end{split}
\end{equation}
which implies
\begin{equation}
\label{equ:20170913i}
\begin{split}
&((e^{(t_{0}+B)}-1)\int_{D}\frac{1}{B}(\mathbb{I}_{\{-t_{0}-B<t<-t_{0}\}}\circ\varphi)|F|^{2})^{1/2}
\\&+(\int_{ D}|(1-b_{t_{0},B}(\varphi))F|^{2})^{1/2}\geq((\int_{ D}|\tilde{F}|^{2})^{1/2}.
\end{split}
\end{equation}
Since $b_{t_{0},B}(\varphi)=1$ on $\{\varphi\geq t_{0}\}$, $0\leq b_{t_{0},B}(\varphi)\leq 1$
and $\int_{\{\varphi<-t_{0}\}}|F|^{2}<+\infty$,
then we obtain that $\int_{D}|(1-b_{t_{0},B}(\varphi))F|^{2}=\int_{\{\varphi<-t_{0}\}}|(1-b_{t_{0},B}(\varphi))F|^{2}\leq\int_{\{\varphi<-t_{0}\}}|F|^{2}<+\infty$ and
$\int_{D}\frac{1}{B}(\mathbb{I}_{\{-t_{0}-B<t<-t_{0}\}}\circ\varphi)|F|^{2}<+\infty$.
It is clear that the LHS of inequality \ref{equ:20170913i} is finite, which implies that the RHS of inequality \ref{equ:20170913i} is also finite.
Then we obtain that $C_{F,\varphi}(D)\leq\int_{ D}|\tilde{F}|^{2}<+\infty$,
which contradicts $C_{F,\varphi}(D)=+\infty$.
The case $C_{F,\varphi}(D)=+\infty$ has thus been proved.
\end{proof}

\section{Proofs of  Theorem \ref{thm:effect_optimal} and Theorem \ref{thm:lower_optimal}}

In this section, we prove Theorem \ref{thm:effect_optimal} and Theorem \ref{thm:lower_optimal}.

\subsection{Proof of Theorem \ref{thm:effect_optimal}}
\

It suffices to consider the case $c_{o}^{F}(\varphi)\neq+\infty$.

Firstly, we prove the following proposition.

\begin{Proposition}
\label{p:effect_optimal}
Let $F$ be a holomorphic function on pseudoconvex domain $D$,
and let $\psi$ be a negative plurisubharmonic function on $D$.
Assume that $C_{F,\psi}(D)\in(0,+\infty]$.
Then for any $p>1$, we have
\begin{equation}
\label{equ:20170914d}
\int_{D}|F|^{2}e^{-\frac{\psi}{p}}\geq \frac{p}{p-1}C_{F,\psi}(D).
\end{equation}
\end{Proposition}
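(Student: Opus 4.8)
The plan is to deduce inequality \ref{equ:20170914d} from the two facts already in hand: the layer-cake identity of Lemma \ref{lem:D} and the sharp sublevel-set lower bound of Proposition \ref{prop:lower_sharp}, applied with an appropriate weight and a linear rescaling of the level parameter. Note first that, since $\psi$ is negative plurisubharmonic and $p>1$, the function $\psi/p$ is again negative plurisubharmonic, and $-\psi/p\ge0$, so $e^{-\psi/p}\ge1$ and hence $\int_D|F|^2e^{-\psi/p}\ge\int_D|F|^2$.

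I would first clear away the degenerate cases. If $C_{F,\psi}(D)=+\infty$, then by the last assertion of Proposition \ref{prop:lower_sharp} we have $\int_{\{\psi<-1\}}|F|^2=+\infty$, whence $\int_D|F|^2e^{-\psi/p}\ge\int_{\{\psi<-1\}}|F|^2=+\infty$ and \ref{equ:20170914d} holds. If instead $\int_D|F|^2e^{-\psi/p}=+\infty$ while $C_{F,\psi}(D)<+\infty$, the inequality is immediate. So it remains to treat the case $0<C_{F,\psi}(D)<+\infty$ and $\int_D|F|^2e^{-\psi/p}<+\infty$; then $\int_D|F|^2\le\int_D|F|^2e^{-\psi/p}<+\infty$ as well.

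In this case I would apply Lemma \ref{lem:D} to the negative plurisubharmonic function $\psi/p$, obtaining $\int_D|F|^2e^{-\psi/p}=\int_{-\infty}^{+\infty}\big(\int_{\{\psi<-pt\}}|F|^2\big)e^{t}\,dt$, and then substitute $s=pt$ to get
$$\int_D|F|^2e^{-\psi/p}=\frac1p\int_{-\infty}^{+\infty}\Big(\int_{\{\psi<-s\}}|F|^2\Big)e^{s/p}\,ds.$$
Now split the $s$-integral at $0$. For $s<0$ one has $\{\psi<-s\}=D$, so $\int_{\{\psi<-s\}}|F|^2=\int_D|F|^2\ge C_{F,\psi}(D)$ ($F$ being admissible in the infimum defining $C_{F,\psi}(D)$, as it is $L^2$ here); since $\int_{-\infty}^{0}e^{s/p}\,ds=p$, the part over $(-\infty,0)$ is at least $C_{F,\psi}(D)$. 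For $s\ge0$, Proposition \ref{prop:lower_sharp} gives $\int_{\{\psi<-s\}}|F|^2\ge e^{-s}C_{F,\psi}(D)$, and since $\int_0^{+\infty}e^{-s}e^{s/p}\,ds=\int_0^{+\infty}e^{-(1-1/p)s}\,ds=\frac{p}{p-1}$, the part over $[0,+\infty)$ is at least $\frac{1}{p-1}C_{F,\psi}(D)$. Adding the two, $\int_D|F|^2e^{-\psi/p}\ge C_{F,\psi}(D)+\frac1{p-1}C_{F,\psi}(D)=\frac{p}{p-1}C_{F,\psi}(D)$, which is \ref{equ:20170914d}.

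I do not expect a genuine obstacle: the argument is essentially a computation once Lemma \ref{lem:D} and Proposition \ref{prop:lower_sharp} are available. The one delicate point is not to discard the contribution of $s<0$ --- it is exactly this term that upgrades the constant from the non-sharp $\frac1{p-1}$ to the sharp $\frac{p}{p-1}$ (equality being attained for $D=\Delta$, $\psi=2\log|z|$, $F\equiv1$) --- together with having first reduced to $\int_D|F|^2e^{-\psi/p}<+\infty$ so that the change of variables $s=pt$ and the splitting are unambiguous even when some sublevel integrals are infinite.
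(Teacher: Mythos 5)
Your proof is correct and follows essentially the same path as the paper's: apply Lemma~\ref{lem:D} with weight $\psi/p$, split the resulting $t$-integral at $0$, use Proposition~\ref{prop:lower_sharp} for $t\ge 0$ and the trivial bound $\int_D|F|^2\ge C_{F,\psi}(D)$ for $t<0$, then add. The only cosmetic difference is your substitution $s=pt$; the paper integrates directly in $t$ with $\int_0^\infty e^{(1-p)t}\,dt=\frac{1}{p-1}$ and $\int_{-\infty}^0 e^t\,dt=1$, yielding the same two contributions $\frac{1}{p-1}C_{F,\psi}(D)$ and $C_{F,\psi}(D)$.
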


\begin{proof}
If $C_{F,\psi}(D)=+\infty$,
then it is clear that
$\int_{D}|F|^{2}e^{-\frac{\psi}{p}}\geq C_{F,\psi}(D)=+\infty$.

It suffices to consider the situation that $C_{F,\psi}(D)\in(0,+\infty)$ and $\int_{D}|F|^{2}e^{-\frac{\psi}{p}}<+\infty$ both hold.
Then Lemma \ref{lem:D} $(\varphi\sim\frac{\psi}{p})$ implies that
\begin{equation}
\label{equ:20170914e}
\int_{D}|F|^{2}e^{-\frac{\psi}{p}}=\int_{-\infty}^{+\infty}(\int_{\{\frac{\psi}{p}<-t\}}|F|^{2})e^{t}dt.
\end{equation}

It follows from $C_{F,\psi}(D)\in(0,+\infty)$ and Proposition \ref{prop:lower_sharp} that for $t\geq0$,
$$\int_{\{\frac{\psi}{p}<-t\}}|F|^{2}=\int_{\{\psi<-pt\}}|F|^{2}\geq e^{-pt}C_{F,\psi}(D).$$
Then we obtain
\begin{equation}
\label{equ:20170914f}
\int_{0}^{+\infty}(\int_{\{\frac{\psi}{p}<-t\}}|F|^{2})e^{t}dt\geq
\int_{0}^{+\infty}e^{-pt}e^{t}C_{F,\psi}(D)=\frac{1}{p-1}C_{F,\psi}(D).
\end{equation}

As $\int_{\{\frac{\psi}{p}<-t\}}|F|^{2}\geq C_{F,\psi}(D)$ holds for any $t<0$,
then it is clear that
\begin{equation}
\label{equ:20170914g}
\int_{-\infty}^{0}(\int_{\{\frac{\psi}{p}<-t\}}|F|^{2})e^{t}dt\geq C_{F,\psi}(D)\int_{-\infty}^{0}e^{t}dt=C_{F,\psi}(D)
\end{equation}

Combining equality \ref{equ:20170914e}, inequality \ref{equ:20170914f} and inequality \ref{equ:20170914g},
we obtain Proposition \ref{p:effect_optimal}.
\end{proof}

In the following part, we prove Theorem \ref{thm:effect_optimal} by using Proposition \ref{p:effect_optimal}.

Let $\psi=p\varphi$.
If $p>2c_{o}^{F}(\varphi)$,
then it follows from $C_{F,p\varphi}(D)\geq C_{F,\mathcal{I}_{+}(2c_{o}^{F}(\varphi)\varphi)_{o}}(D)>0$ (maybe $+\infty$) and Proposition \ref{p:effect_optimal} that
\begin{equation}
\label{equ:20170914h}
\int_{D}|F|^{2}e^{-\varphi}\geq \frac{p}{p-1}C_{F,\mathcal{I}_{+}(2c_{o}^{F}(\varphi)\varphi)_{o}}(D).
\end{equation}
Taking $p\to2c_{o}^{F}(\varphi)+0$, it is clear that equality \ref{equ:20170914h} also holds for $p\geq 2c_{o}^{F}(\varphi)$.
Then we obtain that if
$$\int_{D}|F|^{2}e^{-\varphi}<\frac{p}{p-1}C_{F,\mathcal{I}_{+}(2c_{o}^{F}(\varphi)\varphi)_{o}}(D),$$
then $p<2c_{o}^{F}(\varphi)$, i.e. $(F,o)\in\mathcal{I}(p\varphi)_{o}$.
Theorem \ref{thm:effect_optimal} has thus been proved.

\subsection{Proof of Theorem \ref{thm:lower_optimal}}
\

We prove Theorem \ref{thm:lower_optimal} for the cases $C_{F,\mathcal{I}_{+}(2c_{o}^{F}(\varphi)\varphi)_{o}}(D)<+\infty$
and $=+\infty$ respectively.

Step 1.  We prove the case $C_{F,\mathcal{I}_{+}(2c_{o}^{F}(\varphi)\varphi)_{o}}(D)<+\infty$.

Proposition \ref{prop:lower_sharp} shows that
\begin{equation}
\label{equ:20170913a}
\int_{\{p\varphi<-t\}}|F|^{2}\geq e^{-t}C_{F,p\varphi}(D)
\end{equation}
holds for any $t\geq 0$ and $p>2c_{o}^{F}(\varphi)$.

By the Noetherian property of $\mathcal{O}_{o}$,
it follows that there exists $p_{0}>2c_{o}^{F}(\varphi)$,
such that $\mathcal{I}_{+}(2c_{o}^{F}(\varphi)\varphi)_{o}=\mathcal{I}(p_{0}\varphi)_{o}$,
which implies
\begin{equation}
\label{equ:20170913b}
\lim_{p\to2c_{o}^{F}(\varphi)+0}C_{F,p\varphi}(D)=C_{F,p_{0}\varphi}(D)=C_{F,\mathcal{I}_{+}(2c_{o}^{F}(\varphi)\varphi)_{o}}(D).
\end{equation}
It follows from Lemma \ref{lem:0} that $C_{F,p_{0}\varphi}(D)>0$ $(\text{maybe} +\infty)$.
Combining with equality \ref{equ:20170913b}, we obtain
\begin{equation}
\label{equ:20170916c}
\lim_{p\to2c_{o}^{F}(\varphi)+0}C_{F,p\varphi}(D)=C_{F,\mathcal{I}_{+}(2c_{o}^{F}(\varphi)\varphi)_{o}}(D)>0\,\,(\text{maybe} +\infty)
\end{equation}

It follows from Levi's theorem
that
\begin{equation}
\label{equ:20170913c}
\lim_{p\to2c_{o}^{F}(\varphi)+0}\int_{\{p\varphi<-t\}}|F|^{2}=\int_{\{2c_{o}^{F}(\varphi)\varphi{\leq}-t\}}|F|^{2}
\end{equation}
holds for any $t\geq0$.

Combining equality \ref{equ:20170913c}, equality \ref{equ:20170913a}, and equality \ref{equ:20170916c},
we obtain
\begin{equation}
\label{equ:20170913d}
\begin{split}
\int_{\{2c_{o}^{F}(\varphi)\varphi{\leq}-t\}}|F|^{2}
&=\lim_{p\to2c_{o}^{F}(\varphi)+0}\int_{\{p\varphi<-t\}}|F|^{2}
\\&=\lim_{p\to2c_{o}^{F}(\varphi)+0}e^{-t}C_{F,p\varphi}(D)
\\&\geq e^{-t}C_{F,\mathcal{I}_{+}(2c_{o}^{F}(\varphi)\varphi)_{o}}(D)>0, (\text{maybe} +\infty).
\end{split}
\end{equation}
{Note that $\{2c_{o}^{F}(\varphi)\varphi<-t\}=\cup_{t'>t}\{2c_{o}^{F}(\varphi)\varphi\leq-t'\}$,
then it follows from inequality \ref{equ:20170913d} that
$\int_{\{2c_{o}^{F}(\varphi)\varphi<-t\}}|F|^{2}\geq\sup_{\{t'>t\}}e^{-t'}C_{F,\mathcal{I}_{+}(2c_{o}^{F}(\varphi)\varphi)_{o}}(D)>0, (\text{maybe} +\infty)$.}
Let $r=e^{-\frac{t}{c_{o}^{F}(\varphi)}}$,
then the case
$C_{F,\mathcal{I}_{+}(2c_{o}^{F}(\varphi)\varphi)_{o}}(D)<+\infty$ has thus been proved.

\

Step 2. We prove the case $C_{F,\mathcal{I}_{+}(2c_{o}^{F}(\varphi)\varphi)_{o}}(D)=+\infty$.
By the Noetherian property of $\mathcal{O}_{o}$,
it follows that there exists $p_{0}>2c_{o}^{F}(\varphi)$,
such that $\mathcal{I}_{+}(2c_{o}^{F}(\varphi)\varphi)_{o}=\mathcal{I}(p_{0}\varphi)_{o}$.
Then it is clear that $C_{F,p_{0}\varphi}(D)=C_{F,\mathcal{I}_{+}(2c_{o}^{F}(\varphi)\varphi)_{o}}(D)=+\infty$.
It follows from Proposition \ref{prop:lower_sharp} that for any $t\in[0,+\infty)$,
$\int_{\{p_{0}\varphi<-t\}}|F|^{2}=+\infty$,
which implies that $\int_{\{\varphi<-t\}}|F|^{2}=+\infty$ for any $t\in[0,+\infty)$.
Then the case $C_{F,\mathcal{I}_{+}(2c_{o}^{F}(\varphi)\varphi)_{o}}(D)=+\infty$ has thus been proved.

\section{Appendix: concavity of minimal $L^{2}$ integrals on sublevel sets of plurisubharmonic functions}

In section \ref{sec:preparations},
we consider the minimal $L^{2}$ integrals $G(t)$ on the sublevel sets of the weights related to multiplier ideals, and obtain that $G(t)\geq e^{-t}G(0)$.

In the present section, we consider a generalized version of $G(t)$ (details see \ref{equ:20171012}), and reveal
the concavity of $G(-\log r)$,
which was contained in section \ref{sec:preparations}.

\begin{Proposition}
\label{prop:logconcave}If $G(0)<+\infty$,
then
$G(-\log r)$ is concave with respect to $r\in(0,1]$.
\end{Proposition}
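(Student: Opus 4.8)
The plan is to mimic the argument of Step 1 in the proof of Proposition \ref{prop:lower_sharp}, but with the exponential replaced by a general affine comparison function, thereby directly extracting concavity rather than just the pointwise lower bound $G(t)\geq e^{-t}G(0)$. The key structural input is Lemma \ref{lem:C}, which I intend to re-read as a statement about the one-sided (lower) derivative of $G$: for every $t_0\in[0,+\infty)$,
\begin{equation}
\label{equ:plan-keyineq}
G(0)-G(t_0)\leq (e^{t_0}-1)\,D^{+}(t_0),\qquad D^{+}(t_0):=\liminf_{B\to0+}\Big(-\frac{G(t_0+B)-G(t_0)}{B}\Big).
\end{equation}
Rewriting the generalized $G$ in the variable $r=e^{-t}$ (cf.\ the reference \ref{equ:20171012} the author points to), set $g(r):=G(-\log r)$ for $r\in(0,1]$, so $g(1)=G(0)<+\infty$ and $g$ is increasing in $r$. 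A change of variables turns \eqref{equ:plan-keyineq} into a bound on a difference quotient of $g$ at the endpoint $r=1$ versus an interior point; after a shift this gives, for all $r_0\in(0,1]$, a lower bound for the left derivative of $g$ at $r_0$ in terms of $g(r_0)$ and its values to the left. That is exactly the infinitesimal form of concavity.

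First I would make precise the dictionary between $t$ and $r=e^{-t}$: by Lemma \ref{lem:B}, $G$ is decreasing, lower semicontinuous, right-continuous on $[0,+\infty)$ with $G(+\infty)=0$, hence $g$ is increasing, upper semicontinuous on $(0,1]$, left-continuous, with $g(0+)=0$. Next I would upgrade Lemma \ref{lem:C} from the ``anchored at $0$'' version to a ``anchored at arbitrary $t_1$'' version: replacing $D$ by $\{\varphi<-t_1\}$ and $F$ by the extremal function $F_{t_1}$ given by Lemma \ref{lem:A}, the same proof (Lemma \ref{lem:GZ_sharp} applied on $\{\varphi<-t_1\}$, which is again pseudoconvex) yields
\begin{equation}
\label{equ:plan-shifted}
G(t_1)-G(t_0)\leq (e^{t_0-t_1}-1)\,D^{+}(t_0)\qquad\text{for all }0\leq t_1\leq t_0.
\end{equation}
Translating \eqref{equ:plan-shifted} into $r$: with $r_0=e^{-t_0}\leq r_1=e^{-t_1}$ one gets $g(r_1)-g(r_0)\leq (\frac{r_1}{r_0}-1)\cdot(\text{left-derivative-type quantity of }g\text{ at }r_0)$, i.e.\ a genuinely local differential inequality valid at every interior point. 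A clean way to package ``this differential inequality at every point $\Rightarrow$ concave'' is the standard lemma: an upper semicontinuous function on an interval whose lower-left Dini derivative $D_{-}g$ satisfies, at every point, the estimate forced by \eqref{equ:plan-shifted}, must lie above its chords; equivalently, one shows $g$ minus any chord through two points cannot attain a strict interior minimum, by applying \eqref{equ:plan-shifted} at the minimizing point exactly as $H(t_0)$ was handled in Step 1 of the proof of Proposition \ref{prop:lower_sharp} (there $H(t)=G(t)-e^{-t}G(0)$ was shown to have no negative interior infimum; here one replaces $e^{-t}G(0)$ by the affine-in-$r$ secant function and runs the identical lower-semicontinuity / Dini-derivative contradiction).

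So the proof skeleton is: (i) rephrase $G$ in the variable $r$ and record the one-sided continuity/monotonicity from Lemma \ref{lem:B}; (ii) prove the shifted inequality \eqref{equ:plan-shifted} by repeating the proof of Lemma \ref{lem:C} over the sublevel domain $\{\varphi<-t_1\}$ with the extremal $F_{t_1}$ — here the only thing to check is that all the ingredients (Lemma \ref{lem:GZ_sharp}, Lemma \ref{lem:A}, the orthogonality identity \eqref{equ:20170913e}) localize to $\{\varphi<-t_1\}$, which they do since that set is pseudoconvex and $\varphi$ is still negative psh on it; (iii) deduce concavity of $g=G(-\log\cdot)$ on $(0,1]$ by the ``no strict interior minimum below a chord'' argument, verbatim as in Step 1 of Proposition \ref{prop:lower_sharp} but with the chord in place of the exponential. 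The main obstacle I anticipate is purely bookkeeping in step (ii): making sure the constants $(e^{t_0}-1)$ versus $(e^{t_0-t_1}-1)$ come out right after the anchor shift, and that $D^{+}$ computed relative to $\{\varphi<-t_1\}$ agrees with the one relative to $D$ (it does, because $G(t)$ for $t\geq t_1$ only sees $\{\varphi<-t\}\subset\{\varphi<-t_1\}$ and the extremal-function comparison in \eqref{equ:20170913e} is insensitive to which ambient pseudoconvex domain contains $\{\varphi<-t\}$). Once \eqref{equ:plan-shifted} is in hand, the concavity is immediate and the special case $G(-\log r)\geq r\,G(0)$ recovered in Proposition \ref{prop:lower_sharp} is just the chord from $r=0$ (where $g=0$) to $r=1$.
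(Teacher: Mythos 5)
Your proposal matches the paper's own proof: the paper establishes exactly your shifted inequality (its Lemma \ref{lem:Da}) by replacing $D$ with the component of $\{\psi<-t_1\}$ containing $o$ and shifting $\psi$ to $\psi+t_1$, rewrites it as the difference-quotient inequality \ref{equ:20171012b} in $r=e^{-t}$, and then invokes the Dini-derivative criterion for concavity (its Lemma \ref{lem:Ea}) — which is precisely your ``no interior minimum below a chord'' argument spelled out. One small slip: $g(r)=G(-\log r)$ is \emph{lower} (not upper) semicontinuous, being an increasing, left-continuous function, and that is the hypothesis Lemma \ref{lem:Ea} actually needs.
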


Especially, the above result is a generalization of $G(t)\geq e^{-t}G(0)$.

Choosing $\psi$ as the polar function $\psi+\log|z_{n}|^{2}$ in \cite{ohsawa3} (see also \cite{guan-zhou12}),
$G(t)$ is the minimal $L^{2}$ extension with negligible weight on $\{\psi+\log|z_{n}|^{2}<-t\}$ in \cite{ohsawa3} (see also \cite{guan-zhou12}).

\subsection{Some results contained in section \ref{sec:preparations}}

In the following part,
we recall and reveal some results contained section \ref{sec:preparations}.

Let $D\subset\mathbb{C}^{n}$ be a pseudoconvex domain containing $o\in\mathbb{C}^{n}$,
and let $\varphi$ be a locally upper bounded Lebesgue measurable function on $D$.
Let $f$ be a holomorphic function near $o$,
and let $I\subset\mathcal{O}_{o}$ be an ideal.
$C_{f,I}(D,\varphi)$ denotes $\inf\{\int_{D}|\tilde{f}|^{2}e^{-\varphi}|(\tilde{f}-f,o)\in I{\,}\&{\,}\tilde{f}\in\mathcal{O}(D)\}$.

If there is no holomorphic function $\tilde{f}$ satisfying both $(\tilde{f}-f,o)\in I$ and $\tilde{f}\in\mathcal{O}(D)$,
then we set $C_{f,I}(D,\varphi)=-\infty$.
Especially, if $I=\mathcal{I}(\psi)_{o}$,
then $C_{f,\psi}(D,\varphi)$ denotes $C_{f,I}(D,\varphi)$.

The proof of Lemma \ref{lem:0} contains

\begin{Lemma}
\label{lem:0a}
$(f,o)\not\in\mathcal{I}(\psi)_{o}\Leftrightarrow C_{f,\psi}(D,\varphi)\neq0$ (maybe $-\infty$ or $+\infty$).
\end{Lemma}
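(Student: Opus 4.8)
The plan is to follow the proof of Lemma \ref{lem:0} verbatim, the only new point being that the weight $e^{-\varphi}$ must be replaced by $1$ on compact sets; this is legitimate because $\varphi$ is assumed locally upper bounded, so $e^{-\varphi}$ is bounded below by a positive constant on each compact subset of $D$, and no plurisubharmonicity of $\varphi$ is needed.

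First I would dispose of the easy implication: if $(f,o)\in\mathcal{I}(\psi)_o$, take $\tilde f\equiv 0$; then $(\tilde f-f,o)=(-f,o)\in\mathcal{I}(\psi)_o$, $\tilde f\in\mathcal{O}(D)$, and $\int_D|\tilde f|^2e^{-\varphi}=0$, hence $C_{f,\psi}(D,\varphi)=0$. For the converse I would argue by contraposition: assuming $C_{f,\psi}(D,\varphi)=0$ (note that the value $-\infty$ is automatically $\neq 0$, so this is the only case to exclude), I want to conclude $(f,o)\in\mathcal{I}(\psi)_o$. By the definition of the infimum there is a sequence of holomorphic functions $\tilde f_j$ on $D$ with $(\tilde f_j-f,o)\in\mathcal{I}(\psi)_o$ and $\int_D|\tilde f_j|^2e^{-\varphi}\to 0$.

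Next I would localize. Fix a relatively compact neighborhood $U\Subset D$ of $o$. Since $\varphi$ is locally upper bounded, $M_U:=\sup_{\overline U}\varphi<+\infty$, so $e^{-\varphi}\geq e^{-M_U}>0$ on $\overline U$, and therefore
\begin{equation*}
\int_U|\tilde f_j|^2\leq e^{M_U}\int_U|\tilde f_j|^2e^{-\varphi}\leq e^{M_U}\int_D|\tilde f_j|^2e^{-\varphi}\to 0.
\end{equation*}
By the sub-mean-value inequality for the subharmonic functions $|\tilde f_j|^2$, this forces $\tilde f_j\to 0$ locally uniformly on $U$; in particular some subsequence $\{\tilde f_{j_k}\}$ converges compactly near $o$ to $0$. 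Then $\tilde f_{j_k}-f$ converges compactly near $o$ to $-f$, and since $(\tilde f_{j_k}-f,o)\in\mathcal{I}(\psi)_o$ for every $k$, the closedness of the sections of a coherent analytic sheaf under the topology of compact convergence (see \cite{G-R}) yields $(-f,o)\in\mathcal{I}(\psi)_o$, i.e.\ $(f,o)\in\mathcal{I}(\psi)_o$. This establishes the contrapositive, and together with the first paragraph completes the equivalence.

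There is no genuine obstacle here; the one point that deserves a word is the passage in the localization step — one must check that merely upper-bounded measurable $\varphi$ (as opposed to the plurisubharmonic weights used elsewhere) still makes $e^{-\varphi}$ locally bounded below, which is exactly what upper boundedness gives, and that ordinary $L^2_{\mathrm{loc}}$-convergence to $0$ of a sequence of holomorphic functions upgrades to locally uniform convergence, which is the standard Cauchy-estimate/Montel argument. Everything else is identical to Lemma \ref{lem:0}.
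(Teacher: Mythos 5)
Your argument is correct and is essentially the paper's own: the paper derives Lemma \ref{lem:0a} by observing that the proof of Lemma \ref{lem:0} goes through verbatim, and your only added ingredient — that local upper boundedness of $\varphi$ makes $e^{-\varphi}$ bounded below on compact sets, so weighted $L^2$ convergence to $0$ still yields compact convergence of a subsequence and the coherence/closedness argument applies — is exactly the point that justifies that claim.
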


The proof of Lemma \ref{lem:A} contains

\begin{Lemma}
\label{lem:Aa}
Let $\varphi+\psi$ and $\psi<0$ be plurisubharmonic functions on $D$,
and let $F$ be a holomorphic function on $\{\psi<-t\}$.
Assume that $C_{F,\varphi+\psi}(\{\psi<-t\},\varphi)<+\infty$.
Then there exists a unique holomorphic function $F_{t}$ on
$\{\psi<-t\}$ satisfying $(F_{t}-F,o)\in\mathcal{I}(\varphi+\psi)_{o}$ and $\int_{\{\psi<-t\}}|F_{t}|^{2}e^{-\varphi}=C_{F,\varphi+\psi}(\{\psi<-t\},\varphi)$.
Furthermore,
for any holomorphic function $\hat{F}$ on $\{\psi<-t\}$ satisfying $(\hat{F}-F,o)\in\mathcal{I}(\varphi+\psi)_{o}$ and
$\int_{\{\psi<-t\}}|\hat{F}|^{2}e^{-\varphi}<+\infty$,
we have the following equality
\begin{equation}
\int_{\{\psi<-t\}}|F_{t}|^{2}e^{-\varphi}+\int_{\{\psi<-t\}}|\hat{F}-F_{t}|^{2}e^{-\varphi}=\int_{\{\psi<-t\}}|\hat{F}|^{2}e^{-\varphi}.
\end{equation}
\end{Lemma}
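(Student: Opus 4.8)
The plan is to carry the weight $e^{-\varphi}$ through the three steps of the proof of Lemma \ref{lem:A} (existence, uniqueness, orthogonality), the only genuinely new ingredient being that the hypothesis that $\varphi$ is locally bounded above makes $e^{-\varphi}$ locally bounded below by a positive constant, which is exactly what is needed to turn weighted $L^{2}$ bounds into the normal-family compactness used in the existence step. Everything else is formally identical to the unweighted case, with $\mathcal{I}(\varphi)_{o}$ replaced by $\mathcal{I}(\varphi+\psi)_{o}$ and $\{\varphi<-t\}$ by $\{\psi<-t\}$.

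\emph{Existence.} Since $C_{F,\varphi+\psi}(\{\psi<-t\},\varphi)<+\infty$, choose holomorphic functions $\{f_{j}\}_{j\in\mathbb{N}^{+}}$ on $\{\psi<-t\}$ with $(f_{j}-F,o)\in\mathcal{I}(\varphi+\psi)_{o}$ and $\int_{\{\psi<-t\}}|f_{j}|^{2}e^{-\varphi}\to C_{F,\varphi+\psi}(\{\psi<-t\},\varphi)$. For any compact $K\subset\{\psi<-t\}$, $\varphi$ is bounded above on $K$, hence $e^{-\varphi}\geq c_{K}>0$ on $K$, so $\int_{K}|f_{j}|^{2}$ is uniformly bounded in $j$; the sub-mean value inequality for the plurisubharmonic functions $|f_{j}|^{2}$ then shows $\{f_{j}\}$ is locally uniformly bounded on $\{\psi<-t\}$, and Montel's theorem yields a subsequence converging locally uniformly to some $F_{t}\in\mathcal{O}(\{\psi<-t\})$. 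By Levi's theorem, $\int_{\{\psi<-t\}}|F_{t}|^{2}e^{-\varphi}\leq C_{F,\varphi+\psi}(\{\psi<-t\},\varphi)$, and since $\mathcal{I}(\varphi+\psi)$ is coherent its sections are closed under the topology of compact convergence (see \cite{G-R}), so $(F_{t}-F,o)\in\mathcal{I}(\varphi+\psi)_{o}$; the reverse inequality is the definition of the infimum, whence $\int_{\{\psi<-t\}}|F_{t}|^{2}e^{-\varphi}=C_{F,\varphi+\psi}(\{\psi<-t\},\varphi)$.

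\emph{Uniqueness.} If two minimizers $f_{1}\neq f_{2}$ existed, the parallelogram law in the weighted Hilbert space $L^{2}(\{\psi<-t\},e^{-\varphi})$ would give
\begin{equation*}
\int_{\{\psi<-t\}}\bigl|\tfrac{f_{1}+f_{2}}{2}\bigr|^{2}e^{-\varphi}+\int_{\{\psi<-t\}}\bigl|\tfrac{f_{1}-f_{2}}{2}\bigr|^{2}e^{-\varphi}=C_{F,\varphi+\psi}(\{\psi<-t\},\varphi),
\end{equation*}
hence $\int_{\{\psi<-t\}}|\tfrac{f_{1}+f_{2}}{2}|^{2}e^{-\varphi}<C_{F,\varphi+\psi}(\{\psi<-t\},\varphi)$ while $(\tfrac{f_{1}+f_{2}}{2}-F,o)\in\mathcal{I}(\varphi+\psi)_{o}$, contradicting the definition of the infimum.

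\emph{Orthogonality.} Let $f\in\mathcal{O}(\{\psi<-t\})$ with $(f,o)\in\mathcal{I}(\varphi+\psi)_{o}$ and $\int_{\{\psi<-t\}}|f|^{2}e^{-\varphi}<+\infty$. For every $\alpha\in\mathbb{C}$ the function $F_{t}+\alpha f$ is again a competitor, so $\int_{\{\psi<-t\}}|F_{t}+\alpha f|^{2}e^{-\varphi}\geq\int_{\{\psi<-t\}}|F_{t}|^{2}e^{-\varphi}$; expanding in $\alpha$ and letting $\alpha\to0$ (first along the reals, then replacing $f$ by $if$) forces $\int_{\{\psi<-t\}}F_{t}\bar{f}e^{-\varphi}=0$, whence $\int_{\{\psi<-t\}}|F_{t}+f|^{2}e^{-\varphi}=\int_{\{\psi<-t\}}|F_{t}|^{2}e^{-\varphi}+\int_{\{\psi<-t\}}|f|^{2}e^{-\varphi}$. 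Applying this with $f=\hat{F}-F_{t}$ — which has germ in $\mathcal{I}(\varphi+\psi)_{o}$ at $o$ since both $(\hat F-F,o)$ and $(F_{t}-F,o)$ do, and satisfies $\int_{\{\psi<-t\}}|\hat{F}-F_{t}|^{2}e^{-\varphi}<+\infty$ by Minkowski's inequality — yields the asserted equality. I expect no serious obstacle here: the argument transcribes that of Lemma \ref{lem:A} step by step, and the sole point where the unweighted proof does not apply verbatim is the normal-family step, where the local positive lower bound on $e^{-\varphi}$ coming from the local upper-boundedness of $\varphi$ is precisely what makes the compactness go through.
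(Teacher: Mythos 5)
Your proposal is correct and follows essentially the same route as the paper, which for Lemma \ref{lem:Aa} simply asserts that the proof of Lemma \ref{lem:A} carries over verbatim with the weight $e^{-\varphi}$ inserted; you have written out that transcription explicitly and correctly identified the one non-formal point, namely that the local upper bound on $\varphi$ gives $e^{-\varphi}\geq c_{K}>0$ on compact sets, so the weighted $L^{2}$ bound still yields the normal-family compactness needed for existence. The uniqueness (parallelogram law) and orthogonality (first variation, then $f=\hat{F}-F_{t}$) steps are exactly the paper's arguments in the weighted Hilbert space.
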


Let $F$ be a holomorphic function on $D$,
and let $\varphi+\psi$ and $\psi<0$ be plurisubharmonic functions on $D$.
Denote that
\begin{equation}
\label{equ:20171012}
G(t):=C_{F,\varphi+\psi}(\{\psi<-t\},\varphi).
\end{equation}
The proof of Lemma \ref{lem:B} contains

\begin{Lemma}
\label{lem:Ba}
Assume that $G(0)<+\infty$.
Then $G(t)$ is decreasing with respect to $t\in[0,+\infty)$,
such that
$\lim_{t\to t_{0}+0}G(t)=G(t_{0})$ $(t_{0}\in[0,+\infty))$,
$\lim_{t\to t_{0}-0}G(t)\geq G(t_{0})$ $(t_{0}\in(0,+\infty))$,
and $\lim_{t\to +\infty}G(t)=0$, where $t_{0}\in[0,+\infty)$.
Especially $G(t)$ is lower semi-continuous on $[0,+\infty)$.
\end{Lemma}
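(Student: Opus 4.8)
The plan is to mirror the proof of Lemma \ref{lem:B}, carried out now for the weighted minimal integrals $G(t)=C_{F,\varphi+\psi}(\{\psi<-t\},\varphi)$ of \ref{equ:20171012}, with Lemma \ref{lem:Aa} playing the role of Lemma \ref{lem:A}. First I would dispose of the elementary parts. Since the choice $\tilde f=F$ always satisfies $(\tilde f-F,o)\in\mathcal{I}(\varphi+\psi)_o$ and $\tilde f\in\mathcal{O}(\{\psi<-t\})$, the admissible set is nonempty, so $G(t)\in[0,+\infty]$; and if $t_1<t_2$ then $\{\psi<-t_2\}\subset\{\psi<-t_1\}$, and restricting any competitor from the larger sublevel set to the smaller one only decreases $\int|\cdot|^2e^{-\varphi}$ while leaving the germ at $o$ unchanged, so $G$ is decreasing on $[0,+\infty)$. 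With $G(0)<+\infty$ this already gives $G(t)\le G(0)<+\infty$ for $t\ge0$ and $\lim_{t\to t_0-0}G(t)=\sup_{t<t_0}G(t)\ge G(t_0)$. For $\lim_{t\to+\infty}G(t)=0$ I would take, by Lemma \ref{lem:Aa}, the finite-energy minimizer $F_0$ on $D=\{\psi<0\}$ with $\int_D|F_0|^2e^{-\varphi}=G(0)<+\infty$; its restriction to $\{\psi<-t\}$ gives $G(t)\le\int_{\{\psi<-t\}}|F_0|^2e^{-\varphi}$, and since $\bigcap_{t>0}\{\psi<-t\}=\{\psi=-\infty\}$ is Lebesgue-null, dominated convergence (dominating function $|F_0|^2e^{-\varphi}\in L^1(D)$) forces the right-hand side to $0$; together with $G\ge0$ this yields the limit.

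The core is right continuity, $\lim_{t\to t_0+0}G(t)=G(t_0)$. As $G$ is decreasing, $\ell:=\lim_{t\to t_0+0}G(t)$ exists and $\ell\le G(t_0)$, so I would argue by contradiction assuming $\ell<G(t_0)$. For $t>t_0$ near $t_0$, Lemma \ref{lem:Aa} furnishes the minimizer $F_t$ on $\{\psi<-t\}$ with $(F_t-F,o)\in\mathcal{I}(\varphi+\psi)_o$ and $\int_{\{\psi<-t\}}|F_t|^2e^{-\varphi}=G(t)$, and since $G(t)\to\ell$ these energies are uniformly bounded (say by $\ell+1$) for $t$ close to $t_0$. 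Using $\{\psi<-t_0\}=\bigcup_{t>t_0}\{\psi<-t\}$, every compact $K\subset\{\psi<-t_0\}$ lies in some $\{\psi<-t\}$ with $t>t_0$; because $\varphi$ is locally upper bounded, $e^{-\varphi}\ge e^{-\sup_K\varphi}>0$ on $K$, so the $F_t$ satisfy a uniform $L^2(K)$ bound, hence by Cauchy estimates and Montel's theorem form a normal family on $\{\psi<-t_0\}$. A diagonal extraction over an exhaustion $K_1\Subset K_2\Subset\cdots$ of $\{\psi<-t_0\}$ then gives a subsequence $F_{t_j}$ ($t_j\downarrow t_0$) converging locally uniformly to a holomorphic $\hat F$ on $\{\psi<-t_0\}$. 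By Fatou, once $K_m\subset\{\psi<-t_j\}$, $\int_{K_m}|\hat F|^2e^{-\varphi}\le\liminf_j\int_{K_m}|F_{t_j}|^2e^{-\varphi}\le\liminf_jG(t_j)=\ell$, and Levi's theorem gives $\int_{\{\psi<-t_0\}}|\hat F|^2e^{-\varphi}\le\ell$. Finally, $F_{t_j}-F\to\hat F-F$ in the compact-open topology near $o$ with $(F_{t_j}-F,o)\in\mathcal{I}(\varphi+\psi)_o$, so closedness of sections of coherent analytic sheaves under compact convergence (\cite{G-R}) gives $(\hat F-F,o)\in\mathcal{I}(\varphi+\psi)_o$. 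Thus $\hat F$ is admissible for $G(t_0)=C_{F,\varphi+\psi}(\{\psi<-t_0\},\varphi)$, whence $G(t_0)\le\ell<G(t_0)$, a contradiction, so $\ell=G(t_0)$.

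Lower semicontinuity is then immediate from monotonicity plus right continuity: at any $t_0$, for $t<t_0$ one has $G(t)\ge G(t_0)$, while for $t>t_0$ one has $G(t)\to G(t_0)$, so $\liminf_{t\to t_0}G(t)\ge G(t_0)$ (at $t_0=0$ only the right-hand side occurs).

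The step I expect to be the main obstacle is the normal-families argument in the second paragraph: one must justify passing from finiteness of the \emph{weighted} energies $\int|F_t|^2e^{-\varphi}$ to genuine local uniform bounds for the $F_t$, which is exactly where the hypothesis that $\varphi$ is locally upper bounded enters, and one must arrange the exhaustion, the diagonal extraction, and the Fatou–Levi bookkeeping so that the limit $\hat F$ actually competes for $G(t_0)$ on all of $\{\psi<-t_0\}$ and not merely on compacta. The coherence-closedness input for the germ condition is standard given \cite{G-R}, and the fact that $o\in\{\psi<-t\}$ throughout is covered by the standing normalization $\psi(o)=-\infty$ used in this setting.
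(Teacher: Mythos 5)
Your proposal is correct and follows essentially the same route as the paper, which proves the unweighted Lemma \ref{lem:B} by exactly this contradiction argument (minimizers $F_t$ from Lemma \ref{lem:A}/\ref{lem:Aa}, uniform bounds, normal families, Fatou--Levi, and closedness of coherent sheaf sections) and then observes that the same proof carries the weight $e^{-\varphi}$. You in fact supply two details the paper leaves implicit: the role of the local upper bound on $\varphi$ in converting weighted energy bounds into genuine local $L^2$ bounds, and the dominated-convergence argument for $\lim_{t\to+\infty}G(t)=0$ using that $\{\psi=-\infty\}$ is Lebesgue-null.
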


{The proof of Lemma 2.1 in \cite{GZopen-effect},
implies the following Lemma (details see section \ref{sec:unify}), whose various forms already appear in \cite{guan-zhou13p,guan-zhou13ap}.}

\begin{Lemma} \label{lem:GZ_sharpA}(see \cite{guan-zhou13p,guan-zhou13ap})
Let $B\in(0,1]$ be arbitrarily given.
Let $D$ be a pseudoconvex domain in
$\mathbb{C}^{n}$ containing $o$.
Let $\psi$ be a negative plurisubharmonic function
on $D$, such that $\psi(o)=-\infty$.
Let $\varphi$ be a locally upper bounded function
on $D$, such that $\varphi+\psi$ is plurisubharmonic on $D$.
Let $F$ be a holomorphic function on $\{\psi<-t_{0}\}$ {such that
$\int_{\{\psi<-t_{0}\}}|F|^{2}e^{-\varphi}<+\infty$.}
Then there exists a
holomorphic function $\widetilde{F}$ on $D$, such that,
$$(\widetilde{F}-F,o)\in\mathcal{I}(\varphi+\psi)_{o}$$
and
\begin{equation}
\begin{split}
&\int_{ D}|\tilde{F}-(1-b_{t_0,B}(\psi))F|^{2}e^{-\varphi}
\\\leq&(1-e^{-(t_{0}+B)})\int_{D}\frac{1}{B}(\mathbb{I}_{\{-t_{0}-B<t<-t_{0}\}}\circ\psi)|F|^{2}e^{-\varphi-\psi},
\end{split}
\end{equation}
where $\mathbb{I}_{\{-t_{0}-B<t<-t_{0}\}}$ is the character function of set $\{-t_{0}-B<t<-t_{0}\}$,
$b_{t_0,B}(t)=\int_{-\infty}^{t}\frac{1}{B}\mathbb{I}_{\{-t_{0}-B< s<-t_{0}\}}ds$,
and $t_{0}\geq 0$.
\end{Lemma}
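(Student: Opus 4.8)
The plan is to run the same construction as in the proof of Lemma~2.1 of \cite{GZopen-effect} (which is precisely the special case $\varphi\equiv 0$, i.e.\ Lemma~\ref{lem:GZ_sharp}), checking that every step survives when the flat background weight is replaced by $e^{-\varphi}$; the key observation is that that proof uses only two properties of the data, namely that $\varphi$ is locally bounded above and that $\varphi+\psi$ is plurisubharmonic. First I would reduce to a smooth model: exhaust $D$ by relatively compact strictly pseudoconvex subdomains $D_j\uparrow D$ with smooth boundary; truncate $\psi$ to $\psi^{(N)}:=\max\{\psi,-N\}$ so it becomes bounded; regularize $\psi$ and $\varphi+\psi$ by decreasing sequences of smooth plurisubharmonic functions and take $\varphi$ to be the difference (still locally bounded above on each $D_j$); and smooth $F$, shrinking $\{\psi<-t_0\}$ slightly so that $F$ is holomorphic near the closure of the relevant sublevel set. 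In the smooth model one proves the estimate with the uniform constant $1-e^{-(t_0+B)}$, and then recovers the general statement by letting the regularizations, then $N$, then $j$ tend to the limit, using weak-$L^2$ compactness, Montel's theorem, Levi's theorem, and the closedness of sections of coherent sheaves under compact convergence (the tool already used in the proof of Lemma~\ref{lem:A}).

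For the smooth model, set
$$v:=\bigl(1-b_{t_0,B}(\psi)\bigr)F .$$
Since $1-b_{t_0,B}\equiv 0$ on $[-t_0,+\infty)$, $v$ extends by $0$ to a smooth function on $D$ vanishing on $\{\psi\ge -t_0\}$, while near $o$ (where $\psi=-\infty$, so $b_{t_0,B}(\psi)=0$) one has $v=F$. As $F$ is holomorphic, $\bar\partial v=-F\,b_{t_0,B}'(\psi)\,\bar\partial\psi=-\tfrac1B\bigl(\mathbb{I}_{\{-t_0-B<t<-t_0\}}\circ\psi\bigr)F\,\bar\partial\psi$, a $\bar\partial$-closed $(0,1)$-form supported in the shell $\{-t_0-B<\psi<-t_0\}$. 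I would then solve $\bar\partial u=\bar\partial v$ on $D$ with an $L^2$ bound and put $\widetilde F:=v-u\in\mathcal O(D)$. Near $o$ one gets $\widetilde F-F=-u$, so the membership $(\widetilde F-F,o)\in\mathcal I(\varphi+\psi)_o$ will follow from $\int_D|u|^2e^{-\varphi-\psi}<+\infty$, while the asserted inequality is exactly
$$\int_D|u|^2e^{-\varphi}\;\le\;\bigl(1-e^{-(t_0+B)}\bigr)\int_D\tfrac1B\bigl(\mathbb{I}_{\{-t_0-B<t<-t_0\}}\circ\psi\bigr)|F|^2e^{-\varphi-\psi}.$$

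The core input is the twisted $\bar\partial$-estimate of Donnelly--Fefferman--Berndtsson type used in \cite{GZopen-effect,guan-zhou13p,guan-zhou13ap}. Writing $t=-\psi$, one picks smooth positive profile functions $\eta=\eta(t)$ and $\lambda=\lambda(t)$ and solves $\bar\partial u=\bar\partial v$ against $e^{-\varphi}$ with the twist $\eta+\lambda$; the curvature term controlling the source in the twisted Bochner--Kodaira inequality is
$$\eta(-\psi)\,i\partial\bar\partial(\varphi+\psi)\;+\;i\partial\bar\partial\bigl(\eta(-\psi)\bigr)\;-\;\lambda^{-1}\,i\partial\bigl(\eta(-\psi)\bigr)\wedge\bar\partial\bigl(\eta(-\psi)\bigr),$$
and here plurisubharmonicity of $\varphi+\psi$ (not of $\varphi$ alone) is exactly what makes the first term nonnegative — the single place the hypothesis is used. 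Since $\bar\partial v$ is a scalar multiple of $\bar\partial\psi$, it is controlled by the $i\partial\psi\wedge\bar\partial\psi$-part of this curvature term; choosing $\eta,\lambda$ so that the resulting comparison constant on the shell (where $b_{t_0,B}'=1/B$) integrates to $\int_0^{t_0+B}e^{-s}\,ds=1-e^{-(t_0+B)}$ produces the displayed inequality. Taking $\eta$ to grow like $e^{t}$ (together with the standard truncation of $-\psi$ near $o$) simultaneously yields $\int_D|u|^2e^{-\varphi-\psi}<+\infty$, hence the multiplier-ideal membership; since $\widetilde F-v=-u$ and $v=(1-b_{t_0,B}(\psi))F$, this is the asserted estimate.

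The main obstacle is not the $\bar\partial$-estimate itself — which is formally identical to the one in \cite{GZopen-effect}, that argument never exploiting any feature of the background weight beyond ``$\varphi+\psi$ plurisubharmonic and $\varphi$ locally bounded above'' — but the approximation scheme: $\psi$ is merely plurisubharmonic with a pole at $o$, $\varphi$ merely locally bounded above with $\varphi+\psi$ merely plurisubharmonic, and $F$ is given only on the open set $\{\psi<-t_0\}$. One must run the smooth estimate on each $(D_j,\psi_j^{(N)},\varphi_j^{(N)})$, keep the constant $1-e^{-(t_0+B)}$ uniform, and pass to the limit in the regularizations of $\psi$ and $\varphi+\psi$, then in $N$, then in $j$, preserving at every stage both the $L^2$ bound and the membership $(\widetilde F-F,o)\in\mathcal I(\varphi+\psi)_o$ — precisely the content deferred to Section~\ref{sec:unify}.
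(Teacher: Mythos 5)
Your plan is essentially the paper's own route: the paper proves this lemma via Lemma \ref{lem:unify} in the appendix by rerunning the twisted $\bar\partial$-construction of \cite{GZopen-effect,guan-zhou13p,guan-zhou13ap} — regularize $\psi$, the weight and the cut-off, solve $\bar\partial u=\bar\partial[(1-v'_{\varepsilon}(\psi_{m}))F]$ with the Donnelly--Fefferman/Berndtsson-type estimate whose curvature input is exactly plurisubharmonicity of the combined weight (together with that of $\psi$), and pass to limits in $\varepsilon$, the regularizations and the exhaustion. The only content you leave implicit is the explicit choice of the twist, which the paper obtains from the ODE system $s'-su'=1$, $(s+\tfrac{s'^{2}}{u''s-s''})e^{u-t}=1$ with $u=-\log(1-e^{-t})$, $s=\tfrac{t}{1-e^{-t}}-1$, the constant $1-e^{-(t_{0}+B)}$ arising as $\sup e^{-u}$; this you correctly import from \cite{GZopen-effect}.
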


It follows from Lemma \ref{lem:GZ_sharpA} that the proof of Lemma {\ref{lem:C}} contains

\begin{Lemma}
\label{lem:Ca}
Assume that $G(0)<+\infty$.
Then for any $t_{0}\in[0,+\infty)$,
we have
$$G(0)-G(t_{0})\leq (e^{t_{0}}-1)\liminf_{B\to0+0}(-\frac{G(t_{0}+B)-G(t_{0})}{B}).$$
\end{Lemma}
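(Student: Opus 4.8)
The plan is to prove Lemma \ref{lem:Ca} by repeating verbatim the argument for Lemma \ref{lem:C}, but with the inner products and norms taken with respect to the weight $e^{-\varphi}$ and the sublevel sets defined by $\psi$ rather than $\varphi$. More precisely, fix $t_0\in[0,+\infty)$. By Lemma \ref{lem:Aa} applied to the weighted setting there exists a holomorphic function $F_{t_0}$ on $\{\psi<-t_0\}$ with $(F_{t_0}-F,o)\in\mathcal{I}(\varphi+\psi)_o$ and $\int_{\{\psi<-t_0\}}|F_{t_0}|^2e^{-\varphi}=G(t_0)$. We may assume $\liminf_{B\to0+0}(-\tfrac{G(t_0+B)-G(t_0)}{B})\in(-\infty,0]$ (the decreasing property of $G$, Lemma \ref{lem:Ba}, forbids anything else), and choose $B_j\to0+0$ realizing this liminf along a sequence with $\{\tfrac{G(t_0+B_j)-G(t_0)}{B_j}\}_j$ bounded.

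First I would apply Lemma \ref{lem:GZ_sharpA} with $F$ replaced by $F_{t_0}$ (which is admissible since $\int_{\{\psi<-t_0\}}|F_{t_0}|^2e^{-\varphi}=G(t_0)\le G(0)<+\infty$) to obtain, for each $B_j$, a holomorphic function $\tilde F_j$ on $D$ with $(\tilde F_j-F_{t_0},o)\in\mathcal{I}(\varphi+\psi)_o$, hence $(\tilde F_j-F,o)\in\mathcal{I}(\varphi+\psi)_o$, and
\begin{equation*}
\int_D|\tilde F_j-(1-b_{t_0,B_j}(\psi))F_{t_0}|^2e^{-\varphi}\le(e^{t_0+B_j}-1)\int_D\tfrac{1}{B_j}(\mathbb{I}_{\{-t_0-B_j<t<-t_0\}}\circ\psi)|F_{t_0}|^2e^{-\varphi}
\end{equation*}
(using $\psi<0$ so that $e^{-\psi}\le e^{t_0+B_j}$ on the relevant annulus, exactly as in inequality \ref{equ:GZc}). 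The right-hand integrand is bounded above by $\tfrac{G(t_0)-G(t_0+B_j)}{B_j}$ as in inequality \ref{equ:20170915d}; combined with $0\le b_{t_0,B_j}(\psi)\le1$, a triangle-inequality estimate as in \ref{equ:GZd}--\ref{equ:GZe} shows $\int_D|\tilde F_j|^2e^{-\varphi}$ is bounded in $j$.

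Next I would split $\int_D|\tilde F_j-(1-b_{t_0,B_j}(\psi))F_{t_0}|^2e^{-\varphi}$ over $\{\psi\ge-t_0\}$ and $\{\psi<-t_0\}$ (using $b_{t_0,B_j}(\psi)=1$ on $\{\psi\ge-t_0\}$), estimate the contribution over $\{\psi<-t_0\}$ from below by $\int_{\{\psi<-t_0\}}|\tilde F_j-F_{t_0}|^2e^{-\varphi}$ minus a cross term controlled by $(\int_{\{-t_0-B_j<\psi<-t_0\}}|F_{t_0}|^2e^{-\varphi})^{1/2}$, and then invoke the weighted orthogonality equality of Lemma \ref{lem:Aa} (the analogue of \ref{equ:20170913e}) to convert $\int_{\{\psi\ge-t_0\}}|\tilde F_j|^2e^{-\varphi}+\int_{\{\psi<-t_0\}}|\tilde F_j-F_{t_0}|^2e^{-\varphi}$ into $\int_D|\tilde F_j|^2e^{-\varphi}-G(t_0)$. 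Since $\int_D|\tilde F_j|^2e^{-\varphi}$ is bounded and $\int_{\{\psi<-t_0\}}|F_{t_0}|^2e^{-\varphi}=G(t_0)<+\infty$, the dominated convergence theorem gives $\int_{\{-t_0-B_j<\psi<-t_0\}}|F_{t_0}|^2e^{-\varphi}\to0$, killing the cross term. Chaining the resulting inequalities as in \ref{equ:20170912b} yields
\begin{equation*}
(e^{t_0}-1)\lim_{j\to+\infty}\Bigl(-\tfrac{G(t_0+B_j)-G(t_0)}{B_j}\Bigr)\ge\int_D|\tilde F_j|^2e^{-\varphi}-G(t_0)\ge C_{F,\varphi+\psi}(D,\varphi)-G(t_0)=G(0)-G(t_0),
\end{equation*}
which is the claim. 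I do not anticipate a genuine obstacle: the only points needing care are checking that $\psi$ (not $\varphi$) is the function whose sublevel sets appear everywhere, that the measure $e^{-\varphi}d\lambda_n$ is what makes $L^2$ into a Hilbert space so the orthogonality identity of Lemma \ref{lem:Aa} still holds, and that Lemma \ref{lem:GZ_sharpA} indeed only requires $\int_{\{\psi<-t_0\}}|F|^2e^{-\varphi}<+\infty$ on $F=F_{t_0}$ — all of which are already granted by the earlier statements. The mild bookkeeping subtlety is that $\varphi$ need not be plurisubharmonic alone (only $\varphi+\psi$ is), so one must avoid any step that uses plurisubharmonicity of $\varphi$ in isolation; the argument above never does.
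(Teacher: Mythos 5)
Your proposal is correct and takes exactly the approach the paper intends: the paper presents Lemma \ref{lem:Ca} by asserting (just before its statement) that ``the proof of Lemma \ref{lem:C} contains'' it, i.e., that one should rerun the Lemma \ref{lem:C} argument with $\psi$ defining the sublevel sets, the measure $e^{-\varphi}\,d\lambda_n$ in place of Lebesgue measure, and Lemmas \ref{lem:Aa}, \ref{lem:Ba}, \ref{lem:GZ_sharpA} in place of Lemmas \ref{lem:A}, \ref{lem:B}, \ref{lem:GZ_sharp} --- which is precisely what you do, including the correct observation that the argument never requires plurisubharmonicity of $\varphi$ alone. One cosmetic point worth fixing if you write this up: since $G$ is decreasing, $\liminf_{B\to0+}\bigl(-\tfrac{G(t_0+B)-G(t_0)}{B}\bigr)$ lies in $[0,+\infty]$, so the case you may discard is $+\infty$ (where the inequality is trivial) and the reduction should be to $[0,+\infty)$ rather than $(-\infty,0]$; the paper's Lemma \ref{lem:C} has the same slip and the ensuing argument (extracting a sequence $B_j$ along which the difference quotients are bounded) is what is actually used.
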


By replacing $D$ with {the component of} $\{\psi<-t_{1}\}$ {containing $o$} and replacing $\psi$ with $\psi+t_{1}$,
it follows from Lemma \ref{lem:GZ_sharpA} that the proof of Lemma {\ref{lem:C}} contains

\begin{Lemma}
\label{lem:Da}
Assume that $G(0)<+\infty$.
Then for any $t_{0},t_{1}\in[0,+\infty)$,
we have
$$G(t_{1})-G(t_{1}+t_{0})\leq (e^{t_{0}}-1)\liminf_{B\to0+0}(-\frac{G(t_{0}+t_{1}+B)-G(t_{0}+t_{1})}{B}),$$
i.e.
\begin{equation}
\label{equ:20171012b}
\frac{G(t_{1})-G(t_{1}+t_{0})}{e^{-t_{1}}-e^{-(t_{0}+t_{1})}}\leq \liminf_{B\to0+0}
\frac{G(t_{0}+t_{1}+B)-G(t_{0}+t_{1})}{e^{-(t_{0}+t_{1}+B)}-e^{-(t_{0}+t_{1})}}
\end{equation}
\end{Lemma}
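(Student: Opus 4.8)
The plan is to deduce Lemma \ref{lem:Da} from Lemma \ref{lem:Ca} by the ``shift and restrict'' reduction announced in the sentence preceding the statement. Fix $t_1\in[0,+\infty)$ and let $D_1$ be the connected component of $\{\psi<-t_1\}$ containing $o$ (recall $\psi(o)=-\infty$, so $o\in D_1$); since $D$ is pseudoconvex and $\psi$ is plurisubharmonic on $D$, $D_1$ is again a pseudoconvex domain containing $o$. Put $\psi_1:=(\psi+t_1)|_{D_1}$ and $\varphi_1:=\varphi|_{D_1}$. First I would check that the data $(D_1,\varphi_1,\psi_1)$ together with $F|_{D_1}$ satisfies the standing hypotheses under which Lemma \ref{lem:Ca} is formulated: $\psi_1<0$ on $D_1$ with $\psi_1(o)=-\infty$; $\varphi_1$ is locally upper bounded; $\varphi_1+\psi_1=(\varphi+\psi)|_{D_1}+t_1$ is plurisubharmonic on $D_1$; the $L^{2}$ weight $e^{-\varphi_1}=e^{-\varphi}$ is literally unchanged; and the multiplier ideal germ is unchanged, $\mathcal{I}(\varphi_1+\psi_1)_o=\mathcal{I}(\varphi+\psi+t_1)_o=\mathcal{I}(\varphi+\psi)_o$, because multiplying $e^{-(\varphi+\psi)}$ by the positive constant $e^{-t_1}$ does not affect local integrability near $o$.

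Next I would establish the identification $G_1(s)=G(s+t_1)$ for all $s\ge 0$, where $G_1(s):=C_{F,\varphi_1+\psi_1}(\{\psi_1<-s\},\varphi_1)$ is the generalized minimal integral attached to the shifted data. Two elementary observations are needed: (i) $\{\psi_1<-s\}=\{\psi<-(s+t_1)\}\cap D_1$, and the connected component of $\{\psi<-(s+t_1)\}$ containing $o$ is already contained in $D_1$, hence it coincides with the component of $\{\psi_1<-s\}$ containing $o$; (ii) for any Lebesgue measurable $\Omega$ with $o\in\Omega$, the quantity $C_{F,I}(\Omega,\varphi)$ depends only on the connected component of $\Omega$ containing $o$, since on any other component the infimum is attained by the zero function (which contributes nothing and is unconstrained there, as the condition $(\tilde f-F,o)\in I$ is purely local at $o$), while restricting any competitor to the component of $o$ only decreases its weighted norm. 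Combining (i), (ii), $\varphi_1=\varphi$, and $\mathcal{I}(\varphi_1+\psi_1)_o=\mathcal{I}(\varphi+\psi)_o$ gives $G_1(s)=G(s+t_1)$; in particular $G_1(0)=G(t_1)\le G(0)<+\infty$ by the decreasing property of $G$ (Lemma \ref{lem:Ba}), so Lemma \ref{lem:Ca} applies to $(D_1,\varphi_1,\psi_1)$ and, after the substitution $G_1(\cdot)=G(\cdot+t_1)$, yields
\begin{equation*}
G(t_1)-G(t_0+t_1)\le (e^{t_0}-1)\liminf_{B\to 0+0}\Bigl(-\frac{G(t_0+t_1+B)-G(t_0+t_1)}{B}\Bigr),
\end{equation*}
which is the first assertion of Lemma \ref{lem:Da}.

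Finally I would translate this into the form \ref{equ:20171012b}. For $t_0>0$ one has $e^{-t_1}-e^{-(t_0+t_1)}=e^{-(t_0+t_1)}(e^{t_0}-1)>0$ and $e^{-(t_0+t_1+B)}-e^{-(t_0+t_1)}=-e^{-(t_0+t_1)}(1-e^{-B})<0$, so after multiplying by the positive constant $e^{-(t_0+t_1)}$ the inequality \ref{equ:20171012b} is equivalent to
\begin{equation*}
\frac{G(t_1)-G(t_0+t_1)}{e^{t_0}-1}\le \liminf_{B\to 0+0}\Bigl(-\frac{G(t_0+t_1+B)-G(t_0+t_1)}{1-e^{-B}}\Bigr).
\end{equation*}
Since $G$ is decreasing, the difference quotients $-\frac{G(t_0+t_1+B)-G(t_0+t_1)}{B}$ are nonnegative, and multiplying them by the factor $\frac{B}{1-e^{-B}}$, which tends to $1$ as $B\to 0+0$, does not change the value of the $\liminf$; thus the two displayed inequalities coincide and \ref{equ:20171012b} follows. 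The degenerate case $t_0=0$ is trivial because both sides of the first assertion vanish.

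I expect the only genuinely delicate point to be the component reduction $G_1(s)=G(s+t_1)$ — that is, confirming that descending to the component $D_1$ and shifting $\psi$ by the constant $t_1$ alters neither the multiplier ideal germ at $o$ nor the value of the minimal weighted $L^2$ integral; once that is in place, everything reduces to applying Lemma \ref{lem:Ca} verbatim to the shifted data and to the elementary rewriting via $B/(1-e^{-B})\to 1$.
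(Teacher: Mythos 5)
Your proposal is correct and takes exactly the paper's route: the paper's entire proof of Lemma \ref{lem:Da} is the one-sentence reduction of replacing $D$ by the component of $\{\psi<-t_{1}\}$ containing $o$ and $\psi$ by $\psi+t_{1}$, then running the proof of Lemma \ref{lem:C} (equivalently, applying Lemma \ref{lem:Ca}) to the shifted data. Your explicit verification of the identification $G_{1}(s)=G(s+t_{1})$ (via the component reduction) and of the equivalence with inequality \ref{equ:20171012b} through the factor $B/(1-e^{-B})\to 1$ merely fills in details the paper leaves implicit.
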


As $G(-\log r)$ is lower semicontinuous (Lemma \ref{lem:Ba}),
then it follows from the following well-known property of concave functions (Lemma \ref{lem:Ea}) that Lemma \ref{lem:Da} is equivalent to Proposition \ref{prop:logconcave}.

\begin{Lemma}
\label{lem:Ea}
Let $a(r)$ be a lower semicontinuous function on $(0,1]$.
Then $a(r)$ is concave if and only if
$$\frac{a(r_{1})-a(r_{2})}{r_{1}-r_{2}}\leq \liminf_{r_{3}\to r_{2}-0}\frac{a(r_{3})-a(r_{2})}{r_{3}-r_{2}},$$
holds for any $0< r_{2}<r_{1}\leq 1$.
\end{Lemma}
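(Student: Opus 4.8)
\textbf{Plan of proof (Lemma \ref{lem:Ea}).}
The plan is to reduce the claimed characterization to the classical fact that a function on an interval is concave iff it has a ``one-sided supporting inequality'' at every interior point, being careful about what lower semicontinuity buys us at the endpoints and at points where the difference quotients only converge along a $\liminf$. First I would settle the easy direction: if $a(r)$ is concave on $(0,1]$, then for fixed $0<r_2<r_1\le 1$ the slope function $r_3\mapsto \frac{a(r_3)-a(r_2)}{r_3-r_2}$ is monotone (a standard consequence of the three-chords lemma for concave functions), so the left-hand difference quotient at $r_1$ is dominated by the limit of difference quotients as $r_3\to r_2-0$; in particular it is dominated by the $\liminf$. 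This needs no semicontinuity hypothesis.

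For the converse, assume the difference-quotient inequality holds for all $0<r_2<r_1\le 1$. Fix two points $0<s<t\le 1$ and a point $r$ strictly between them; I want $a(r)\ge$ the value at $r$ of the affine function $\ell$ interpolating $(s,a(s))$ and $(t,a(t))$. The standard move is to form $h(r):=a(r)-\ell(r)$, which still satisfies the same one-sided inequality (subtracting an affine function leaves difference quotients shifted by a constant, so the inequality is preserved), and which vanishes at the endpoints $s$ and $t$; then I must show $h\ge 0$ on $[s,t]$. Suppose not: then $\inf_{[s,t]}h<0$, and here is where lower semicontinuity enters — $h$ is l.s.c.\ on the compact interval $[s,t]$, so it attains its infimum at some $r_0\in[s,t]$, and since $h(s)=h(t)=0>h(r_0)$ we have $r_0\in(s,t)$. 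Now apply the hypothesis with $r_2=r_0$ and $r_1$ slightly larger (still in $(r_0,t]$): the left-hand difference quotient $\frac{h(r_1)-h(r_0)}{r_1-r_0}$ is $\ge 0$ because $h(r_0)$ is the minimum, whereas the $\liminf$ on the right, $\liminf_{r_3\to r_0-0}\frac{h(r_3)-h(r_0)}{r_3-r_0}$, is $\le 0$ for the same reason (numerator $\ge 0$, denominator $<0$); the inequality then forces both one-sided slopes to be exactly $0$ in a neighbourhood, which I would push to conclude $h\equiv h(r_0)$ on a maximal subinterval, contradicting $h(s)=0$. (Alternatively, and more cleanly, one argues that the set $\{h=\min\}$ is relatively closed by l.s.c.\ and relatively open by the slope inequality, hence all of $[s,t]$, again contradicting the endpoint values.)

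The main obstacle is the converse direction, and within it the delicate point is that the hypothesis only controls a \emph{one-sided} $\liminf$ of difference quotients, not a genuine derivative or even a $\limsup$; so I cannot directly invoke the textbook ``midpoint-concave $+$ measurable $\Rightarrow$ concave'' or ``nonnegative second derivative'' criteria. The l.s.c.\ assumption is exactly what repairs this: it guarantees the infimum over a compact subinterval is attained, turning the contradiction into a purely local statement at the minimizing point $r_0$, where the sign of the left difference quotient and the sign of the right $\liminf$ are opposite, so the hypothesis pins them to zero. Once that local rigidity is in hand, propagating ``$h$ is locally constant at its minimum'' across $[s,t]$ (using l.s.c.\ to close up the constancy set) finishes the argument. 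I would keep the write-up short by stating the three-chords lemma for the forward direction and the ``clopen minimizing set'' packaging for the reverse, rather than re-deriving concavity facts from scratch.
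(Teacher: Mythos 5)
The paper does not actually prove Lemma \ref{lem:Ea}; it simply invokes it as a ``well-known property of concave functions,'' so there is no paper proof to compare against. Your argument is correct and is essentially the natural one. A small bookkeeping point: in your reverse direction, once you have shown that at a minimizer $r_0\in(s,t)$ the hypothesis forces $\frac{h(r_1)-h(r_0)}{r_1-r_0}=0$ for every $r_1\in(r_0,t]$, you already get $h(t)=h(r_0)<0$ directly, contradicting $h(t)=0$; there is no need for the ``maximal subinterval'' or ``clopen set'' packaging, and the contradiction is with $h(t)=0$, not with $h(s)=0$ as you wrote (the constancy you extracted propagates only to the \emph{right} of $r_0$, since the hypothesis controls slopes $\frac{a(r_1)-a(r_2)}{r_1-r_2}$ only for $r_1>r_2$). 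Otherwise the proof is clean: subtracting the chord preserves both the l.s.c.\ of $a$ and the one-sided slope inequality, l.s.c.\ yields an attained interior minimum when $\inf h<0$, and at that minimum the left-hand quotient is $\ge 0$ while the right-hand $\liminf$ is $\le 0$, pinning everything to zero.
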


\subsection{A unified approach to Lemma \ref{lem:GZ_sharp} and Lemma \ref{lem:GZ_sharpA}}\label{sec:unify}

We prove the following Lemma,
which is a unified approach to Lemma \ref{lem:GZ_sharp} and Lemma \ref{lem:GZ_sharpA}
whose various forms already appear in \cite{guan-zhou13p,guan-zhou13ap} etc.:

\begin{Lemma} \label{lem:unify}
Let $B\in(0,+\infty)$ and $t_{0}\geq0$ be arbitrarily given.
Let $D$ be a pseudoconvex domain in $\mathbb{C}^{n}$.
Let $\psi$ be a negative plurisubharmonic function
on $D$.
Let $\varphi$ be a plurisubharmonic function on $D$.
Let $F$ be a holomorphic function on $\{\psi<-t_{0}\}$,
such that
\begin{equation}
\label{equ:20171124a}
\int_{K\cap\{\psi<-t_{0}\}}|F|^{2}<+\infty
\end{equation}
for any compact subset $K$ of $D$,
and
\begin{equation}
\label{equ:20171122a}
\int_{D}\frac{1}{B}\mathbb{I}_{\{-t_{0}-B<\psi<-t_{0}\}}|F|^{2}e^{-\varphi}d\lambda_{n}\leq C<+\infty.
\end{equation}
Then there exists a
holomorphic function $\tilde{F}$ on $D$, such that,
\begin{equation}
\label{equ:3.4}
\begin{split}
\int_{D}&|\tilde{F}-(1-b(\psi))F|^{2}e^{-\varphi+v(\psi)}d\lambda_{n}\leq(1-e^{-(t_{0}+B)})C
\end{split}
\end{equation}
where
$b(t)=\int_{-\infty}^{t}\frac{1}{B}\mathbb{I}_{\{-t_{0}-B< s<-t_{0}\}}ds$,
and $v(t)=\int_{0}^{t}b(s)ds$.
\end{Lemma}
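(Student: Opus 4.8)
The plan is to deduce this from the Ohsawa--Takegoshi type $L^2$-extension/division theorem in the precise quantitative form that already underlies Lemma 2.1 in \cite{GZopen-effect} (and its ancestors in \cite{guan-zhou13p,guan-zhou13ap}), by carefully tracking that the argument there only uses information on the sublevel set $\{\psi<-t_0\}$. First I would reduce to the case $\psi(o)=-\infty$: if $\psi(o)>-\infty$ one may replace $\psi$ by $\psi+\max\{\psi,-A\}$-type truncations, or note that then $\mathcal{I}(\varphi+\psi)_o=\mathcal{I}(\varphi)_o$ and the statement is easier; in any case the substantive content is the pole case. Next, I would set up the single-weight $\bar\partial$-equation as in the cited references: the target section is $(1-b(\psi))F$, which is holomorphic on $\{\psi<-t_0\}$ and extends by $0$ across $\{\psi\geq -t_0-B\}$ up to a $\bar\partial$-error supported in the annulus $\{-t_0-B<\psi<-t_0\}$, where $\bar\partial\big((1-b(\psi))F\big)=-b'(\psi)\,\bar\partial\psi\, F=-\tfrac1B\mathbb{I}_{\{-t_0-B<\psi<-t_0\}}\bar\partial\psi\,F$.

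The key step is to apply the weighted $L^2$-existence theorem for $\bar\partial$ with the weight $e^{-\varphi+v(\psi)}$ and an auxiliary negative weight $-\log(\cdots)$ built from $\psi$ exactly as in \cite{GZopen-effect}: the Donnelly--Fefferman / Berndtsson-style trick produces a solution $u$ with
\begin{equation*}
\int_D |u|^2 e^{-\varphi+v(\psi)}\,d\lambda_n \;\leq\; (1-e^{-(t_0+B)})\int_D \frac{1}{B}\mathbb{I}_{\{-t_0-B<\psi<-t_0\}}|F|^2 e^{-\varphi}\,d\lambda_n \;\leq\;(1-e^{-(t_0+B)})C,
\end{equation*}
where the constant $1-e^{-(t_0+B)}$ comes from integrating $b'(\psi)e^{-v(\psi)}$ (equivalently, from the identity $\int b'(s)^2 e^{-v(s)}\cdot(\text{weight factor})\,ds$ that appears in those proofs) — this is precisely where the exponent $t_0+B$ enters. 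Then $\tilde F:=(1-b(\psi))F-u$ is holomorphic on $D$ (the $\bar\partial$-equation kills the annular error), satisfies the stated integral bound \eqref{equ:3.4}, and the negative auxiliary weight forces $(\tilde F-F,o)\in\mathcal{I}(\varphi+\psi)_o$ because $u$ vanishes to high order at $o$ relative to $e^{-\varphi-\psi}$; the finiteness hypothesis \eqref{equ:20171124a} is what guarantees $(1-b(\psi))F$ is genuinely $L^2_{\mathrm{loc}}$ on $D$ so the extension is not vacuous.

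The main obstacle is bookkeeping rather than new ideas: one must verify that every step in the proof of Lemma 2.1 of \cite{GZopen-effect} that is stated for $F\in\mathcal{O}(D)$ in fact only consumes $F$ on $\{\psi<-t_0\}$ together with the local-integrability condition \eqref{equ:20171124a} — in particular that the a priori estimate, the choice of the cutoff family approximating $\mathbb{I}_{\{-t_0-B<\psi<-t_0\}}$, and the passage to the limit all go through verbatim. I would therefore organize the write-up as: (i) recall the exact weighted estimate used there; (ii) observe that the source term $b'(\psi)\bar\partial\psi\,F$ is supported in $\{-t_0-B<\psi<-t_0\}\subset\{\psi<-t_0\}$, so only $F|_{\{\psi<-t_0\}}$ matters; (iii) run the estimate to get $u$ with the bound above; (iv) check holomorphy of $\tilde F$ on $D$ and membership in $\mathcal{I}(\varphi+\psi)_o$. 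Finally, Lemma \ref{lem:GZ_sharp} and Lemma \ref{lem:GZ_sharpA} follow immediately: Lemma \ref{lem:GZ_sharpA} is the case where one keeps the weight $\varphi$ and uses $e^{-\varphi-\psi}$ on the right, and Lemma \ref{lem:GZ_sharp} is the further specialization $\varphi\equiv 0$, $B\in(0,1]$, after discarding the nonnegative factor $e^{v(\psi)}\le 1$ on the left (so $\int_D|\tilde F-(1-b(\psi))F|^2\le\int_D|\tilde F-(1-b(\psi))F|^2e^{v(\psi)\cdot(-1)}$ in the direction we need, giving the stated clean inequality).
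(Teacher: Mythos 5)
Your plan is the same one the paper executes: decompose $(1-b(\psi))F$ with a $\bar\partial$-error supported in the annulus $\{-t_0-B<\psi<-t_0\}$, solve the twisted $\bar\partial$-equation with a Donnelly--Fefferman/Berndtsson weight, and observe that only $F|_{\{\psi<-t_0\}}$ enters. So the approach matches. However, as written the proposal is a high-level sketch that defers exactly the steps the paper carries out in detail, and a couple of those are not routine bookkeeping: (a) $\psi$ and $\varphi$ are not smooth, so the source term $-b'(\psi)\bar\partial\psi\,F$ is only formal; the paper regularizes to $\psi_m,\varphi_{m'}$, uses a smooth family $v_\varepsilon$ approximating $b,v$, and then takes three successive limits ($m\to\infty$, $\varepsilon\to 0$, $m'\to\infty$), each justified by dominated convergence under the local-$L^2$ hypothesis \eqref{equ:20171124a} and uniform weight bounds; and (b) the sharp constant $1-e^{-(t_0+B)}$ comes from solving the explicit ODE system $(s+\tfrac{s'^2}{u''s-s''})e^{u-t}=1$, $s'-su'=1$, which yields $u=-\log(1-e^{-t})$, $s=\tfrac{t}{1-e^{-t}}-1$ and, crucially, makes $(\eta+g^{-1})^{-1}=e^{v_\varepsilon(\psi_m)}e^{\phi}$ so that the extra factor $e^{v(\psi)}$ appears on the left of \eqref{equ:3.4}. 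Your description "integrating $b'(\psi)e^{-v(\psi)}$" does not identify this mechanism.

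One concrete error, though it concerns the deduction of the other lemmas rather than Lemma \ref{lem:unify} itself: to recover Lemma \ref{lem:GZ_sharp} you should not take $\varphi\equiv 0$ in Lemma \ref{lem:unify}; you should take $\psi_{\mathrm{unify}}=\varphi_{\mathrm{unify}}=\varphi$ (the polar function in Lemma \ref{lem:GZ_sharp}), so the right-hand side becomes $\int\tfrac1B\mathbb{I}|F|^2e^{-\varphi}$ as required, and then drop the factor $e^{-\varphi+v(\varphi)}\geq 1$ (which holds because $v(t)\geq t$) on the left. With $\varphi\equiv 0$ the factor $e^{-\varphi}$ on the right-hand side of Lemma \ref{lem:GZ_sharp} would not appear. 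Similarly, Lemma \ref{lem:GZ_sharpA} follows by taking $\varphi_{\mathrm{unify}}=\varphi+\psi$ and dropping $e^{v(\psi)-\psi}\geq 1$, not by keeping $\varphi$ as is. This correction is needed if you want the claimed "immediate" corollaries to actually follow.
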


It is clear that $\mathbb{I}_{(-t_{0},+\infty)}\leq b(t)\leq\mathbb{I}_{(-t_{0}-B,+\infty)}$ and $\max\{t,-t_{0}-B\}\leq v(t) \leq\max\{t,-t_{0}\}$.

It suffices to consider the case of Lemma \ref{lem:unify} that
$D$ is a strongly pseudoconvex domain and $\varphi$ and $\psi$ are plurisubharmonic functions on an open set $U$ containing $\bar{D}$,
and $F$ is a holomorphic function on $U\cap\{\psi<-t_{0}\}$ such that $\int_{D\cap\{\psi<-t_{0}\}}|F|^{2}<+\infty$.
In the following remark, we recall some standard steps (see e.g. \cite{siu96,guan-zhou13p,guan-zhou13ap}) to illustrate it.

\begin{Remark}
\label{rem:unify}
It is well-known that there exist strongly pseudoconvex domains $D_{1}\subset\subset\cdots\subset\subset D_{j}\subset\subset D_{j+1}\subset\subset\cdots$
such that $\cup_{j=1}^{+\infty}D_{j}=D$.

If inequality \ref{equ:3.4} holds on any $D_{j}$ and inequality \ref{equ:20171122a} holds on $D$, then we obtain a sequence of holomorphic functions $\tilde{F}_{j}$ on $D_{j}$ such that
\begin{equation}
\begin{split}
&\int_{D_{j}}|\tilde{F}_{j}-(1-b(\psi))F|^{2}e^{-\varphi+v(\psi)}d\lambda_{n}
\\\leq&(1-e^{-(t_{0}+B)})\int_{D_{j}}\frac{1}{B}\mathbb{I}_{\{-t_{0}-B<\psi<-t_{0}\}}|F|^{2}e^{-\varphi}d\lambda_{n}\leq(1-e^{-(t_{0}+B)})C
\end{split}
\end{equation}
is bounded with respect to $j$.
Note that for any given $j$, $e^{-\varphi+v(\psi)}$ has a positive lower bound,
then it follows that for any any given $j$, $\int_{ D_{j}}|\tilde{F}_{j'}-(1-b(\psi))F|^{2}$ is bounded with respect to $j'\geq j$.
Combining with
\begin{equation}
\label{equ:20171123a}
\int_{ D_{j}}|(1-b(\psi))F|^{2}\leq
\int_{D_{j}\cap\{\psi<-t_{0}\}}|F|^{2}<+\infty
\end{equation}
and inequality \ref{equ:3.4},
one can obtain that $\int_{ D_{j}}|\tilde{F}_{j'}|^{2}$ is bounded with respect to $j'\geq j$.

By diagonal method, there exists a subsequence $F_{j''}$ uniformly convergent on any $\bar{D}_{j}$ to a holomorphic function on $D$ denoted by $\tilde{F}$.
Then it follows from inequality \ref{equ:20171123a} and the dominated convergence theorem that
$$\int_{ D_{j}}|\tilde{F}-(1-b(\psi))F|^{2}e^{-\max\{\varphi-v(\psi),-M\}}d\lambda_{n}\leq(1-e^{-(t_{0}+B)})C$$
for any $M>0$,
which implies
$$\int_{ D_{j}}|\tilde{F}-(1-b(\psi))F|^{2}e^{-(\varphi-v(\psi))}d\lambda_{n}\leq(1-e^{-(t_{0}+B)})C,$$
then one can obtain Lemma \ref{lem:unify} when $j$ goes to $+\infty$.
\end{Remark}

For the sake of completeness, we recall some lemmas on $L^{2}$ estimates for some $\bar\partial$ equations, and $\bar\partial^*$ means the Hilbert adjoint operator of
$\bar\partial$.

\begin{Lemma}\label{l:lem3}(see \cite{siu96}, see also \cite{berndtsson})
Let $\Omega\subset\subset \mathbb{C}^{n}$ be a domain with $C^\infty$ boundary $b\Omega$,
$\Phi\in C^{\infty}(\overline \Omega)$,
 Let $\rho$ be a $C^{\infty}$ defining function for $\Omega$
such that $|d\rho|=1$ on $b\Omega$.
Let $\eta$ be a smooth function on $\overline{\Omega}$. For any $(0,1)$-form
$\alpha=\sum_{j=1}^{n}\alpha_{\bar{j}}d\bar z^{j}\in Dom_\Omega(\bar{\partial}^*)\cap C^\infty_{(0,1)}(\overline\Omega)$,
\begin{equation}
\label{guan1}
\begin{split}
&\int_{\Omega}\eta|\bar{\partial}^{*}_{\Phi}\alpha|^{2}e^{-\Phi}d\lambda_{n}
+\int_{\Omega}\eta|\bar{\partial}\alpha|^{2}e^{-\Phi}d\lambda_{n}
=\sum_{i,j=1}^{n}\int_{\Omega}
\eta |\overline\partial_{j}\alpha_{\bar{j}}|^{2}d\lambda_{n}
\\&+\sum_{i,j=1}^{n}
\int_{b\Omega}\eta(\partial_i\bar\partial_{j}\rho)\alpha_{\bar{i}}\overline{{\alpha}_{\bar{j}}}e^{-\Phi}dS
+\sum_{i,j=1}^{n}\int_{\Omega}
\eta(\partial_i\bar \partial_{j}\Phi)\alpha_{\bar{i}}\overline{{\alpha}_{\bar{j}}}e^{-\Phi}d\lambda_{n}
\\&+\sum_{i,j=1}^{n}\int_{\Omega}
-(\partial_i\bar \partial_{j}\eta)\alpha_{\bar{i}}\overline{{\alpha}_{\bar j}}e^{-\Phi}d\lambda_{n}
+2\mathrm{Re}(\bar\partial^*_\Phi\alpha,\alpha\llcorner(\bar\partial\eta)^\sharp )_{\Omega,\Phi},
\end{split}
\end{equation}
where $d\lambda_{n}$ is the Lebesgue measure on $\mathbb{C}^{n}$, and
$\alpha\llcorner(\bar\partial\eta)^\sharp=\sum_{j}\alpha_{\bar{j}}\partial_{j}\eta$.
\end{Lemma}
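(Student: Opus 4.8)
The plan is to establish \ref{guan1} by the classical integration-by-parts (Morrey--Kohn--Hörmander) computation, carrying the weight $\eta$ along and bookkeeping every term in which a derivative lands on $\eta$. Write $\alpha=\sum_{j}\alpha_{\bar j}\,d\bar z^{j}$. On $(0,1)$-forms the formal adjoint is $\bar\partial^{*}_{\Phi}\alpha=-\sum_{j}\delta_{j}\alpha_{\bar j}$, where $\delta_{j}:=e^{\Phi}\,\partial_{z_{j}}\,e^{-\Phi}=\partial_{z_{j}}-(\partial_{z_{j}}\Phi)$, so that $(\delta_{j}u)e^{-\Phi}=\partial_{z_{j}}(u\,e^{-\Phi})$ and, since $|d\rho|=1$ on $b\Omega$, the divergence theorem gives $\int_{\Omega}(\delta_{j}u)\bar v\,e^{-\Phi}d\lambda_{n}=-\int_{\Omega}u\,\overline{\bar\partial_{j}v}\,e^{-\Phi}d\lambda_{n}+\int_{b\Omega}u\bar v\,(\partial_{z_{j}}\rho)e^{-\Phi}dS$; the hypothesis $\alpha\in \mathrm{Dom}_{\Omega}(\bar\partial^{*})$ is exactly the boundary relation $\sum_{j}\alpha_{\bar j}\,\partial_{z_{j}}\rho=0$ on $b\Omega$. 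I would also record the elementary expansion $\int_{\Omega}\eta|\bar\partial\alpha|^{2}e^{-\Phi}d\lambda_{n}=\sum_{i,j}\int_{\Omega}\eta|\bar\partial_{i}\alpha_{\bar j}|^{2}e^{-\Phi}d\lambda_{n}-\sum_{i,j}\int_{\Omega}\eta(\bar\partial_{i}\alpha_{\bar j})\overline{(\bar\partial_{j}\alpha_{\bar i})}e^{-\Phi}d\lambda_{n}$.

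Next I would expand $\int_{\Omega}\eta|\bar\partial^{*}_{\Phi}\alpha|^{2}e^{-\Phi}d\lambda_{n}$ by repeated integration by parts. A first integration by parts moves $\delta_{i}$ off $\alpha_{\bar i}$; the resulting boundary contribution vanishes by the $\mathrm{Dom}(\bar\partial^{*})$ relation, the piece where the derivative hits $\eta$ equals $\overline{(\bar\partial^{*}_{\Phi}\alpha,\alpha\llcorner(\bar\partial\eta)^{\sharp})_{\Omega,\Phi}}$, and expanding $\partial_{z_{i}}\overline{\bar\partial^{*}_{\Phi}\alpha}$ via the commutator of $\partial_{z_{i}}$ with the conjugated twisted derivative produces the curvature term $\sum_{i,j}\int_{\Omega}\eta(\partial_{i}\bar\partial_{j}\Phi)\alpha_{\bar i}\overline{\alpha_{\bar j}}e^{-\Phi}d\lambda_{n}$ plus a remaining integral $-\sum_{i,j}\int_{\Omega}\eta\,\alpha_{\bar i}\,\overline{\delta_{j}\bar\partial_{i}\alpha_{\bar j}}\,e^{-\Phi}d\lambda_{n}$. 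Integrating this last integral by parts in $\bar\partial_{j}$ yields three pieces: a boundary integral $-\sum_{i,j}\int_{b\Omega}\eta\,\alpha_{\bar i}(\bar\partial_{j}\rho)\overline{\bar\partial_{i}\alpha_{\bar j}}e^{-\Phi}dS$; the cross integral $\sum_{i,j}\int_{\Omega}\eta(\bar\partial_{j}\alpha_{\bar i})\overline{\bar\partial_{i}\alpha_{\bar j}}e^{-\Phi}d\lambda_{n}$, which cancels the cross term coming from $\int_{\Omega}\eta|\bar\partial\alpha|^{2}e^{-\Phi}d\lambda_{n}$ and leaves the full-gradient term $\sum_{i,j}\int_{\Omega}\eta|\bar\partial_{i}\alpha_{\bar j}|^{2}e^{-\Phi}d\lambda_{n}$; and a term $\sum_{i,j}\int_{\Omega}(\bar\partial_{j}\eta)\alpha_{\bar i}\overline{\bar\partial_{i}\alpha_{\bar j}}e^{-\Phi}d\lambda_{n}$. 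Integrating this last one more time by parts and again discarding its boundary piece via the $\mathrm{Dom}(\bar\partial^{*})$ relation turns it into $-\sum_{i,j}\int_{\Omega}(\partial_{i}\bar\partial_{j}\eta)\alpha_{\bar i}\overline{\alpha_{\bar j}}e^{-\Phi}d\lambda_{n}+(\bar\partial^{*}_{\Phi}\alpha,\alpha\llcorner(\bar\partial\eta)^{\sharp})_{\Omega,\Phi}$; adding the two occurrences of this inner product gives the $2\mathrm{Re}(\bar\partial^{*}_{\Phi}\alpha,\alpha\llcorner(\bar\partial\eta)^{\sharp})_{\Omega,\Phi}$ appearing in \ref{guan1}.

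At this stage everything matches \ref{guan1} except that the surviving boundary integral $-\sum_{i,j}\int_{b\Omega}\eta\,\alpha_{\bar i}(\bar\partial_{j}\rho)\overline{\bar\partial_{i}\alpha_{\bar j}}e^{-\Phi}dS$ must be identified with the Levi-form boundary term $\sum_{i,j}\int_{b\Omega}\eta(\partial_{i}\bar\partial_{j}\rho)\alpha_{\bar i}\overline{\alpha_{\bar j}}e^{-\Phi}dS$, and this is the main technical point. I would argue as Morrey does: near $b\Omega$ write $\sum_{j}(\partial_{z_{j}}\rho)\alpha_{\bar j}=\rho\cdot h$ for a smooth $h$, apply $\bar\partial_{i}$ and restrict to $\{\rho=0\}$ to get $\sum_{j}(\partial_{z_{j}}\rho)(\bar\partial_{i}\alpha_{\bar j})=(\bar\partial_{i}\rho)h-\sum_{j}(\bar\partial_{i}\partial_{z_{j}}\rho)\alpha_{\bar j}$ there, take complex conjugates, multiply by $\eta\,\alpha_{\bar i}$ and sum over $i,j$; the term containing $h$ drops out because $\sum_{i}\alpha_{\bar i}\partial_{z_{i}}\rho=0$ on $b\Omega$, leaving precisely the asserted identity between the two boundary integrals. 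Combining all of the above yields \ref{guan1}.

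Finally, I would remark on a shortcut available when $\eta>0$ (which is the case in the applications of this lemma): \ref{guan1} is then equivalent to the untwisted Morrey--Kohn--Hörmander identity (the case $\eta\equiv 1$, quotable from \cite{siu96,berndtsson}) applied with the weight $\Phi-\log\eta$ in place of $\Phi$, once one notes that the Hilbert adjoint of $\bar\partial$ with respect to $\eta\,e^{-\Phi}d\lambda_{n}$ is $\bar\partial^{*}_{\Phi}\alpha-\eta^{-1}\,\alpha\llcorner(\bar\partial\eta)^{\sharp}$; the terms quadratic in $\bar\partial\eta$ produced by expanding $\partial_{i}\bar\partial_{j}\log\eta$ on one side cancel exactly those produced by expanding $|\bar\partial^{*}_{\Phi}\alpha-\eta^{-1}\alpha\llcorner(\bar\partial\eta)^{\sharp}|^{2}$ on the other, and the cross term supplies $2\mathrm{Re}(\bar\partial^{*}_{\Phi}\alpha,\alpha\llcorner(\bar\partial\eta)^{\sharp})_{\Omega,\Phi}$, leaving exactly \ref{guan1} with $-\partial_{i}\bar\partial_{j}\eta$ rather than $-\partial_{i}\bar\partial_{j}\log\eta$.
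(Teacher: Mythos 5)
The paper states this lemma as a quoted result, attributing it to \cite{siu96} and \cite{berndtsson} without supplying a proof; so there is no in-paper argument to compare against. Your derivation is a correct self-contained proof of the twisted Morrey--Kohn--H\"{o}rmander identity, and both of your routes work.

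On the direct route: the three integrations by parts, the tracking of which boundary terms vanish by the $\mathrm{Dom}(\bar\partial^{*})$ relation $\sum_{j}\alpha_{\bar j}\partial_{z_j}\rho=0$, the commutator $[\bar\partial_{i},\delta_{j}]=-\partial_{j}\bar\partial_{i}\Phi$ producing the curvature term, the cancellation of the cross term against $\int\eta\lvert\bar\partial\alpha\rvert^{2}e^{-\Phi}$ via
\begin{equation*}
\int_{\Omega}\eta\lvert\bar\partial\alpha\rvert^{2}e^{-\Phi}\,d\lambda_{n}=\sum_{i,j}\int_{\Omega}\eta\lvert\bar\partial_{i}\alpha_{\bar j}\rvert^{2}e^{-\Phi}\,d\lambda_{n}-\sum_{i,j}\int_{\Omega}\eta(\bar\partial_{i}\alpha_{\bar j})\overline{(\bar\partial_{j}\alpha_{\bar i})}e^{-\Phi}\,d\lambda_{n},
\end{equation*}
and the Morrey argument (Hadamard--lemma factorization $\sum_{j}(\partial_{j}\rho)\alpha_{\bar j}=\rho h$, differentiation by $\bar\partial_{i}$, restriction to $b\Omega$, conjugation, and contraction with $\eta\alpha_{\bar i}$) converting the surviving boundary integral into the Levi--form term are all exactly as they should be. On the shortcut (valid under the extra assumption $\eta>0$, which holds in the paper's applications): the identity $\bar\partial^{*}_{\Phi-\log\eta}\alpha=\bar\partial^{*}_{\Phi}\alpha-\eta^{-1}\alpha\llcorner(\bar\partial\eta)^{\sharp}$ and the cancellation of $\int_{\Omega}\eta^{-1}\lvert\alpha\llcorner(\bar\partial\eta)^{\sharp}\rvert^{2}e^{-\Phi}\,d\lambda_{n}$ against the term coming from $-\partial_{i}\bar\partial_{j}\log\eta=-\eta^{-1}\partial_{i}\bar\partial_{j}\eta+\eta^{-2}(\partial_{i}\eta)(\bar\partial_{j}\eta)$ are correct and immediately give \eqref{guan1}. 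One editorial observation: as printed, the first term on the right-hand side of \eqref{guan1} reads $\sum_{i,j}\int_{\Omega}\eta\lvert\bar\partial_{j}\alpha_{\bar j}\rvert^{2}\,d\lambda_{n}$, with a mismatched index and a missing weight factor; your computation recovers the intended $\sum_{i,j}\int_{\Omega}\eta\lvert\bar\partial_{i}\alpha_{\bar j}\rvert^{2}e^{-\Phi}\,d\lambda_{n}$, which is what the cited identity in \cite{siu96,berndtsson} actually asserts.
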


The symbols and notations can be referred to \cite{guan-zhou13ap}. See also \cite{siu96}, \cite{siu00}, or \cite{Straube}.

\begin{Lemma}\label{l:lem7}(see \cite{berndtsson}, see also \cite{guan-zhou13ap})
Let $\Omega\subset\subset \mathbb{C}^{n}$ be a strictly pseudoconvex domain with $C^\infty$ boundary $b\Omega$ and $\Phi\in C^\infty(\overline{\Omega})$. Let $\lambda$ be a $\bar\partial$ closed smooth form of bidgree $(n,1)$ on $\overline{\Omega}$. Assume the inequality
$$|(\lambda,\alpha)_{\Omega,\Phi}|^{2}\leq C\int_{ \Omega}|\bar{\partial}^{*}_{\Phi}\alpha|^{2}\frac{e^{-\Phi}}{\mu}d\lambda_{n}<\infty,$$
where $\frac{1}{\mu}$ is an integrable positive function on $\Omega$ and
$C$ is a constant, holds for all $(n,1)$-form $\alpha\in Dom_{\Omega}(\bar\partial^*)\cap Ker(\bar\partial)\cap C^\infty_{(n,1)}(\overline \Omega)$. Then there is a solution $u$ to the
equation $\bar{\partial}u=\lambda$ such that
$$\int_{\Omega}|u|^{2}\mu e^{-\Phi}d\lambda_{n}\leq C.$$
\end{Lemma}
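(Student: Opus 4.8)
The plan is to prove Lemma~\ref{l:lem7} via the standard Hörmander–type duality argument, following the structure already used for Lemma~\ref{l:lem3}. First I would recall the Hilbert space setup: on the strictly pseudoconvex domain $\Omega$ with the weight $\Phi$, consider the densely defined, closed operator $\bar\partial\colon L^2_{(n,0)}(\Omega,\Phi)\to L^2_{(n,1)}(\Omega,\Phi)$, whose Hilbert space adjoint is $\bar\partial^*_\Phi$. Solving $\bar\partial u=\lambda$ with the prescribed $L^2(\mu e^{-\Phi})$ bound is, by the usual functional-analytic criterion, equivalent to establishing the dual estimate $|(\lambda,\alpha)_{\Omega,\Phi}|^2\leq C\int_\Omega|\bar\partial^*_\Phi\alpha|^2\,\mu^{-1}e^{-\Phi}\,d\lambda_n$ for all $\alpha$ in $\mathrm{Dom}(\bar\partial^*)\cap\mathrm{Ker}(\bar\partial)$ — but this is exactly the hypothesis of the lemma. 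So the content is to convert that dual estimate into an actual solution with the right norm.

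Next I would carry out the decomposition $\alpha=\alpha_1+\alpha_2$ with $\alpha_1\in\overline{\mathrm{Im}\,\bar\partial}$ and $\alpha_2\in\mathrm{Ker}(\bar\partial)^\perp$; since $\lambda$ is $\bar\partial$-closed, $\lambda\perp\mathrm{Ker}(\bar\partial)^\perp$, so $(\lambda,\alpha)_{\Omega,\Phi}=(\lambda,\alpha_1)_{\Omega,\Phi}$, and on $\alpha_1$ one has $\bar\partial^*_\Phi\alpha_1=\bar\partial^*_\Phi\alpha$. Using the hypothesis on $\alpha_1$ (noting $\alpha_1$ need not be smooth up to the boundary, so a density/regularization step via the fact that smooth forms in $\mathrm{Dom}(\bar\partial^*)\cap\mathrm{Ker}(\bar\partial)$ are dense is needed — this is where the $C^\infty(\overline\Omega)$ hypothesis in the statement is used), I would estimate
\[
|(\lambda,\alpha)_{\Omega,\Phi}|^2=|(\lambda,\alpha_1)_{\Omega,\Phi}|^2\leq C\int_\Omega|\bar\partial^*_\Phi\alpha_1|^2\,\frac{e^{-\Phi}}{\mu}\,d\lambda_n=C\int_\Omega|\bar\partial^*_\Phi\alpha|^2\,\frac{e^{-\Phi}}{\mu}\,d\lambda_n.
\]
This says that the linear functional $\bar\partial^*_\Phi\alpha\mapsto(\lambda,\alpha)_{\Omega,\Phi}$ is bounded by $\sqrt{C}\,\|\bar\partial^*_\Phi\alpha\|_{\mu^{-1}e^{-\Phi}}$ on the image of $\bar\partial^*_\Phi$ inside $L^2_{(n,0)}(\Omega,\mu^{-1}e^{-\Phi})$. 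By Riesz representation (after Hahn–Banach extension to the whole weighted space), there exists $u$ with $\int_\Omega|u|^2\mu e^{-\Phi}\,d\lambda_n\leq C$ such that $(\lambda,\alpha)_{\Omega,\Phi}=(\bar\partial^*_\Phi\alpha,u)$ for all admissible $\alpha$; this identity is precisely the weak formulation of $\bar\partial u=\lambda$, and elliptic regularity (or the fact that $\lambda$ is smooth) promotes $u$ to a genuine solution.

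The main obstacle I expect is the care required in the approximation/density step: the orthogonal component $\alpha_1$ is a priori only in $L^2$, not smooth up to $\overline\Omega$, whereas the hypothesis of the lemma is stated only for smooth $\alpha\in\mathrm{Dom}_\Omega(\bar\partial^*)\cap\mathrm{Ker}(\bar\partial)\cap C^\infty_{(n,1)}(\overline\Omega)$. One must invoke the density of such smooth forms (this uses strict pseudoconvexity of $\Omega$ and smoothness of $b\Omega$, the Friedrichs-type approximation in the graph norm of $\bar\partial\oplus\bar\partial^*$) together with the integrability of $\mu^{-1}$ and $\Phi\in C^\infty(\overline\Omega)$ to pass the inequality to the limit. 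The rest — the Riesz/Hahn–Banach step and recognizing the weak equation — is routine, but I would state the density lemma explicitly (or cite \cite{berndtsson,siu96}) rather than gloss over it, since that is the only genuinely delicate point.
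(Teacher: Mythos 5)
The paper offers no proof of Lemma~\ref{l:lem7}: it is quoted as a known $L^{2}$ existence result, with \cite{berndtsson} and \cite{guan-zhou13ap} cited in place of an argument, so there is no in-paper proof against which to match yours.

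Your sketch is the standard H\"ormander-type duality argument, and in outline it is correct: the hypothesis exhibits $\bar\partial^{*}_{\Phi}\alpha\mapsto(\lambda,\alpha)_{\Omega,\Phi}$ as a functional bounded by $\sqrt{C}\,\|\bar\partial^{*}_{\Phi}\alpha\|_{L^{2}(\mu^{-1}e^{-\Phi})}$; Hahn--Banach plus Riesz in $L^{2}(\mu^{-1}e^{-\Phi})$ produce a representative $v$ with $\int_{\Omega}|v|^{2}\mu^{-1}e^{-\Phi}\le C$; and $u:=v/\mu$ then satisfies $(u,\bar\partial^{*}_{\Phi}\alpha)_{\Omega,\Phi}=(\lambda,\alpha)_{\Omega,\Phi}$ for all admissible $\alpha$ with $\int_{\Omega}|u|^{2}\mu e^{-\Phi}=\int_{\Omega}|v|^{2}\mu^{-1}e^{-\Phi}\le C$, which is the weak form of $\bar\partial u=\lambda$. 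One inaccuracy worth fixing: you decompose $\alpha=\alpha_{1}+\alpha_{2}$ with $\alpha_{1}\in\overline{\mathrm{Im}\,\bar\partial}$ and $\alpha_{2}\in\mathrm{Ker}(\bar\partial)^{\perp}$; these two subspaces are orthogonal but are complementary only when $\overline{\mathrm{Im}\,\bar\partial}=\mathrm{Ker}(\bar\partial)$, i.e.\ when the $L^{2}$ $\bar\partial$-cohomology vanishes in bidegree $(n,1)$ (true here since $\Omega$ is pseudoconvex and bounded, but an extra theorem). The assumption-free version is $\alpha_{1}\in\mathrm{Ker}(\bar\partial)$, $\alpha_{2}\in\mathrm{Ker}(\bar\partial)^{\perp}\subset\mathrm{Im}(\bar\partial)^{\perp}=\mathrm{Ker}(\bar\partial^{*}_{\Phi})$, which still gives $(\lambda,\alpha)=(\lambda,\alpha_{1})$ and $\bar\partial^{*}_{\Phi}\alpha=\bar\partial^{*}_{\Phi}\alpha_{1}$, and lets you apply the hypothesis to $\alpha_{1}$. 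You are right that the only genuinely delicate point is density: the hypothesis is only asserted for $\alpha\in\mathrm{Dom}_{\Omega}(\bar\partial^{*})\cap\mathrm{Ker}(\bar\partial)\cap C^{\infty}_{(n,1)}(\overline\Omega)$, and passing to the full $L^{2}$ closure is where strict pseudoconvexity and $C^{\infty}$ boundary (Kohn/H\"ormander density in the graph norm) are used; citing this rather than redoing it is appropriate, and is exactly why the paper itself just cites \cite{berndtsson}.
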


\begin{proof}(Proof of Lemma \ref{lem:unify})

For the sake of completeness, let's recall some steps in our proof in
\cite{guan-zhou13p} (see also \cite{guan-zhou13ap,GZopen-effect}) with some slight
modifications in order to prove Lemma \ref{lem:unify}.

By Remark \ref{rem:unify},
one can assume that $D$ is strongly pseudoconvex (with smooth boundary),
and $\psi$ and $\varphi$ are plurisubharmonic on an open set $U$ containing $\bar{D}$,
and $F$ is holomorphic on $U\cap \{\psi<-t_{0}\}$ and
\begin{equation}
\label{equ:20171122e}
\int_{\{\psi<-t_{0}\}\cap D}|F|^{2}<+\infty.
\end{equation}
Then it follows from method of convolution (see e.g. \cite{demailly-book})
that there exist smooth plurisubharmonic functions $\psi_{m}$ and $\varphi_{m}$ on an open set $U\subset\bar{D}$ decreasing convergent to $\psi$ and $\varphi$ respectively,
such that $\sup_{m}\sup_{D}\psi_{m}<0$ and $\sup_{m}\sup_{D}\varphi_{m}<+\infty$.

\

\emph{Step 1: recall some Notations}

\

Let $\varepsilon\in(0,\frac{1}{8}B)$.
Let $\{v_{\varepsilon}\}_{\varepsilon\in(0,\frac{1}{8}B)}$ be a family of smooth increasing convex functions on $\mathbb{R}$,
which are continuous functions on $\mathbb{R}\cup\{-\infty\}$, such that:

 $1).$ $v_{\varepsilon}(t)=t$ for $t\geq-t_{0}-\varepsilon$, $v_{\varepsilon}(t)=constant$ for $t<-t_{0}-B+\varepsilon$;

 $2).$ $v''_{\varepsilon}(t)$ are pointwise convergent to $\frac{1}{B}\mathbb{I}_{(-t_{0}-B,-t_{0})}$, when $\varepsilon\to 0$,
 and $0\leq v''_{\varepsilon}(t)\leq \frac{2}{B}\mathbb{I}_{(-t_{0}-B+\varepsilon,-t_{0}-\varepsilon)}$ for any $t\in \mathbb{R}$;

 $3).$ $v'_{\varepsilon}(t)$ are pointwise convergent to $b(t)$ which is a continuous function on $\mathbb{R}\cup\{-\infty\}$), when $\varepsilon\to 0$, and $0\leq v'_{\varepsilon}(t)\leq1$ for any $t\in \mathbb{R}$.

One can construct the family $\{v_{\varepsilon}\}_{\varepsilon\in(0,\frac{1}{8}B)}$ by the setting
\begin{equation}
\label{equ:20140101}
\begin{split}
v_{\varepsilon}(t):=&\int_{-\infty}^{t}(\int_{-\infty}^{t_{1}}(\frac{1}{B-4\varepsilon}
\mathbb{I}_{(-t_{0}-B+2\varepsilon,-t_{0}-2\varepsilon)}*\rho_{\frac{1}{4}\varepsilon})(s)ds)dt_{1}
\\&-\int_{-\infty}^{0}(\int_{-\infty}^{t_{1}}(\frac{1}{B-4\varepsilon}\mathbb{I}_{(-t_{0}-B+2\varepsilon,
-t_{0}-2\varepsilon)}*\rho_{\frac{1}{4}\varepsilon})(s)ds)dt_{1},
\end{split}
\end{equation}
where $\rho_{\frac{1}{4}\varepsilon}$ is the kernel of convolution satisfying $supp(\rho_{\frac{1}{4}\varepsilon})\subset (-\frac{1}{4}\varepsilon,\frac{1}{4}\varepsilon)$.
Then it follows that
$$v''_{\varepsilon}(t)=\frac{1}{B-4\varepsilon}\mathbb{I}_{(-t_{0}-B+2\varepsilon,-t_{0}-2\varepsilon)}*\rho_{\frac{1}{4}\varepsilon}(t),$$
and
$$v'_{\varepsilon}(t)=\int_{-\infty}^{t}(\frac{1}{B-4\varepsilon}\mathbb{I}_{(-t_{0}-B+2\varepsilon,-t_{0}-2\varepsilon)}
*\rho_{\frac{1}{4}\varepsilon})(s)ds.$$

It suffices to consider the case that
\begin{equation}
\label{equ:20171121a}
\int_{D}\frac{1}{B}\mathbb{I}_{\{-t_{0}-B<\psi<-t_{0}\}}|F|^{2}e^{-\psi-\varphi}d\lambda_{n}<+\infty.
\end{equation}

Let $\eta=s(-v_{\varepsilon}(\psi_{m}))$ and $\phi=u(-v_{\varepsilon}(\psi_{m}))$,
where $s\in C^{\infty}((0,+\infty))$ satisfies $s\geq0$, and
$u\in C^{\infty}((0,+\infty))$, satisfies $\lim_{t\to+\infty}u(t)=0$, such that $u''s-s''>0$, and $s'-u's=1$.
It follows from $\sup_{m}\sum_{D}\psi_{m}<0$ that $\phi=u(-v_{\varepsilon}(\psi_{m}))$ are uniformly bounded
on $D$ with respect to $m$ and $\varepsilon$,
and $u(-v_{\varepsilon}(\psi))$ are uniformly bounded
on $D$ with respect to $\varepsilon$.
Let $\Phi=\phi+\varphi_{m'}$.

\

\emph{Step 2: Solving $\bar\partial-$equation with smooth polar function and smooth weight}

\

Now let $\alpha=\sum^{n}_{j=1}\alpha_{j}d\bar z^{j}\in Dom_{D}
(\bar\partial^*)\cap Ker(\bar\partial)\cap C^\infty_{(0,1)}(\overline {D})$.
By Cauchy-Schwarz inequality, it follows that
\begin{equation}
\label{equ:20131130a}
\begin{split}
2\mathrm{Re}(\bar\partial^*_\Phi\alpha,\alpha\llcorner(\bar\partial\eta)^\sharp )_{D,\Phi}
\geq
&-\int_{D}g^{-1}|\bar{\partial}^{*}_{\Phi}\alpha|^{2}e^{-\Phi}d\lambda_{n}
\\&+
\sum_{j,k=1}^{n}\int_{D}
(-g(\partial_{j} \eta)\bar\partial_{k} \eta )\alpha_{\bar{j} }\overline{{\alpha}_{\bar {k}}}e^{-\Phi}d\lambda_{n}.
\end{split}
\end{equation}

Using Lemma \ref{l:lem3} and inequality \ref{equ:20131130a},
since $s\geq0$ and $\psi_{m}$ is a plurisubharmonic function on $\overline{D}_{v}$,
we get

\begin{equation}
\label{equ:4.1}
\begin{split}
\int_{D}(\eta+g^{-1})|\bar{\partial}^{*}_{\Phi}\alpha|^{2}e^{-\Phi}d\lambda_{n}
\geq&\sum_{j,k=1}^{n}\int_{D}
(-\partial_{j}\bar{\partial}_{k}\eta+\eta\partial_{j}\bar{\partial}_{k}\Phi-g(\partial_{j} \eta)\bar\partial_{k} \eta )\alpha_{\bar{j} }\overline{{\alpha}_{\bar{k}}}e^{-\Phi}d\lambda_{n}
\\\geq&\sum_{j,k=1}^{n}\int_{D}
(-\partial_{j}\bar{\partial}_{k}\eta+\eta\partial_{j}\bar{\partial}_{k}\phi-g(\partial_{j} \eta)\bar\partial_{k} \eta )\alpha_{\bar{j} }\overline{{\alpha}_{\bar {k}}}e^{-\Phi}d\lambda_{n},
\end{split}
\end{equation}
where $g$ is a positive continuous function on $D$.
We need some calculations to determine $g$.

We have

\begin{equation}
\label{}
\begin{split}
&\partial_{j}\bar{\partial}_{k}\eta=-s'(-v_{\varepsilon}(\psi_{m}))\partial_{j}\bar{\partial}_{k}(v_{\varepsilon}(\psi_{m}))
+s''(-v_{\varepsilon}(\psi_{m}))\partial_{j}v_{\varepsilon}(\psi_{m})
\bar{\partial}_{k}v_{\varepsilon}(\psi_{m}),
\end{split}
\end{equation}

and
\begin{equation}
\label{}
\begin{split}
&\partial_{j}\bar{\partial}_{k}\phi=-u'(-v_{\varepsilon}(\psi_{m}))\partial_{j}\bar{\partial}_{k}v_{\varepsilon}(\psi_{m})
+
u''(-v_{\varepsilon}(\psi_{m}))\partial_{j}v_{\varepsilon}(\psi_{m})\bar{\partial}_{k}v_{\varepsilon}(\psi_{m})
\end{split}
\end{equation}

for any $j,k$ ($1\leq j,k\leq n$).

We have

\begin{equation}
\label{}
\begin{split}
&\sum_{1\leq j,k\leq n}(-\partial_{j}\bar{\partial}_{k}\eta+\eta\partial_{j}\bar{\partial}_{k}\phi-g(\partial_{j} \eta)
\bar\partial_{k} \eta )\alpha_{\bar{j} }\overline{{\alpha}_{\bar{k}}}
\\=&(s'-su')\sum_{1\leq j,k\leq n}\partial_{j}\bar{\partial}_{k}v_{\varepsilon}(\psi_{m})\alpha_{\bar{j} }\overline{{\alpha}_{\bar{k}}}
\\+&((u''s-s'')-gs'^{2})\sum_{1\leq j,k\leq n}\partial_{j}
(-v_{\varepsilon}(\psi_{m}))\bar{\partial}_{k}(-v_{\varepsilon}(\psi_{m}))\alpha_{\bar{j} }\overline{{\alpha}_{\bar{k}}}
\\=&(s'-su')\sum_{1\leq j,k\leq n}(v'_{\varepsilon}(\psi_{m})\partial_{j}\bar{\partial}_{k}\psi_{m}+v''_{\varepsilon}(\psi_{m})
\partial_{j}(\psi_{m})\bar{\partial}_{k}(\psi_{m}))\alpha_{\bar{j} }\overline{{\alpha}_{\bar{k}}}
\\+&((u''s-s'')-gs'^{2})\sum_{1\leq j,k\leq n}\partial_{j}
(-v_{\varepsilon}(\psi_{m}))\bar{\partial}_{k}(-v_{\varepsilon}(\psi_{m}))\alpha_{\bar{j} }\overline{{\alpha}_{\bar{k}}}.
\end{split}
\end{equation}
We omit composite item $-v_{\varepsilon}(\psi_{m})$ after $s'-su'$ and $(u''s-s'')-gs'^{2}$ in the above equalities.

Let $g=\frac{u''s-s''}{s'^{2}}(-v_{\varepsilon}(\psi_{m}))$.
It follows that $\eta+g^{-1}=(s+\frac{s'^{2}}{u''s-s''})(-v_{\varepsilon}(\psi_{m}))$.

Because of $v'_{\varepsilon}\geq 0$  and $s'-su'=1$, using inequalities \ref{equ:4.1}, we have
\begin{equation}
\label{equ:3.1}
\begin{split}
\int_{D}(\eta+g^{-1})|\bar{\partial}^{*}_{\Phi}\alpha|^{2}e^{-\Phi}d\lambda_{n}
\geq\int_{D}(v''_{\varepsilon}\circ{\psi_{m}})
\big|\alpha\llcorner(\bar \partial \psi_{m})^\sharp\big|^2e^{-\Phi}d\lambda_{n}.
\end{split}
\end{equation}

As $F$ is holomorphic on $\{\psi<-t_{0}\}\supset\supset Supp(v'_{\varepsilon}(\psi_{m}))$,
then $\lambda:=\bar{\partial}[(1-v'_{\varepsilon}(\psi_{m})){F}]$
is well-defined and smooth on $D$.
By the definition of contraction, Cauchy-Schwarz inequality and inequality \ref{equ:3.1},
it follows that
\begin{equation}
\begin{split}
|(\lambda,\alpha)_{D,\Phi}|^{2}=
&|(v''_{\varepsilon}(\psi_{m})\bar\partial\psi_{m} F,\alpha)_{D,\Phi}|^{2}
\\=&|(v''_{\varepsilon}(\psi_{m})F,\alpha\llcorner(\bar\partial\psi_{m})^\sharp\big)_{D,\Phi}|^{2}
\\\leq&\int_{D}
v''_{\varepsilon}(\psi_{m})| F|^2e^{-\Phi}d\lambda_{n}\int_{D}v''_{\varepsilon}(\psi_{m})
\big|\alpha\llcorner(\bar\partial\psi_{m})^\sharp\big|^2e^{-\Phi}d\lambda_{n}
\\\leq&
(\int_{D}
v''_{\varepsilon}(\psi_{m})| F|^2e^{-\Phi}d\lambda_{n})
(\int_{ D}(\eta+g^{-1})|\bar{\partial}^{*}_{\Phi}\alpha|^{2}e^{-\Phi}d\lambda_{n}).
\end{split}
\end{equation}

Let $\mu:=(\eta+g^{-1})^{-1}$. Using Lemma \ref{l:lem7},
we have locally $L^{1}$ function $u_{m,m',\varepsilon}$ on $D$ such that $\bar{\partial}u_{m,m',\varepsilon}=\lambda$,
and
\begin{equation}
 \label{equ:3.2}
 \begin{split}
 &\int_{ D}|u_{m,m',\varepsilon}|^{2}(\eta+g^{-1})^{-1} e^{-\Phi}d\lambda_{n}
  \leq\int_{D}(v''_{\varepsilon}(\psi_{m}))| F|^2e^{-\Phi}d\lambda_{n}.
  \end{split}
\end{equation}

Assume that we can choose $\eta$ and $\phi$ such that $e^{v_{\varepsilon}\circ\psi_{m}}e^{\phi}=(\eta+g^{-1})^{-1}$.
Then inequality \ref{equ:3.2} becomes
\begin{equation}
 \label{equ:20171122b}
 \begin{split}
 &\int_{ D}|u_{m,m',\varepsilon}|^{2}e^{v_{\varepsilon}(\psi_{m})-\varphi_{m'}}d\lambda_{n}
  \leq\int_{D}v''_{\varepsilon}(\psi_{m})| F|^2e^{-\phi-\varphi_{m'}}d\lambda_{n}.
  \end{split}
\end{equation}

Let $F_{m,m',\varepsilon}:=-u_{m,m',\varepsilon}+(1-v'_{\varepsilon}(\psi_{m})){F}$.
Then inequality \ref{equ:20171122b} becomes
\begin{equation}
 \label{equ:20171122c}
 \begin{split}
 &\int_{ D}|F_{m,m',\varepsilon}-(1-v'_{\varepsilon}(\psi_{m})){F}|^{2}e^{v_{\varepsilon}(\psi_{m})-\varphi_{m'}}d\lambda_{n}
  \leq\int_{D}(v''_{\varepsilon}(\psi_{m}))| F|^2e^{-\phi-\varphi_{m'}}d\lambda_{n}.
  \end{split}
\end{equation}

\

\emph{Step 3: Singular polar function and smooth weight}

\

As $\sup_{m,\varepsilon}|\phi|=\sup_{m,\varepsilon}|u(-v_{\varepsilon}(\psi_{m}))|<+\infty$ and $\varphi_{m'}$ is continuous on $\bar{D}$,
then $\sup_{m,\varepsilon}e^{-\phi-\varphi_{m'}}<+\infty$.
Note that
$$v''_{\varepsilon}(\psi_{m})| F|^2e^{-\phi-\varphi_{m'}}\leq\frac{2}{B}\mathbb{I}_{\{\psi<-t_{0}\}}| F|^{2}\sup_{m,\varepsilon}e^{-\phi-\varphi_{m'}}$$
on $D$,
then it follows from inequality \ref{equ:20171122e} and the dominated convergence theorem that
\begin{equation}
\label{equ:20171122f}
 \lim_{m\to+\infty}\int_{D}v''_{\varepsilon}(\psi_{m})| F|^2e^{-\phi-\varphi_{m'}}d\lambda_{n}=
 \int_{D}v''_{\varepsilon}(\psi)| F|^2e^{-u(-v_{\varepsilon}(\psi))-\varphi_{m'}}d\lambda_{n}
\end{equation}

Note that $\inf_{m}\inf_{D}e^{v_{\varepsilon}(\psi_{m})-\varphi_{m'}}>0$,
then it follows from inequality \ref{equ:20171122c} and \ref{equ:20171122f}
that $\sup_{m}\int_{ D}|F_{m,m',\varepsilon}-(1-v'_{\varepsilon}(\psi_{m})){F}|^{2}<+\infty$.
Note that
\begin{equation}
\label{equ:20171122g}
|(1-v'_{\varepsilon}(\psi_{m}))F|\leq |\mathbb{I}_{\{\psi<-t_{0}\}}F|,
\end{equation}
then it follows from inequality \ref{equ:20171122e}
that $\sup_{m}\int_{ D}|F_{m,m',\varepsilon}|^{2}<+\infty$,
which implies that there exists a subsequence of $\{F_{m,m',\varepsilon}\}_{m}$
(also denoted by $F_{m,m',\varepsilon}$) compactly convergent to a holomorphic $F_{m',\varepsilon}$ on $D$.

Note that $v_{\varepsilon}(\psi_{m})-\varphi_{m'}$ are uniformly bounded on $D$ with respect to $m$,
then it follows from
$|F_{m,m',\varepsilon}-(1-v'_{\varepsilon}(\psi_{m})){F}|^{2}\leq
 2(|F_{m,m',\varepsilon}|^{2}+ |(1-v'_{\varepsilon}(\psi_{m})){F}|^{2}
\leq  2(|F_{m,m',\varepsilon}|^{2}+  |\mathbb{I}_{\{\psi<-t_{0}\}}F^{2}|)$
and the dominated convergence theorem that
\begin{equation}
 \label{equ:20171122d}
 \begin{split}
 \lim_{m\to+\infty}&\int_{K}|F_{m,m',\varepsilon}-(1-v'_{\varepsilon}(\psi_{m})){F}|^{2}e^{v_{\varepsilon}(\psi_{m})-\varphi_{m'}}d\lambda_{n}
  \\=&\int_{K}|F_{m',\varepsilon}-(1-v'_{\varepsilon}(\psi)){F}|^{2}e^{v_{\varepsilon}(\psi)-\varphi_{m'}}d\lambda_{n}
  \end{split}
\end{equation}
holds for any compact subset $K$ on $D$.
Combining with inequality \ref{equ:20171122c} and \ref{equ:20171122f},
one can obtain that
\begin{equation}
 \label{equ:20171122h}
\int_{K}|F_{m',\varepsilon}-(1-v'_{\varepsilon}(\psi)){F}|^{2}e^{v_{\varepsilon}(\psi)-\varphi_{m'}}d\lambda_{n}
\leq
\int_{D}v''_{\varepsilon}(\psi)| F|^2e^{-u(-v_{\varepsilon}(\psi))-\varphi_{m'}}d\lambda_{n},
\end{equation}
which implies
\begin{equation}
 \label{equ:20171122i}
\int_{D}|F_{m',\varepsilon}-(1-v'_{\varepsilon}(\psi)){F}|^{2}e^{v_{\varepsilon}(\psi)-\varphi_{m'}}d\lambda_{n}
\leq
\int_{D}v''_{\varepsilon}(\psi)| F|^2e^{-u(-v_{\varepsilon}(\psi))-\varphi_{m'}}d\lambda_{n},
\end{equation}

\

\emph{Step 4: Nonsmooth cut-off function}

\

Note that
$\sup_{\varepsilon}\sup_{D}e^{-u(-v_{\varepsilon}(\psi))-\varphi_{m'}}<+\infty,$
and
$$v''_{\varepsilon}(\psi)| F|^2e^{-u(-v_{\varepsilon}(\psi))-\varphi_{m'}}\leq
\frac{2}{B}\mathbb{I}_{\{-t_{0}-B<\psi<-t_{0}\}}| F|^2\sup_{\varepsilon}\sup_{D}e^{-u(-v_{\varepsilon}(\psi))-\varphi_{m'}},$$
then it follows from inequality \ref{equ:20171122e} and the dominated convergence theorem that
\begin{equation}
\label{equ:20171122j}
\begin{split}
&\lim_{\varepsilon\to0}\int_{D}v''_{\varepsilon}(\psi)| F|^2e^{-u(-v_{\varepsilon}(\psi))-\varphi_{m'}}d\lambda_{n}
\\=&\int_{D}\frac{1}{B}\mathbb{I}_{\{-t_{0}-B<\psi<-t_{0}\}}|F|^2e^{-u(-v(\psi))-\varphi_{m'}}d\lambda_{n}
\\\leq&(\sup_{D}e^{-u(-v(\psi))})\int_{D}\frac{1}{B}\mathbb{I}_{\{-t_{0}-B<\psi<-t_{0}\}}|F|^2e^{-\varphi_{m'}}<+\infty.
\end{split}
\end{equation}

Note that $\inf_{\varepsilon}\inf_{D}e^{v_{\varepsilon}(\psi)-\varphi_{m'}}>0$,
then it follows from inequality \ref{equ:20171122i} and \ref{equ:20171122j} that
$\sup_{\varepsilon}\int_{D}|F_{m',\varepsilon}-(1-v'_{\varepsilon}(\psi)){F}|^{2}<+\infty.$
Combining with
\begin{equation}
\label{equ:20171122k}
\sup_{\varepsilon}\int_{D}|(1-v'_{\varepsilon}(\psi)){F}|^{2}\leq\int_{D}\mathbb{I}_{\{\psi<-t_{0}\}}|F^{2}|<+\infty,
\end{equation}
one can obtain that $\sup_{\varepsilon}\int_{D}|F_{m',\varepsilon}|^{2}<+\infty$,
which implies that
there exists a subsequence of $\{F_{m',\varepsilon}\}_{\varepsilon\to0}$ (also denoted by $\{F_{m',\varepsilon}\}_{\varepsilon\to0}$)
compactly convergent to a holomorphic function on $D$ denoted by $F_{m'}$.

Note that $\sup_{\varepsilon}\sup_{D}e^{v_{\varepsilon}(\psi)-\varphi_{m'}}<+\infty$ and
$|F_{m',\varepsilon}-(1-v'_{\varepsilon}(\psi)){F}|^{2}\leq 2(|F_{m',\varepsilon}|^{2}+|\mathbb{I}_{\{\psi<-t_{0}\}}F|^{2})$,
then it follows from inequality \ref{equ:20171122k} and the dominated convergence theorem on any given $K\subset\subset D$
$($with dominant function $2(\sup_{\varepsilon}\sup_{K}(|F_{m',\varepsilon}|^{2})+\mathbb{I}_{\{\psi<-t_{0}\}}|F|^{2})\sup_{\varepsilon}\sup_{D}e^{v_{\varepsilon}(\psi)-\varphi_{m'}})$
that
\begin{equation}
\label{equ:20171122l}
\begin{split}
&\lim_{\varepsilon\to0}\int_{K}|F_{m',\varepsilon}-(1-v'_{\varepsilon}(\psi)){F}|^{2}e^{v_{\varepsilon}(\psi)-\varphi_{m'}}d\lambda_{n}
\\=&\int_{K}|F_{m'}-(1-b(\psi)){F}|^{2}e^{v(\psi)-\varphi_{m'}}d\lambda_{n}.
\end{split}
\end{equation}
Combining with inequality \ref{equ:20171122j} and \ref{equ:20171122i}, one can obtain that
\begin{equation}
\label{equ:20171122m}
\begin{split}
&\int_{K}|F_{m'}-(1-b(\psi)){F}|^{2}e^{v(\psi)-\varphi_{m'}}d\lambda_{n}
\\\leq&(\sup_{D}e^{-u(-v(\psi))})\int_{D}\frac{1}{B}\mathbb{I}_{\{-t_{0}-B<\psi<-t_{0}\}}|F|^2e^{-\varphi_{m'}}
\end{split}
\end{equation}
which implies
\begin{equation}
\label{equ:20171122n}
\begin{split}
&\int_{D}|F_{m'}-(1-b(\psi)){F}|^{2}e^{v(\psi)-\varphi_{m'}}d\lambda_{n}
\\\leq&(\sup_{D}e^{-u(-v(\psi))})\int_{D}\frac{1}{B}\mathbb{I}_{\{-t_{0}-B<\psi<-t_{0}\}}|F|^2e^{-\varphi_{m'}}.
\end{split}
\end{equation}

\

\emph{Step 5: Singular weight}

\

Note that
\begin{equation}
\label{equ:20171122o}
\int_{D}\frac{1}{B}\mathbb{I}_{\{-t_{0}-B<\psi<-t_{0}\}}|F|^2e^{-\varphi_{m'}}\leq\int_{D}\frac{1}{B}\mathbb{I}_{\{-t_{0}-B<\psi<-t_{0}\}}|F|^{2}e^{-\varphi}<+\infty,
\end{equation}
and $\sup_{D}e^{-u(-v(\psi))}<+\infty$,
then it from \ref{equ:20171122n} that
$$\sup_{m'}\int_{D}|F_{m'}-(1-b(\psi)){F}|^{2}e^{v(\psi)-\varphi_{m'}}d\lambda_{n}<+\infty.$$
Combining with $\inf_{m'}\inf_{D}e^{v(\psi)-\varphi_{m'}}>0$,
one can obtain that
$\sup_{m'}\int_{D}|F_{m'}-(1-b(\psi)){F}|^{2}d\lambda_{n}<+\infty$.
Note that
\begin{equation}
\label{equ:20171122p}
\int_{D}|(1-b(\psi)){F}|^{2}d\lambda_{n}\leq\int_{D}|\mathbb{I}_{\{\psi<-t_{0}\}}F|^{2}d\lambda_{n} <+\infty.
\end{equation}
Then $\sup_{m'}\int_{D}|F_{m'}|^{2}d\lambda_{n}<+\infty$,
which implies that there exists a compactly convergent subsequence of $\{F_{m'}\}$ denoted by $\{F_{m''}\}$,
which is convergent a holomorphic function $\tilde{F}$ on $D$.

Note that $\sup_{m'}\sup_{D}e^{v(\psi)-\varphi_{m'}}<+\infty$,
then it follows from inequality \ref{equ:20171122p} and the
dominated convergence theorem on any given compact subset $K$ of $D$ $($with dominant function $2[\sup_{m''}\sup_{K}(|F_{m''}|^{2})+\mathbb{I}_{\{\psi<-t_{0}\}}|F|^{2}]\sup_{D}e^{v(\psi)-\varphi_{m'}}$ $)$ that
\begin{equation}
\label{equ:20171122q}
\begin{split}
&\lim_{m''\to+\infty}\int_{K}|F_{m''}-(1-b(\psi)){F}|^{2}e^{v(\psi)-\varphi_{m'}}d\lambda_{n}
\\=&\int_{K}|\tilde{F}-(1-b(\psi)){F}|^{2}e^{v(\psi)-\varphi_{m'}}d\lambda_{n}.
\end{split}
\end{equation}
Note that for any $m''\geq m'$, $\varphi_{m'}\leq\varphi_{m''}$ holds,
then it follows from inequality \ref{equ:20171122n} and \ref{equ:20171122o}
that
\begin{equation}
\label{equ:20171122r}
\begin{split}
&\lim_{m''\to+\infty}\int_{K}|F_{m''}-(1-b(\psi)){F}|^{2}e^{v(\psi)-\varphi_{m'}}d\lambda_{n}
\\\leq&
\limsup_{m''\to+\infty}\int_{K}|F_{m''}-(1-b(\psi)){F}|^{2}e^{v(\psi)-\varphi_{m''}}d\lambda_{n}
\\\leq&
\limsup_{m''\to+\infty}(\sup_{D}e^{-u(-v(\psi))})\int_{D}\frac{1}{B}\mathbb{I}_{\{-t_{0}-B<\psi<-t_{0}\}}|F|^2e^{-\varphi_{m''}}
\\\leq&
(\sup_{D}e^{-u(-v(\psi))})C<+\infty.
\end{split}
\end{equation}
Combining with equality \ref{equ:20171122q},
one can obtain that
$$\int_{K}|\tilde{F}-(1-b(\psi)){F}|^{2}e^{v(\psi)-\varphi_{m'}}d\lambda_{n}\leq(\sup_{D}e^{-u(-v(\psi))})C,$$
for any compact subset of $D$,
which implies
$$\int_{D}|\tilde{F}-(1-b(\psi)){F}|^{2}e^{v(\psi)-\varphi_{m'}}d\lambda_{n}\leq(\sup_{D}e^{-u(-v(\psi))})C.$$
When $m'\to+\infty$,
it follows from Levi's Theorem that
\begin{equation}
\label{equ:20171122s}
\begin{split}
\int_{D}|\tilde{F}-(1-b(\psi)){F}|^{2}e^{v(\psi)-\varphi}d\lambda_{n}\leq(\sup_{D}e^{-u(-v(\psi))})C.
\end{split}
\end{equation}

\

\emph{Step 6: ODE system}

\

It suffices to find $\eta$ and $\phi$ such that
$(\eta+g^{-1})=e^{-\psi_{m}}e^{-\phi}$ on $D$.
As $\eta=s(-v_{\varepsilon}(\psi_{m}))$ and $\phi=u(-v_{\varepsilon}(\psi_{m}))$,
we have $(\eta+g^{-1}) e^{v_{\varepsilon}(\psi_{m})}e^{\phi}=(s+\frac{s'^{2}}{u''s-s''})e^{-t}e^{u}\circ(-v_{\varepsilon}(\psi_{m}))$.

Summarizing the above discussion about $s$ and $u$, we are naturally led to a
system of ODEs (see \cite{guan-zhou12,guan-zhou13p,guan-zhou13ap,GZopen-effect}):
\begin{equation}
\label{GZ}
\begin{split}
&1).\,\,(s+\frac{s'^{2}}{u''s-s''})e^{u-t}=1, \\
&2).\,\,s'-su'=1,
\end{split}
\end{equation}
where $t\in[0,+\infty)$.

It is not hard to solve the ODE system \ref{GZ} and get $u=-\log(1-e^{-t})$ and
$s=\frac{t}{1-e^{-t}}-1$.
It follows that $s\in C^{\infty}((0,+\infty))$ satisfies $s\geq0$, $\lim_{t\to+\infty}u(t)=0$ and
$u\in C^{\infty}((0,+\infty))$ satisfies $u''s-s''>0$.

As $u=-\log(1-e^{-t})$ is decreasing with respect to $t$,
then it follows from $0\geq v(t)\geq\max\{t,-t_{0}-B_{0}\}\geq -t_{0}-B_{0}$ for any $t\leq0$
that
\begin{equation}
\begin{split}
\sup_{D}e^{-u(-v(\psi))}
\leq\sup_{t\in(0,t_{0}+B]}e^{-u(t)}
=\sup_{t\in(0,t_{0}+B]}(1-e^{-t})=1-e^{-(t_{0}+B)},
\end{split}
\end{equation}
therefore we are done.
Thus we prove Lemma \ref{lem:unify}.

\end{proof}

\vspace{.1in} {\em Acknowledgements}. The author would like to
thank Professor Xiangyu Zhou for helpful discussions and encouragements.
The author would also like to
thank the hospitality of Beijing International Center for Mathematical Research.

\bibliographystyle{references}
\bibliography{xbib}

\end{document}